\newtheorem{Thm}{Theorem}[section]
\newtheorem{Lem}[Thm]{Lemma}
\newtheorem{Cor}[Thm]{Corollary}
\newtheorem{Pro}[Thm]{Proposition}
\def\blfootnote{\xdef\@thefnmark{}\@footnotetext}
\theoremstyle{definition}
\newtheorem{Def}[Thm]{Definition}
\newtheorem{Eg}[Thm]{Example}
\theoremstyle{remark}
\newtheorem{Rem}[Thm]{\bf{Remark}}
\newcommand{\ConvD}{\overset{d}{\rightarrow}}
\newcommand{\ConvFDD}{\overset{f.d.d.}{\longrightarrow}}
\newcommand{\Cov}{\mathrm{Cov}}
\newcommand{\Var}{\mathrm{Var}}
\newcommand{\E}{\mathbb{E}}
\newcommand{\mathbd}{\boldsymbol}
\title{Convergence of long-memory discrete $k$-th order Volterra processes}
\author{
        Shuyang Bai ~~
        Murad S. Taqqu
}
\begin{document}
\maketitle
\begin{abstract}
We obtain  limit theorems for a class of nonlinear discrete-time processes $X(n)$ called the $k$-th order Volterra processes of order $k$. These are moving average $k$-th order polynomial forms:
\[
X(n)=\sum_{0<i_1,\ldots,i_k<\infty}a(i_1,\ldots,i_k)\epsilon_{n-i_1}\ldots\epsilon_{n-i_k},
\]
where $\{\epsilon_i\}$ is i.i.d.\ with $\E \epsilon_i=0$, $\E \epsilon_i^2=1$,  where $a(\cdot)$ is a nonrandom coefficient, and where the diagonals are included in the summation.  We specify conditions for $X(n)$ to be well-defined in $L^2(\Omega)$, and focus on central and non-central limit theorems. We show that normalized partial sums of centered $X(n)$ obey the central limit theorem if $a(\cdot)$ decays fast enough so that $X(n)$ has short memory. We prove a non-central limit theorem if, on the other hand, $a(\cdot)$ is asymptotically some slowly decaying homogeneous function  so that $X(n)$ has long memory. In the non-central case the  limit is a linear combination of Hermite-type processes of different orders. This linear combination can  be expressed as a centered multiple Wiener-Stratonovich integral.
\end{abstract}
\blfootnote{
\begin{flushleft}
\textbf{Key words} Long memory; Long-range dependence; Volterra process; Wiener chaos; Wiener; Stratonovich; Limit theorems
\end{flushleft}
\textbf{2010 AMS Classification:} 60G18, 60F05\\
}
\section{Introduction}
A common assumption when analyzing a  stationary time series $\{X(n),n\in \mathbb{Z}\}$, is that $\{X(n)\}$ is a causal linear process, that is,
\begin{equation}\label{eq:linear}
X_1(n)=\sum_{i=1}^\infty a_{i}\epsilon_{n-i},
\end{equation}
where $\{\epsilon_i\}$ is a sequence of i.i.d.\ random variables with mean $0$ and variance $1$. This assumption is based on the Wold's decomposition, which states that if $\{X(n)\}$ is stationary with mean $0$ and finite second moment, and is also purely non-deterministic, then the representation (\ref{eq:linear}) always holds with $\{\epsilon_{i}\}$  a sequence of uncorrelated random variables (\citet{brockwell:1991:time} \S 5.7). The  independence assumption of $\{\epsilon_i\}$  in (\ref{eq:linear}) obliterates the higher-order dependence structure. In some applications, linear processes provide  good approximations, while in  others, not, as in the case of the ARCH model for  volatility data.

The Volterra process extends linear process by incorporating non-linearity. A (causal) Volterra process with highest order $K$ is of the form
\begin{equation}\label{eq:volterra}
X_K(n)=\sum_{k=1}^K\sum_{0<i_1,\ldots,i_k<\infty} a_k(i_1,\ldots,i_k) \epsilon_{n-i_1}\ldots \epsilon_{n-i_k}.
\end{equation}

To understand the importance of (\ref{eq:volterra}), suppose that the stationary process is  $X(n)=A(\epsilon_{n-1},\epsilon_{n-2},\ldots)$ for some regular function $A$. Then (\ref{eq:volterra}) can be heuristically regarded as its $K$-th order Taylor series approximation. The homogeneous polynomial-form expansion in (\ref{eq:volterra}) and its continuous-time counterpart where the sums are replaced with integrals,  was originally proposed by Vito Volterra (see \citet{volterra:2005:theory}) for modeling deterministic nonlinear systems, and later extended by Norbert Wiener (see \citet{wiener:1966:nonlinear}) to random systems, which eventually lead to the well-developed theory of Wiener chaos (see, e.g., \citet{cameron:1947:orthogonal}, \citet{ito:1951:multiple}, and the recent survey \citet{peccati:taqqu:2011:wiener}). In the context of approximation of stationary processes, \citet{nisio:1960:polynomial} shows that any stationary process can be approximated in the sense of finite-dimensional distributions by a Volterra process with $\epsilon_i$'s Gaussian. Some nonlinear time series models admit  Volterra expansions (\ref{eq:volterra}) with $K=\infty$. For example,  the LARCH($\infty$) model
\[
X(n)=a+\sum_{i=1}^\infty b_i Y(n-i), \quad Y(n)=X(n)\epsilon_n,
\]
under suitable conditions admits the following Volterra expansion (see, e.g., Theorem 2.1 of \citet{giraitis:leipus:2004:larch}):
\[
X(n)=a\left(1+\sum_{k=1}^\infty \sum_{0<i_1,\ldots,i_k<\infty}b_{i_1}b_{i_2-i_1}b_{i_3-i_2}\ldots b_{i_{k}-i_{k-1}}1_{\{i_1<i_2<\ldots<i_{k}\}} \epsilon_{n-i_1}{\epsilon_{n-i_2}}\ldots \epsilon_{n-i_k}\right).
\]

We are interested  here in stationary processes that have \emph{long memory}, or \emph{long-range dependence}. A common choice is a linear process in (\ref{eq:linear}) with $a_1(n)\sim c n^{d-1}$ as $n\rightarrow\infty$, where $d\in (0,1/2)$ is the \emph{memory parameter}, and $c>0$ is some constant.  This is the case, for instance, when $X(n)$ is the stationary solution of the fractional difference equation
\[
\Delta^d X(n) =\epsilon_{n-1},
\]
where $\Delta=I-B$ is the difference operator with $I$ being identity operator and $B$ being the backward shift operator, and $(I-B)^d$ is understood as a binomial series (see, e.g., \citet{giraitis:koul:surgailis:2009:large} Chapter 7.2). We note that such long-memory linear processes have an autocovariance decaying like $n^{2d-1}$ as $n\rightarrow\infty$, and a spectral density exploding  at the origin as $|\lambda|^{-2d}$ as $|\lambda|\rightarrow 0$.

If one wants to consider a nonlinear long memory model, a natural choice is to have a Volterra process (\ref{eq:volterra}) with  coefficients $a_k(i_1,\ldots,i_k)$ decaying slowly as $i_1,\ldots,i_k$ tends to infinity, so that the autocovariance has a slow hyperbolic decay.
The major goal in this paper is to study the limit  of normalized partial sum of some long-memory Volterra processes. When $X(n)$ is a long-memory linear process, that is, a long-memory Volterra process with $K=1$, then the limit, as is well-known, is fractional Brownian motion (\citet{davydov:1970:invariance}). When $X(n)$ is polynomial of a long memory linear processes, that is, when $a_k(i_1,\ldots,i_k)=c_k i_1^{d-1}\ldots i_k^{d-1}$ in (\ref{eq:volterra}) for some constant $c_k$, and $d$ is large enough, then the limit is a Hermite process of a fixed order (\citet{surgailis:1982:zones}, \citet{avram:1987:noncentral}). Such limit theorems involving non-Brownian motion limits are often called \emph{non-central limit theorems}.

In this paper, we  focus on  Volterra processes  of a single order $k\ge 1$:
\begin{equation}\label{eq:X(n)}
X(n)=\sum_{0<i_1,\ldots,i_k<\infty} a(i_1,\ldots,i_k) \epsilon_{n-i_1}\ldots \epsilon_{n-i_k},
\end{equation}
which avoids possible cancellations between terms of different orders.
Note that the multiple sum (\ref{eq:X(n)}) \emph{includes} diagonals, that is, it allows $i_1,\ldots,i_k$ to be equal to each other.  In the literature, one often considers multiple sums of the type (\ref{eq:X(n)}) where summation over the diagonals is \emph{excluded}, which greatly simplifies the theory. Although the exclusion of the diagonals is a typical theoretical assumption, it is, from a practical perspective, an artificial one. Expression (\ref{eq:X(n)}) is the natural one since it includes all the terms.

To obtain a non-central limit theorem for (\ref{eq:X(n)}),
we assume that the coefficient $a(i_1,\ldots,i_k)$ behaves asymptotically as a homogeneous function $g$ on $\mathbb{R}_+^k$ which is bounded excluding a neighborhood of the origin.  We shall show that in this case, the limit of a normalized sum of centered $X(n)$ is a linear combination of Hermite-type processes of different orders.
These Hermite-type processes that appear in the limit were first introduced in \citet{mori:toshio:1986:law}, and were called in \citet{bai:taqqu:2013:generalized}   \emph{generalized Hermite processes}. They live in  Wiener chaos, and extend in a natural way the usual Hermite processes considered in the literature, e.g., \citet{dobrushin:major:1979:non} and \citet{taqqu:1979:convergence}.

The limit, which is a linear combination involving different orders of multiple Wiener-It\^o integrals, can be re-expressed as a single  \emph{centered multiple Wiener-Stratonovich  integral} with the zeroth-order term excluded. These integrals were introduced by \citet{hu:meyer:1988:integrales}. Loosely speaking, in contrast to the usual Wiener-It\^o integrals, the multiple Wiener-Stratonovich integrals include  diagonals, and intuitively they are the continuous counterpart  of the multiple sums in (\ref{eq:X(n)}) which, as was noted, do include diagonals.

The paper is organized as follows. In Section \ref{Sec:GHK}, we introduce the generalized Hermite processes which  appear in the formulation of the non-central limit theorem. In Section \ref{Sec:inf homo sum}, we provide conditions for the polynomial form  (\ref{eq:X(n)}) to be well-defined in $L^2(\Omega)$. In Section \ref{Sec:LRD Volterra}, we introduce the class of long-memory Volterra processes $X(n)$ of interest in the non-central limit theorem. In Section \ref{Sec:CLT}, we establish central limit theorems when $a(\cdot)$ in (\ref{eq:X(n)}) decays fast enough so that $X(n)$ has short memory. In Section \ref{Sec:NCLT}, we state a  non-central limit theorem for processes $X(n)$ in (\ref{eq:X(n)}). Before launching into the article, the reader may want to have a look at  this result, formulated as  Theorem \ref{Thm:Basic NCLT}, and also at the  illustrative Example \ref{Eg}. The connection between the limit and multiple Wiener-Stratonovich integrals is indicated    in Section \ref{Sec:Stra}.  Section \ref{Sec:Extension hyper} contains an extended hypercontractivity formula.

\section{Generalized Hermite processes and  kernels}\label{Sec:GHK}
We introduce here the kernels which will be used to define both the coefficient $a(\cdot)$ in (\ref{eq:X(n)}), and the processes that will appear in the non-central limit.

First, some notation which will be used throughout the paper.  Let $\mathbf{x}=(x_1,\ldots,x_k)\in \mathbb{R}^k$,  $\mathbf{i}=(i_1,\ldots,i_k)\in \mathbb{Z}^k$, $\mathbf{0}=(0,\ldots,0)$,  $\mathbf{1}=(1,\ldots,1)$, and let $\mathbf{1}_r$ denote the vector made of $r$ $1$'s. If $x\in \mathbb{R}$, then $[x]=\sup \{n\in \mathbb{Z},n\le x\}$, and $[\mathbf{x}]=([x_1],\ldots,[x_k])$. We write $\mathbf{x}> \mathbf{y}$ if $x_j> y_j, j=1,\ldots,k$, and use the following standard notations: $\|\cdot\|$ denotes a norm in some suitable space, $\mathrm{1}_A(\cdot)$ is the indicator function of a set $A$, $|A|$ denotes the cardinality of set $A$, and if $g_1$ and  $g_2$ are two functions on $\mathbb{R}^{k_1}$ and $\mathbb{R}^{k_2}$ respectively, then $g_1\otimes g_2$ defines a scalar function on $\mathbb{R}^{k_1+k_2}$ as $(\mathbf{x}_1,\mathbf{x}_2)\rightarrow g_1(\mathbf{x}_1)g_2(\mathbf{x}_2)$.

The following class of functions was introduced in \citet{bai:taqqu:2013:generalized}:
\begin{Def}\label{Def:GHK}
A \emph{generalized Hermite kernel} (GHK) $g$ is a nonzero measurable function defined on $\mathbb{R}_+^k$  satisfying:
\begin{enumerate}
\item $g(\lambda \mathbf{x})=\lambda^\alpha g(\mathbf{x})$, $\forall \lambda>0$,  $\alpha\in(-\frac{k+1}{2},-\frac{k}{2})$;\label{ass:homo}
\item $\int_{\mathbb{R}_+^k}|g(\mathbf{x})g(\mathbf{1}+\mathbf{x})| d\mathbf{x}  <\infty$.\label{ass:int2}
\end{enumerate}
\end{Def}
\begin{Rem}\label{Rem:int|g(s1-x)|ds<inf}
As  shown in Theorem 3.5 and Remark 3.6 in \citet{bai:taqqu:2013:generalized}, if $g(\cdot)$ is a GHK on $\mathbb{R}^k$, then for every $t>0$,  
$$\int_{0}^t |g(s\mathbf{1}-\mathbf{y})|\mathrm{1}_{\{s\mathbf{1}>\mathbf{y}\}}ds <\infty
$$
 for a.e.\ $\mathbf{y}\in \mathbb{R}^k$. Furthermore,
$$h_t(\mathbf{y})=\int_{0}^t g(s\mathbf{1}-\mathbf{y})\mathrm{1}_{\{s\mathbf{1}>\mathbf{y}\}}ds$$
 is a.e.\ defined, and $h_t\in L^2(\mathbb{R}^k)$. In addition, if $g$ is nonzero, then $\int_{\mathbb{R}_+^k}g(\mathbf{1}+\mathbf{x})g(\mathbf{x})d\mathbf{x}>0$.
\end{Rem}

These functions $g$ were used in \citet{bai:taqqu:2013:generalized} as defining kernels for a class of stochastic processes  called \emph{generalized Hermite processes}.
\begin{Def}\label{Def:GHK proc}
The \emph{generalized Hermite processes} are defined through the following multiple Wiener-It\^o integrals:
\begin{equation}\label{eq:gen herm proc}
Z(t)=I_k(h_t):=\int_{\mathbb{R}^k}' \int_0^t~g(s\mathbf{1}-\mathbf{x})1_{\{s\mathbf{1}>\mathbf{x}\}}ds ~ B(dx_1)\ldots B(dx_k) ，
\end{equation}
where the prime $'$ indicates that one does not integrate on the diagonals $x_p=x_q$, $p\neq q \in \{1,\ldots,k\}$, $B(\cdot)$ is a Brownian random measure, and $g$ is a GHK defined in Definition \ref{Def:GHK}.
\end{Def}

The generalized Hermite processes are self-similar with Hurst exponent
\begin{equation}\label{eq:H}
H=\alpha+k/2+1\in (1/2,1),
\end{equation}
 that is, $\{Z(\lambda t), t>0 \}$ has the same finite-dimensional distributions as $\{\lambda^H Z(t), t>0\}$, and they have also stationary increments.
\begin{Eg}
When $g$ takes the particular form $g(\mathbf{x})=\prod_{j=1}^k x_j^{\alpha/k}$ where $\alpha/k\in \left(-\frac{1}{2}(1+\frac{1}{k}),-\frac{1}{2}\right)$, $Z(t)$ becomes the usual Hermite process obtained  through a non-central limit theorem in the context of long memory (e.g., \citet{taqqu:1979:convergence}, \citet{dobrushin:major:1979:non}, \citet{surgailis:1982:zones}).
\end{Eg}
In  \citet{bai:taqqu:2013:generalized} the following  subclass of functions $g$, called \emph{generalized Hermite kernel of Class (B)} was considered.
\begin{Def}\label{Def:class bounded}
We say that a nonzero homogeneous function $g$  on $\mathbb{R}_+^k$ having homogeneity exponent $\alpha$ is of Class (B) (abbreviated as ``GHK(B)'', ``B'' stands for ``boundedness''), if
\begin{enumerate}
\item $g$ is a.e.\ continuous on $\mathbb{R}_+^k$;
\item $|g(\mathbf{x})|\le C\|\mathbf{x}\|^{\alpha}$ for some constant $C>0$, where $\alpha$ is as in Definition \ref{Def:GHK}.
\end{enumerate}
\end{Def}

\begin{Rem}\label{Rem:bound prod}
The norm $\|\cdot\|$ in Definition \ref{Def:class bounded} can be any norm in the finite-dimensional space $\mathbb{R}^k$ since all the norms are equivalent. For convenience, we  choose throughout this paper $\|\mathbf{x}\|=\sum_{j=1}^k |x_j|$. The GHK(B) class is a subset of the GHK class, because if $g$ is a GHK(B),  then it is homogeneous and hence satisfies Condition \ref{ass:homo} of Definition \ref{Def:GHK}.  It also satisfies Condition \ref{ass:int2} of Definition \ref{Def:GHK}. Indeed,  we have for some $C, C'>0$ that
\begin{equation}\label{eq:g bound prod}
|g(\mathbf{x})|\le C \|\mathbf{x}\|^{\alpha}=C\left(\sum_{j=1}^k x_j\right)^\alpha \le C' \prod_{j=1}^k x_j^{\alpha/k},~ \mathbf{x}\in \mathbb{R}_+^k,
\end{equation}
where the last inequality follows from the arithmetic-geometric mean inequality 
$$
k^{-1}\sum_{j=1}^ky_j\ge \left(\prod_{j=1}^k y_j\right)^{1/k} \quad \mbox{\rm  and}
\quad \alpha<0.
$$
 In view of Condition \ref{ass:homo} of  Definition \ref{Def:GHK}, since $-1\le -1/2-1/(2k)<\alpha/k<-1/2$,  we hence have
\begin{align*}
\int_{\mathbb{R}_+^k} |g(\mathbf{x})g(\mathbf{1}+\mathbf{x})|d\mathbf{x}\le C'\left(\int_0^\infty x^{\alpha/k}(1+x)^{\alpha/k}dx\right)^k<\infty.
\end{align*}
\end{Rem}
\begin{Eg}
As an example of a GHK(B), we can simply set $g(\mathbf{x})$ equal to
\[
g_1(\mathbf{x})=\|\mathbf{x}\|^{\alpha}=|x_1+\ldots+x_k|^{\alpha}=(x_1+\ldots+x_k)^{\alpha},\quad\alpha\in (-\frac{k+1}{2},-\frac{k}{2}),
\]
since $\mathbf{x}\in \mathbb{R}_+^k$.
\end{Eg}

\begin{Eg}
As another example, consider 
$$
g_2(\mathbf{x})=\prod_{j=1}^k x_j^{a_j}/(\sum_{j=1}^k x_j^b),\quad a_j>0,\ b>0,
 $$
 and 
 $$
 \sum_{j=1}^k a_j-b\in (-\frac{k+1}{2},-\frac{k}{2}).
  $$
  $g_2$ is continuous and homogeneous with exponent 
  $\alpha= \sum_{j=1}^k a_j-b$.
   It is a GHK(B) because the functions $\mathbf{x}\rightarrow \prod_{j=1}^k x_j^{a_j}$ and $\mathbf{x}\rightarrow (\sum_{j=1}^k x_j^b)^{-1}$ are bounded on the $k$-dimensional unit sphere restricted to $\mathbb{R}_+^k$. For instance, 
   $$
   \Big(\sum_{j=1}^k x_j^b\Big)^{1/b} \le C \|\mathbf{x}\|
    $$
    by the equivalence of norms on $\mathbb{R}^k$. Thus $g_2(\mathbf{x})\le C\|\mathbf{x}\|^{\alpha}$.
\end{Eg}

\begin{Eg}
It is easy to see  that  the set of GHK(B) functions on $\mathbb{R}_+^k$ with fixed homogeneity exponent $\alpha$ (with the zero function added) is closed under  linear combinations  and taking maximum  or minimum.  Thus one can consider   $g_1+g_2$, $g_1\vee g_2$ and $g_1\wedge g_2$ using the $g_1$ and $g_2$ in the foregoing examples.
\end{Eg}

In \citet{bai:taqqu:2013:generalized}, non-central limit theorems involving   GHK(B) are established\footnote{In \citet{bai:taqqu:2013:generalized}, the non-central limit theorem is  shown to hold for a larger class of functions which includes functions like $g(\mathbf{x})=\prod_{j=1}^k x_j^{\alpha/k}$, called Class (L). We do not  consider this class here, since the main result Theorem \ref{Thm:Basic NCLT} below does not hold for Class (L) in general.}.
These theorems involve sums of a long-memory stationary process  called \emph{discrete chaos process} defined as
\begin{equation}\label{eq:off diagonal}
X'(n)=\sum_{\mathbf{i}\in \mathbb{Z}_+^k}' a(i_1,\ldots,i_k) \epsilon_{n-i_1}\ldots \epsilon_{n-i_k}=\sum_{\mathbf{i}\in \mathbb{Z}_+^k}' a(\mathbf{i}) \epsilon_{n-i_1}\ldots \epsilon_{n-i_k},
\end{equation}
where $a(\mathbf{i})=g(\mathbf{i})L(\mathbf{i})$, $g$ is a GHK(B), $L$ is some  asymptotically negligible function (see (\ref{eq:LRD a}) and the lines below), and the prime $'$ means that we do not sum on the diagonals $i_p=i_q$, $p\neq q\in \{1,\ldots,k\}$, i.e., the summation in (\ref{eq:off diagonal}) is only over unequal $i_1,\ldots,i_k$. We note that when $a(\cdot)$ is symmetric,  the autocovariance of $X'(n)$ in (\ref{eq:off diagonal}) is
$$
\gamma(n)=\E X'(n)X'(0)= k! \sum_{\mathbf{i}\in \mathbb{Z}_+^k}'a(\mathbf{i})a(\mathbf{i}+n\mathbf{1}),~ n\ge 0.
$$
\begin{Rem}
The difference between the \emph{discrete chaos process} $X'(n)$ defined in (\ref{eq:off diagonal}) and the Volterra process $X(n)$ in (\ref{eq:X(n)})  is the exclusion of the diagonals.
\end{Rem}

\section{$L^2(\Omega)$-definiteness}\label{Sec:inf homo sum}
In this section, we derive conditions under which a $k$-th order  polynomial form with diagonals is well-defined.

The $k$-th order Volterra process in (\ref{eq:X(n)}) is a  polynomial form in i.i.d.\ random variables $\{\epsilon_i\}$. To allow for long memory and obtain  non-central limit theorems, the coefficient $a(\mathbf{i})$ in (\ref{eq:X(n)}) must be nonzero at an infinite number of $\mathbf{i}\in \mathbb{Z}_+^k$. Otherwise $X(n)$ is an $m$-dependent sequence and thus subject to the central limit theorem (\citet{billingsley:1956:invariance}). So the first problem is to ensure that such a polynomial form with an infinite number of terms is well-defined, that is, to determine when the following random variable is well-defined:
\begin{equation}\label{eq:diag poly}
X=\sum_{0<i_1,\ldots,i_k<\infty}a(i_1,\ldots,i_k)\epsilon_{i_1}\ldots \epsilon_{i_k}=\sum_{\mathbf{i}\in \mathbb{Z}^k_+} a(\mathbf{i}) \epsilon_{i_1}\ldots \epsilon_{i_k},
\end{equation}
where $\{\epsilon_i\}$ is an i.i.d.\ sequence such that
\begin{equation}\label{eq:mean-k}
\E \epsilon_i=0,~\E \epsilon_i^2=1, ~\E |\epsilon_i|^{k}<\infty.
\end{equation}
 One can restrict $a(\mathbf{i})$ to be a symmetric function in $\mathbf{i}$, since a permutation of the variables does not affect $X$, but we shall not do so unless indicated, because it is easier to write down non-symmetric $a(\cdot)$'s.

First, we have the following straightforward criterion for the $L^1(\Omega)$-well-definedness of $X$:
\begin{Pro}\label{Pro:L^1 well-defined}
If $\sum_{\mathbf{i}\in \mathbb{Z}^k_+} |a(\mathbf{i})|<\infty $, then $X$ in (\ref{eq:diag poly}) is well-defined in the $L^1(\Omega)$-sense.
\end{Pro}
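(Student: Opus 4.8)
The plan is to show that the series defining $X$ converges absolutely in the Banach space $L^1(\Omega)$; by completeness this then guarantees that $X$ is a well-defined element of $L^1(\Omega)$, and moreover that its value does not depend on the order in which the multi-index sum is enumerated. So the whole argument reduces to bounding the sum of the $L^1$-norms of the individual summands.

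The one real step is a uniform bound on $\E|a(\mathbf{i})\epsilon_{i_1}\cdots\epsilon_{i_k}|$. The mild complication is that the indices $i_1,\ldots,i_k$ need not be distinct, so the product $\epsilon_{i_1}\cdots\epsilon_{i_k}$ may involve powers of a single $\epsilon$ up to the $k$-th; this is precisely why the moment assumption (\ref{eq:mean-k}) requires $\E|\epsilon_i|^k<\infty$ rather than merely a finite variance. To dispose of all multiplicity patterns at once, I would apply H\"older's inequality with the $k$ conjugate exponents $p_1=\cdots=p_k=k$ (so that $\sum_j p_j^{-1}=1$), giving
\[
\E|\epsilon_{i_1}\cdots\epsilon_{i_k}|\le \prod_{j=1}^k\left(\E|\epsilon_{i_j}|^k\right)^{1/k}=\E|\epsilon_1|^k,
\]
where the last equality uses that the $\epsilon_i$ are identically distributed. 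This bound is uniform in $\mathbf{i}$ and absorbs the diagonal contributions with no case analysis.

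Summing the $L^1$-norms then yields
\[
\sum_{\mathbf{i}\in\mathbb{Z}_+^k}\E\big|a(\mathbf{i})\epsilon_{i_1}\cdots\epsilon_{i_k}\big|\le \E|\epsilon_1|^k\sum_{\mathbf{i}\in\mathbb{Z}_+^k}|a(\mathbf{i})|<\infty
\]
by the hypothesis $\sum_{\mathbf{i}}|a(\mathbf{i})|<\infty$. Hence the series $\sum_{\mathbf{i}}a(\mathbf{i})\epsilon_{i_1}\cdots\epsilon_{i_k}$ is absolutely convergent in $L^1(\Omega)$, and since $L^1(\Omega)$ is complete it converges to a limit $X\in L^1(\Omega)$; absolute convergence also renders this limit independent of the enumeration of $\mathbb{Z}_+^k$. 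There is essentially no hard step here: once the uniform estimate $\E|\epsilon_{i_1}\cdots\epsilon_{i_k}|\le\E|\epsilon_1|^k$ is secured via H\"older, the conclusion is immediate from completeness of $L^1$. The genuinely delicate phenomena (the role of diagonals, the heavier moment requirements, and the interaction of orthogonality with repeated indices) surface only in the subsequent $L^2(\Omega)$-definiteness results, not in this $L^1$ statement.
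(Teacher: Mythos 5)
Your proof is correct and follows essentially the same route as the paper: both arguments reduce to the uniform bound $\E|\epsilon_{i_1}\cdots\epsilon_{i_k}|\le C$ (guaranteed by $\E|\epsilon_i|^k<\infty$) combined with absolute summability of $a(\cdot)$, followed by completeness of $L^1(\Omega)$ --- the paper phrases this as a Cauchy property of the box partial sums, while you phrase it as absolute convergence of the series, which are interchangeable. Your explicit H\"older step with exponents $p_1=\cdots=p_k=k$ is a clean way of justifying the moment bound that the paper simply asserts.
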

\begin{proof}
Let 
$$
X_m=\sum_{\mathbf{0}< \mathbf{i}\le m\mathbf{1}} a(i_1,\ldots,i_k)\epsilon_{i_1}\ldots \epsilon_{i_k},\quad m>0.
 $$
 It suffices to check that $X_m$ is a Cauchy sequence in $L^1(\Omega)$. This is true since for any $n>m>0$,
\begin{align*}
\E |X_m-X_n|\le \sum_{m\mathbf{1}<\mathbf{i}\le n\mathbf{1}}|a(\mathbf{i})| ~\E|\epsilon_{i_1}\ldots\epsilon_{i_k}|\le C \sum_{m\mathbf{1}<\mathbf{i}\le n\mathbf{1}}|a(\mathbf{i})|,
\end{align*}
where $\E|\epsilon_{i_1}\ldots\epsilon_{i_k}|$ is bounded above by a constant because of the assumption $\E|\epsilon_i|^k<\infty$ in (\ref{eq:mean-k}).
\end{proof}

The absolute summability assumption  in Proposition \ref{Pro:L^1 well-defined} is easy to work with, but it is unfortunately too restrictive for incorporating long memory.
 We will introduce instead a condition on $a(\mathbf{i})$ so that $X$ is well-defined in the $L^2(\Omega)$-sense. Beside the obvious assumption $\E \epsilon_i^{2k}<\infty$, some delicate assumptions on $a(\mathbf{i})$  need to be imposed, which are stated in Proposition \ref{Pro:diag well def} below.
We first give an outline of the idea. If $X$ in (\ref{eq:diag poly}) is instead defined as an off-diagonal polynomial form:
\begin{equation}\label{eq:off diag poly}
X'=\sum_{\mathbf{i}\in \mathbb{Z}^k_+}' a(\mathbf{i}) \epsilon_{i_1}\ldots \epsilon_{i_k},
\end{equation}
then due to the off-diagonality, it is easy to see that the $L^2(\Omega)$-well-definedness of $X'$ is guaranteed by the simple square-summability condition:
$$\sum_{\mathbf{i}\in \mathbb{Z}^k_+}' a(\mathbf{i})^2<\infty,$$
which equals $(k!)^{-1}\E X'^2$ if $a(\cdot)$ is symmetric. In fact, this $L^2(\Omega)$-defineness criterion still holds if one has more generally
\begin{equation}\label{eq:off diagonal poly different noise}
X'=\sum_{\mathbf{i}\in \mathbb{Z}^k_+}' a(\mathbf{i}) \epsilon_{i_1}^{(1)}\ldots \epsilon_{i_k}^{(k)},
\end{equation}
where $\{\mathbd{\epsilon}_{i}:=(\epsilon_i^{(1)},\ldots, \epsilon_{i}^{(k)}),i\in \mathbb{Z}\}$ forms an i.i.d.\ sequence of $k$-dimensional vector with mean $0$ and finite variance in each component.   We will need this fact below.

In order to check that  the polynomial-form  in (\ref{eq:diag poly}), which includes diagonals, is well-defined, we shall decompose it into a finite number of off-diagonal polynomial forms, and check the well-definedness of each using the simple square-summability condition.
In order to do this, we introduce  some further notation, which will also be useful in the sequel.

We let $\mathcal{P}_k$  denote all the partitions of $\{1,\ldots,k\}$. If $\pi \in \mathcal{P}_k$, then $|\pi|$ denotes the number of sets in the partition. If we have a variable $\mathbf{i}\in\mathbb{Z}^k_+$ , then $\mathbf{i}_\pi$ denotes a new variable where its components are identified according to  $\pi$. For example, if $k=3$, $\pi=\{\{1,2\},\{3\}\}$ and $\mathbf{i}=(i_1,i_2,i_3)$, then $\mathbf{i}_\pi=(i_1,i_1,i_2)$. In this case we write $\pi=\{P_1,P_2\}$ where $P_1=\{1,2\}$ and $P_2=\{3\}$. If $a(\cdot)$ is a function on $\mathbb{Z}^k_+$, then $a_\pi(i_1,\ldots,i_m):=a(\mathbf{i}_\pi)$, where $m=|\pi|$. In the preceding example, $a_{\pi}(\mathbf{i})=a(i_1,i_1,i_2)$ with $m=2$.

Suppose that $\pi=\{P_1,\ldots, P_{|\pi|}\}$, where $P_i\cap P_j=\emptyset$, $\cup_i P_i=\{1,\ldots,k\}$. We  suppose throughout that the $P_i$'s are ordered according to their smallest element. In the preceding example, $P_1=\{1,2\}$ and $P_2=\{3\}$. We define the following summation  operation on a function $a(\cdot)$ on $\mathbb{Z}^k_+$.
\begin{Def}\label{Def:Sum Notation}
For any $T\subset \{1,\ldots,|\pi|\}$, the summation $S'_T(a_\pi)$ is obtained by summing $a_\pi$ over its variables indicated by $T$ off-diagonally, yielding a function with $|\pi|-|T|$ variables.
\end{Def}
For instance, if $\pi=\{\{1,5\},\{2\},\{3,4\}\}$, then $\mathbf{i}_\pi=(i_1,i_2,i_3,i_3,i_1)$  and if $T=\{1,3\}$, then
\begin{equation}\label{eq:S eg}
(S'_T a_\pi)(i)=\sum_{i_1,i_3}' a(i_1,i,i_3,i_3,i_1),
\end{equation}
provided that it is well-defined. Note that in this off-diagonal sum,  we require, in addition to  $i_1\neq i_3$, that neither $i_1$ nor $i_3$  equals to $i$.
 If $T=\emptyset$, $S'_T$ is understood to be the identity operator, where no summation is performed.

We need also Appell polynomials  which we briefly introduce here. For more details, see, e.g. \citet{avram:1987:noncentral} or Chapter 3.3 of \citet{beran:2013:long}.  Given a random variable $\epsilon$ with $\E|\epsilon|^K<\infty$,  the $k$-th order Appell polynomial with respect to the law of $\epsilon$, is defined through the following recursive relation:
\begin{align*}
\frac{d}{dx}A_p(x)= pA_{p-1}(x),\quad \E A_p(\epsilon)=0, \quad  A_0(x)=1, \quad p=1,\ldots,K.
\end{align*}
For example, if $\mu_p=\E \epsilon^p$, then $A_1(x)=x-\mu_1$, $A_2(x)=x^2-2\mu_1 x+2\mu_1^2-\mu_2$, etc. If in addition $\mu_1=0$, then $A_1(x)=x$, and $A_2(x)=x^2-\mu_2$. For consistency, one sets $\mu_0=\E \epsilon^0=1$. We will use
an important property of Appell polynomials, namely,  for any integer $p\ge 0$,
\begin{equation}\label{eq:appell decomp}
x^p = \sum_{j=0}^p {p\choose j} \mu_{p-j}A_j(x).
\end{equation}

\begin{Pro}\label{Pro:diag well def}
The polynomial form $X$ in (\ref{eq:diag poly}) is a random variable defined in the $L^2(\Omega)$-sense,
if the following three conditions hold:
\begin{enumerate}
\item $\E \epsilon_i^{2k}<\infty$;
\item $a(\cdot)$ satisfies the following: for any $\pi=\{P_1,\ldots,P_{|\pi|}\} \in \mathcal{P}_k$, we have
\begin{equation}\label{eq:diag 1 cond}
\sum_{0<i_1,\ldots,i_{|\pi|}<\infty}' a_\pi(i_1,\ldots,i_{|\pi|})^2<\infty;
\end{equation}
\item for any $\pi\in \mathcal{P}_k$ and any nonempty
$T\subset \{1,\ldots,|\pi|\}$ satisfying $|P_t|\ge 2$ for all $t\in T$,  we have
\begin{equation}\label{eq:diag 2 cond}
\sum'_{0<i_1,\ldots,i_{|\pi|-|T|}<\infty} \left[(S'_T |a_\pi|)(i_1,\ldots,i_{|\pi|-|T|})\right]^2<\infty,
\end{equation}
where if $|T|=|\pi|$, (\ref{eq:diag 2 cond}) is understood as merely stating that the sum $S'_T |a_\pi|$ converges.
\end{enumerate}
\end{Pro}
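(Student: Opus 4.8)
The plan is to reduce the diagonal polynomial form to a finite linear combination of off-diagonal polynomial forms, for which the elementary square-summability criterion — the $L^2$-definiteness of the off-diagonal form in (\ref{eq:off diagonal poly different noise}), allowing a different noise in each coordinate — applies directly. The two bookkeeping devices are the coincidence-partition decomposition of $X$ and the Appell expansion (\ref{eq:appell decomp}). I would work throughout with the finite partial sums $X_m$ over $\mathbf{0}<\mathbf{i}\le m\mathbf{1}$, for which all rearrangements are legitimate, and the content of the proof is to show that $\{X_m\}$ is Cauchy in $L^2(\Omega)$.

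First I would group the terms of $X$ according to the pattern of equalities among $i_1,\ldots,i_k$: every tuple $\mathbf{i}$ determines a partition $\pi\in\mathcal{P}_k$ recording which coordinates coincide, so that
$$
X=\sum_{\pi\in\mathcal{P}_k}\ \sum'_{0<i_1,\ldots,i_{|\pi|}<\infty} a_\pi(i_1,\ldots,i_{|\pi|})\prod_{j=1}^{|\pi|}\epsilon_{i_j}^{|P_j|},
$$
where now the inner sum is off-diagonal, the block-values $i_1,\ldots,i_{|\pi|}$ being distinct. Next I would expand each power via (\ref{eq:appell decomp}), $\epsilon_{i_j}^{|P_j|}=\sum_{l_j=0}^{|P_j|}\binom{|P_j|}{l_j}\mu_{|P_j|-l_j}A_{l_j}(\epsilon_{i_j})$. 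Since $\mu_1=0$, a singleton block $|P_j|=1$ contributes only its $l_j=1$ term, so the order $l_j=0$ can occur only for blocks with $|P_j|\ge 2$; this is exactly the restriction on $T$ in Condition 3. For a fixed $\pi$ and a fixed choice of orders $\mathbf{l}=(l_1,\ldots,l_{|\pi|})$, set $T=\{j:l_j=0\}$ and pull out the constant $\prod_j\binom{|P_j|}{l_j}\mu_{|P_j|-l_j}$. The variables $i_j$ with $j\in T$ then appear only inside $a_\pi$, and summing them out — off-diagonally and distinct from the remaining free variables — produces exactly $(S'_T a_\pi)$ evaluated at $\{i_j:j\notin T\}$. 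Thus each pair $(\pi,\mathbf{l})$ contributes a constant multiple of an off-diagonal polynomial form whose coefficient is $a_\pi$ (if $T=\emptyset$) or $S'_T a_\pi$ (if $T\neq\emptyset$), and whose coordinate noises are the mean-zero variables $A_{l_j}(\epsilon_{i_j})$, $j\notin T$ (recall $l_j\ge1$ there, so $\E A_{l_j}(\epsilon)=0$).

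I would then invoke the off-diagonal $L^2$-criterion coordinatewise: the vector $(A_1(\epsilon_i),\ldots,A_k(\epsilon_i))$ forms an i.i.d.\ sequence with mean-zero components, and each component has finite variance because $\E\epsilon_i^{2k}<\infty$ (Condition 1) controls $\E A_{l_j}(\epsilon)^2$ for $l_j\le|P_j|\le k$. Hence each off-diagonal summand is well-defined in $L^2$ provided its coefficient is square-summable: for $T=\emptyset$ this is Condition 2, (\ref{eq:diag 1 cond}), and for $T\neq\emptyset$ it is Condition 3, (\ref{eq:diag 2 cond}), with the degenerate case $|T|=|\pi|$ yielding a deterministic constant that is trivially in $L^2$. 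Since $\mathcal{P}_k$ and the set of order-choices are both finite, $X$ emerges as a finite linear combination of such terms, hence is well-defined in $L^2(\Omega)$.

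The step I expect to be the main obstacle is the rigorous passage to the limit, i.e.\ turning the formal rearrangement into genuine $L^2$-convergence of $X_m$. The delicate point is summing out the $T$-variables to infinity: this requires absolute convergence of $S'_T a_\pi$, which is precisely why Conditions 2 and 3 are stated with $|a_\pi|$ rather than $a_\pi$, so that a Fubini/dominated-convergence argument legitimizes interchanging the inner $T$-summation with the outer off-diagonal form and controls the joint tail. Once $(S'_T|a_\pi|)$ is known to be finite and square-summable, the off-diagonal criterion upgrades the finite-sum bookkeeping to genuine $L^2$-convergence, and assembling the finitely many $(\pi,\mathbf{l})$-contributions completes the argument.
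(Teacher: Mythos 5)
Your proposal is correct and takes essentially the same route as the paper's own proof: the coincidence-partition decomposition of $X$, the Appell expansion (\ref{eq:appell decomp}) together with the key observation that $\mu_1=0$ forces $|P_t|\ge 2$ whenever an order-zero term appears, and the reduction to off-diagonal forms with independent noises $A_{l_j}(\epsilon_{i_j})$, invoking (\ref{eq:diag 1 cond}) when $T=\emptyset$ and (\ref{eq:diag 2 cond}) when $T\neq\emptyset$. Your closing paragraph on justifying the rearrangement via absolute convergence and Fubini makes explicit a step the paper leaves implicit, but it is the same argument.
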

\begin{Rem}
To understand the need for (\ref{eq:diag 1 cond}) and (\ref{eq:diag 2 cond}), note that, in order to use the $L^2(\Omega)$-definiteness of (\ref{eq:off diagonal poly different noise}), it is necessary to center the powers of $\epsilon_i$. For example, consider
 $$
X=\sum_{i_1,i_2,i_3>0}a(i_1,i_2,i_3)\epsilon_{i_1}\epsilon_{i_2}\epsilon_{i_3}.
 $$
 If we focus on the subset $\{i_1=i_2\neq i_3\}$, then we have
\begin{align*}
\sum_{i_1,i_2>0}'a(i_1,i_1,i_2)\epsilon_{i_1}^2 \epsilon_{i_2}&=\sum_{i_1,i_2>0}'a(i_1,i_1,i_2)(\epsilon_{i_1}^2-\mu_2)\epsilon_{i_2}+\mu_2\sum_{i_2>0}\sum_{i_1\neq i_2>0}a(i_1,i_1,i_2)\epsilon_{i_2}
\\
&=\sum_{i_1,i_2>0}'a(i_1,i_1,i_2)A_2(\epsilon_{i_1})A_1(\epsilon_{i_2})+\mu_2\sum_{i_2>0}\sum_{i_1\neq i_2>0}a(i_1,i_1,i_2)A_1(\epsilon_{i_2}),
\end{align*}
where $\mu_2=\E\epsilon_i^2$. For the preceding two terms to be well-defined in $L^2(\Omega)$, we require
respectively
\begin{align*}
\sum_{i_1,i_2>0}'a(i_1,i_1,i_2)^2<\infty
\end{align*}
and
\[
\sum_{i_2>0}\left(\sum_{i_1\neq i_2>0}a(i_1,i_1,i_2)\right)^2
=\sum_{i_2>0} \left[(S'_Ta_\pi)(i_2)\right]^2<\infty, ~\pi=\{\{1,2\},\{3\}\},~T=\{1\}.
\]
\end{Rem}
An example of $a(\cdot)$ satisfying (\ref{eq:diag 1 cond})  but not (\ref{eq:diag 2 cond}) is given by:
\[
a(i_1,i_2)=(i_1+i_2)^{-1}(\log i_2)^{-1}.
\]
Note that $a(i_1,i_2)^2=(i_1+i_2)^{-2}(\log i_2)^{-2}$ is summable because $\sum_{i_2=2}^\infty i_2^{-1}(\log i_2)^{-2}$ is finite by the integral test, while $a(i,i)=\frac{1}{2} i^{-1}(\log i)^{-1}$ is not summable.

\begin{proof}[Proof of Proposition \ref{Pro:diag well def}]
By collecting various diagonal cases, we express $X$ as
\begin{equation}\label{eq:X dec1}
X=\sum_{\pi\in \mathcal{P}_k}\sum_{0<i_1,\ldots,i_{m}<\infty}' a_\pi(i_1,\ldots,i_{m})\epsilon_{i_1}^{p_1}\ldots \epsilon_{i_{m}}^{p_m},
\end{equation}
where $\pi=\{P_1,\ldots,P_{|\pi|}\} \in \mathcal{P}_k$, $m=|\pi|$, $p_j=|P_j|\ge 1$, $j=1,\ldots,m$, $p_1+\ldots+ p_m=k$.
Since $\mathcal{P}_k$ is finite, one can focus on the $L^2(\Omega)$-definedness of each term
\[
X_\pi: =\sum_{0<i_1,\ldots,i_m<\infty}' a_\pi(i_1,\ldots,i_m)\epsilon_{i_1}^{p_1}\ldots \epsilon_{i_m}^{p_m}.
\]

Let $A_j(x)$ be the $j$-th order Appell polynomial with respect to the law of $\epsilon_i$. Let 
$$
\mu_j=\E \epsilon_i^j.
$$
 Then by (\ref{eq:appell decomp}),
\begin{align*}
\epsilon_{i_1}^{p_1}\ldots \epsilon_{i_m}^{p_m}&= \sum_{j_1=0}^{p_1}\ldots \sum_{j_m=0}^{p_m} {p_1\choose j_1}\ldots {p_m\choose j_m} \mu_{p_1-j_1}\ldots \mu_{p_m-j_m} A_{j_1}(\epsilon_{i_1})\ldots A_{j_m}(\epsilon_{i_m}).
\end{align*}
Thus to ensure $X_\pi \in L^2(\Omega)$, it suffices to show that
\begin{equation}\label{eq:X_pi(j)}
X_\pi^{\mathbf{j}}:=\sum_{0<i_1,\ldots,i_m<\infty}'a_\pi(i_1,\ldots,i_m) {p_1\choose j_1}\ldots {p_m\choose j_m} \mu_{p_1-j_1}\ldots \mu_{p_m-j_m}  A_{j_1}(\epsilon_{i_1})\ldots A_{j_m}(\epsilon_{i_m})
\end{equation}
is well-defined in $L^2(\Omega)$ for any $(j_1,\ldots,j_m)\in\{0,1,\ldots,p_1\}\times \ldots \times \{0,1,\ldots,p_m\}$.

Note now the following crucial fact.
 Since $\mu_1=\E\epsilon_i=0$ by assumption, we do not need to consider $p_t-j_t=1$ in (\ref{eq:X_pi(j)}). Thus:
\begin{equation}\label{eq:j_t=0}
\text{If $j_t=0$, then we need to consider only $p_t=|P_t|\ge 2$.}
\end{equation}

Suppose first that $j_1,\ldots,j_m\ge 1$.  Since by assumption $\E A_j(\epsilon_i)=0$ and $\E A_j(\epsilon_i)^2<\infty$ for $1\le j\le k$, then in view of the discussion concerning (\ref{eq:off diagonal poly different noise}), it is sufficient to require (\ref{eq:diag 1 cond}). Now suppose that some $j_t=0$, and observe that  $A_{j_t}(\epsilon_i)$ is then the constant $1$. Thus if $T$ is the set of $t$'s such that $j_t=0$, then
\begin{equation}\label{eq:Xpi-j}
X_\pi^{\mathbf{j}}=\sum_{0<i_{t_1},\ldots,i_{t_{m-r}}<\infty}' (S'_Ta_\pi)(i_{t_1},\ldots,i_{t_{m-r}})c(\mathbf{p},\mathbf{j}) A_{j_{t_1}}(\epsilon_{i_{t_1}})\ldots A_{j_{t_{m-r}}}(\epsilon_{i_{t_{m-r}}}),
\end{equation}
where  $\{t_1,\ldots,t_{m-r}\}=\{1,\ldots,m\}\setminus T$, $r=|T|$ and
\begin{equation}\label{eq:cpj}
c(\mathbf{p},\mathbf{j})={p_1\choose j_1}\ldots {p_m\choose j_m} \mu_{p_1-j_1}\ldots \mu_{p_m-j_m}.
\end{equation}
 So one can bound $\E (X_{\pi}^{\mathbf{j}})^2$ by a constant times the sum in (\ref{eq:diag 2 cond}) since (\ref{eq:Xpi-j}) has the form (\ref{eq:off diagonal poly different noise}).

\end{proof}
\begin{Rem}
Since $\E A_j(\epsilon_i)=0$ for $j\ge 1$, one can see from (\ref{eq:X_pi(j)}) that $\E X_\pi^{\mathbf{j}}\neq 0$  only when $j_1=\ldots=j_m=0$, which implies
\begin{equation}\label{eq:expectation}
\E X=\sum_{\pi\in \mathcal{P}_k}\sum_{\mathbf{i}\in \mathbb{Z}^m_+}'a_{\pi}(\mathbf{i})\mu_{p_1}\ldots \mu_{p_m}.
\end{equation}
Relation (\ref{eq:diag 2 cond}) with $|T|=|\pi|$ ensures that $\E |X|<\infty$.
\end{Rem}

We now state here a practical sufficient condition for Proposition \ref{Pro:diag well def}:
\begin{Pro}\label{Pro:volterra (B) well-defined}
Let $a(\cdot)$ be a function on $\mathbb{Z}_+^k$ such that
\[
|a(\mathbf{i})|\le c \prod_{j=1}^k i_j^{\gamma_j},
\]
where $c>0$ is some constant and $\gamma_j<-1/2$, $j=1,\ldots,k$.
Then
\[
X:=\sum_{\mathbf{i}\in \mathbb{Z}_+^k}a(\mathbf{i}) \epsilon_{i_1}\ldots \epsilon_{i_k}
\]
is a well-defined random variable in $L^2(\Omega)$, where $\{\epsilon_i\}$ is i.i.d.\ with mean $0$ and variance $1$ and $\E |\epsilon_i|^{2k}<\infty$.
\end{Pro}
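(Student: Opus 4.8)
The plan is to verify that the pointwise bound $|a(\mathbf{i})|\le c\prod_{j=1}^k i_j^{\gamma_j}$ with every $\gamma_j<-1/2$ forces the three hypotheses of Proposition \ref{Pro:diag well def}. The first hypothesis, $\E\epsilon_i^{2k}<\infty$, is exactly the moment assumption stated here, so nothing is needed for it. The whole argument then reduces to bookkeeping of exponents under a partition $\pi$, together with the elementary observation that every off-diagonal sum of nonnegative terms is dominated by the corresponding full sum over $\mathbb{Z}_+$, which factorizes across coordinates.

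First I would record the effect of forming $a_\pi$. For a partition $\pi=\{P_1,\ldots,P_m\}$ with $m=|\pi|$, identifying the coordinates of $\mathbf{i}$ according to $\pi$ and writing $\delta_l:=\sum_{j\in P_l}\gamma_j$ gives
\[
|a_\pi(i_1,\ldots,i_m)|\le c\prod_{l=1}^m i_l^{\delta_l}.
\]
Since every $\gamma_j<-1/2$ and $p_l=|P_l|\ge 1$, we always have $\delta_l<-p_l/2\le -1/2$; and crucially, whenever $p_l\ge 2$ we get the stronger bound $\delta_l<-1$. These two facts drive the entire verification.

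For the square-summability condition (\ref{eq:diag 1 cond}) I would drop the off-diagonal restriction (legitimate since the summands are nonnegative) to bound the sum by $c^2\prod_{l=1}^m\sum_{i_l=1}^\infty i_l^{2\delta_l}$, which is finite because $2\delta_l<-1$ for every $l$. For condition (\ref{eq:diag 2 cond}), fix a nonempty $T$ with $|P_t|\ge 2$ for all $t\in T$. Since each such $t$ then has $\delta_t<-1$, summing $|a_\pi|$ over $i_t$ to the \emph{first} power already converges, so
\[
(S'_T|a_\pi|)(\ldots)\le c\Big(\prod_{t\in T}\sum_{i_t=1}^\infty i_t^{\delta_t}\Big)\prod_{l\notin T}i_l^{\delta_l}=C\prod_{l\notin T}i_l^{\delta_l}.
\]
Squaring and summing over the remaining coordinates again factorizes into one-dimensional sums with exponents $2\delta_l<-1$, hence is finite; the boundary case $|T|=m$ is just the assertion that the single number $S'_T|a_\pi|$ is finite, which follows from $\delta_l<-1$ for all $l$.

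I do not expect a genuine obstacle here; the one point requiring care, and the place I would focus the write-up, is that the restriction $|P_t|\ge 2$ imposed on $T$ in condition (\ref{eq:diag 2 cond}) is precisely what upgrades the exponent on each summed-out coordinate from $<-1/2$ to $<-1$, turning a non-summable first-power series into a summable one. This is exactly why diagonal blocks of size one are excluded from $T$ (they correspond to centered factors $A_{j_t}$ with $j_t\ge 1$, absorbed by condition (\ref{eq:diag 1 cond})), and it is the mechanism by which the product form of the bound makes all three conditions collapse to elementary $p$-series comparisons.
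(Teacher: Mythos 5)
Your proposal is correct and follows essentially the same route as the paper's proof: both reduce the three hypotheses of Proposition \ref{Pro:diag well def} to factorized $p$-series comparisons via the product bound $|a_\pi(\mathbf{i})|\le c\prod_l i_l^{\delta_l}$, using exactly the observation that blocks $P_l$ with $|P_l|\ge 2$ give exponents $\delta_l<-1$ (the paper's $\beta_{j_t}\le 2\gamma_{j_t}<-1$), so that summing those coordinates to the first power converges while the remaining coordinates are square-summable. Your explicit treatment of the boundary case $|T|=|\pi|$ and of dropping the off-diagonal restriction are fine points the paper leaves implicit, but the argument is the same.
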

\begin{proof}

We set $m=|\pi|$. Relation (\ref{eq:diag 1 cond}) holds because
\begin{equation}\label{eq:a_pi bound proof}
|a_\pi(\mathbf{i})|\le c_1 \prod_{j=1}^m i_j^{\beta_j}
\end{equation}
for some $c_1>0$, where $\beta_j\le \gamma_j<-1/2$, so
\[
\sum_{0<i_1,\ldots,i_m<\infty}'a_\pi(i_1,\ldots,i_m)^2 \le c_1^2\sum_{0<i_1,\ldots,i_m<\infty} i_1^{2\beta_1}\ldots i_m^{2\beta_m}<\infty.
\]
To check (\ref{eq:diag 2 cond}), note that when $t\in T$, we have $|P_t|\ge 2$ by (\ref{eq:j_t=0}), and so we have in addition $\beta_{j_t}\le 2\gamma_{j_t}<-1$ in (\ref{eq:a_pi bound proof}). Thus for some finite $c_2,c_3>0$,
\[
\Big[S'_T |a_\pi| (\mathbf{i})\Big]^2\le  c_2 \left[\sum_{i_t, t\in T}\left( \prod_{j_t,t\in T}i_{j_t}^{\beta_{j_t}}\right) \right]^2 \left( \prod_{j_s,s\notin T}i_{j_s}^{\beta_{j_s}}\right)^2= c_3 \left( \prod_{j_s,s\notin T}i_{j_s}^{2\beta_{j_s}}\right),
\]
where the summation in the middle is finite, and hence
\[
\sum_{0<i_{j_s}<\infty,s\notin T} \prod_{j_s, s\notin T} i_{j_s}^{2\beta_s}<\infty.
\]
\end{proof}

\section{Volterra processes with long memory}\label{Sec:LRD Volterra}
We introduce in this section the $k$-th order Volterra processes for which we establish non-central limit theorems in Section \ref{Sec:NCLT}.
\subsection{The off-diagonal process}
 We first introduce for convenience  the following $k$-th order discrete chaos process with different noises:
\begin{equation}\label{eq:off-diagonal chaos general}
X'(n):=\sum_{\mathbf{i}\in\mathbb{Z}_+^k}'a(\mathbf{i})\epsilon_{n-i_1}^{(1)}\ldots\epsilon_{n-i_k}^{(k)},
\end{equation}
where $\{\mathbd{\epsilon}_i:=(\epsilon_i^{(1)},\ldots, \epsilon_{i}^{(k)}), i\in \mathbb{Z}\}$ is an i.i.d.\ sequence of vectors, where $\E \epsilon^{(p)}_i=0$  and $\E|\epsilon^{(p)}_i|^2<\infty$, $p=1,\ldots,k$.
This is just an extension of  (\ref{eq:off diagonal}) adapted to (\ref{eq:off diagonal poly different noise}). For such $X'(n)$, it is easy to show that the autocovariance satisfies
\begin{align}\label{eq:acf bound}
|\gamma(n)|\le c k!\sum_{\mathbf{i}\in \mathbb{Z}_+^k}' \widetilde{|a|}(\mathbf{i})\widetilde{|a|}(\mathbf{i}+n\mathbf{1}), ~n\ge 0,
\end{align}
where $\widetilde{|a|}$ denotes the symmetrization of the absolute value $|a|(\mathbf{i}):=|a(\mathbf{i})|$, and $c>0$ is a constant which accounts for the covariance between different components of $\mathbd{\epsilon}_{i}$. For example, suppose $X'(n)=\sum_{i_1,i_2>0}' a(i_1,i_2)\epsilon_{i_1}^{(1)}\epsilon_{i_2}^{(2)}$, and $\sigma(p,q)=\E \epsilon_i^{(p)}\epsilon_i^{(q)}$, then for $n>0$,
\begin{align*}
\left|\E X'(n)X'(0)\right|&=\left|\sum_{i_1,i_2>0}' a(i_1+n,i_2+n)a(i_1,i_2) \sigma(1,1)\sigma(2,2)+\sum_{i_1,i_2>0}' a(i_1+n,i_2+n)a(i_2,i_1) \sigma(1,2)^2\right|\\
&\le C \Big(\sum_{i_1,i_2>0}' \big(|a(i_1,i_2)|+|a(i_2,i_1)|\big)\big(|a(i_1+n,i_2+n)|+|a(i_2+n,i_1+n)|\big)\Big)
\end{align*}
for some constant $C>0$.
\subsection{Off-diagonal decomposition of the Volterra process}
We will focus on  the $k$-th order Volterra process $X(n)$ in (\ref{eq:X(n)})   with  coefficients given as
\begin{equation}\label{eq:LRD a}
a(\mathbf{i})=g(\mathbf{i})L(\mathbf{i}),
\end{equation}
where $g$ is a GHK(B) on $\mathbb{R}_+^k$ with homogeneity exponent $\alpha \in (-\frac{k+1}{2},-\frac{k}{2})$ (see Definition \ref{Def:class bounded}), and $L$ is a bounded real-valued function on $\mathbb{Z}_+^k$ such that  for any $\mathbf{x}\in \mathbb{R}_+^k$ and any bounded $\mathbb{Z}_+^k$-valued function $\mathbf{B}(n)$, we have
\begin{equation}\label{eq:L assump}
\lim_{n\rightarrow \infty}L([n\mathbf{x}]+\mathbf{B}(n))=1
\end{equation}
(see \citet{bai:taqqu:2013:generalized} equation (25) and Remark 4.5).

\begin{Pro}\label{Pro:X(n) well defined}
The process $X(n)$
is  well-defined in the $L^2(\Omega)$-sense.
\end{Pro}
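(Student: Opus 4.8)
The plan is to reduce the statement to the ready-made sufficient condition of Proposition \ref{Pro:volterra (B) well-defined}. First I would note that, for each fixed $n$, the random variable $X(n)=\sum_{\mathbf{i}\in\mathbb{Z}_+^k}a(\mathbf{i})\epsilon_{n-i_1}\ldots\epsilon_{n-i_k}$ has, after relabelling the i.i.d.\ noise, exactly the structure of the variable $X=\sum_{\mathbf{i}\in\mathbb{Z}_+^k}a(\mathbf{i})\epsilon_{i_1}\ldots\epsilon_{i_k}$ treated in Proposition \ref{Pro:volterra (B) well-defined}. Since the $L^2(\Omega)$-well-definedness criterion there depends only on $|a(\cdot)|$ and on the i.i.d.\ property, stationarity of $\{\epsilon_i\}$ lets me disregard the shift by $n$, so it suffices to establish the required pointwise bound on $a(\cdot)$.

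Next I would produce that bound. Since $L$ is bounded, say $|L(\mathbf{i})|\le M$, and $g$ is a GHK(B) with homogeneity exponent $\alpha$, the chain of inequalities in (\ref{eq:g bound prod}) yields $|a(\mathbf{i})|=|g(\mathbf{i})|\,|L(\mathbf{i})|\le MC\|\mathbf{i}\|^{\alpha}\le MC'\prod_{j=1}^k i_j^{\alpha/k}$ on $\mathbb{Z}_+^k$, the last step using the arithmetic-geometric mean inequality together with $\alpha<0$. Because $\alpha\in(-\tfrac{k+1}{2},-\tfrac{k}{2})$ we have $\alpha/k\in(-\tfrac12(1+\tfrac1k),-\tfrac12)$, and in particular the exponents $\gamma_j:=\alpha/k$ satisfy $\gamma_j<-1/2$ for every $j$. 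This is precisely the hypothesis of Proposition \ref{Pro:volterra (B) well-defined}, while the standing assumption $\E|\epsilon_i|^{2k}<\infty$ supplies its remaining moment requirement. Invoking that proposition then delivers the $L^2(\Omega)$-well-definedness of $X(n)$, which (through Proposition \ref{Pro:diag well def}) amounts to verifying both the off-diagonal square-summability (\ref{eq:diag 1 cond}) and the collapsed-diagonal condition (\ref{eq:diag 2 cond}) for the partitioned coefficients.

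The argument is a verification rather than a genuinely new construction, so I do not expect a real obstacle; the only points needing care are confirming that $\alpha/k$ lies \emph{strictly} below $-1/2$ (so that both summability conditions of Proposition \ref{Pro:diag well def} are met) and observing that the regularity property (\ref{eq:L assump}) of $L$ is irrelevant here, since only its boundedness enters the well-definedness.
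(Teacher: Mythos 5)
Your proposal is correct and matches the paper's own argument, which simply combines Remark \ref{Rem:bound prod} (the bound $|g(\mathbf{i})|\le C'\prod_j i_j^{\alpha/k}$ with $\alpha/k<-1/2$, via the arithmetic-geometric mean inequality) with Proposition \ref{Pro:volterra (B) well-defined}. Your additional remarks on the irrelevance of the shift by $n$ and of property (\ref{eq:L assump}) are correct but are details the paper leaves implicit.
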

\begin{proof}
Follows from Remark \ref{Rem:bound prod} and Proposition \ref{Pro:volterra (B) well-defined}.
\end{proof}

The off-diagonal decomposition (\ref{eq:X dec1}) of a homogeneous polynomial form obtained in the proof of Proposition \ref{Pro:diag well def} plays  also a crucial role in analyzing the autocovariance  of $X(n)$ and deriving limit theorems. As in (\ref{eq:X dec1}) and (\ref{eq:X_pi(j)}), we have
\begin{equation*}
X(n)=\sum_{\pi\in \mathcal{P}_k}\sum_{\mathbf{j}\in J_{\pi}}X_{\pi}^\mathbf{j}(n),
\end{equation*}
where $\mathcal{P}_k$ is the set of all partitions of $\{1,\ldots,k\}$,  $\pi=\{P_1,\ldots,P_m\}$, $p_t=|P_t|$, $J_{\pi}=\{0,1,\ldots,p_1\}\times \ldots \times \{0,1,\ldots,p_m\}$,
\begin{equation}\label{eq:X_pi^j(n)}
X_{\pi}^\mathbf{j}(n):=\sum_{\mathbf{i}\in \mathbb{Z}_+^m}' a_\pi(\mathbf{i})c(\mathbf{p},\mathbf{j}) A_{j_1}(\epsilon_{n-i_1})\ldots A_{j_m}(\epsilon_{n-i_m}),
\end{equation}
with $c(\mathbf{p},\mathbf{j})$ given as in (\ref{eq:cpj}). Note that $X_{\pi}^\mathbf{j}(n)$ is of the form (\ref{eq:off-diagonal chaos general}), where $a_\pi(\mathbf{i})c(\mathbf{p},\mathbf{j})$ replaces $a(\mathbf{i})$ and where $ A_{j_1}(\epsilon_{n-i_1}),\ldots, A_{j_m}(\epsilon_{n-i_m})$ are independent random variables replacing $\epsilon_{n-i_1}^{(1)},\ldots ,\epsilon_{n-i_k}^{(k)}$ with $m$ playing the role of  $k$. In view of (\ref{eq:expectation}), we have
\begin{equation}\label{eq:X_c}
X_c(n):=X(n)-\E X(n)=\sum_{\pi\in \mathcal{P}_k}\sum_{\mathbf{j}\in J_{\pi}^+}X_{\pi}^\mathbf{j}(n),
\end{equation}
with
\begin{equation}\label{eq:Jpi+}
J_{\pi}^+=\{0,1,\ldots,p_1\}\times \ldots \times \{0,1,\ldots,p_m\}\setminus \{(0,0,\ldots,0)\}
\end{equation}
instead. We recall again that since $\mu_1=0$, whenever  $j_t=0$, we need  to consider only $p_t\ge 2$, $t=1,\ldots,m$. Thus $X_\pi^{\mathbf{j}}$ can be further expressed as (\ref{eq:Xpi-j}). Note that while $m$ denotes the number of Appell polynomials in the product (\ref{eq:X_pi^j(n)}), $m-r$ denotes the number of Appell polynomials in the product (\ref{eq:Xpi-j}) where  each Appell polynomial has a positive order, those of order $0$ having been incorporated in $S'_T$.

Our first step is to obtain the asymptotic behavior of the \emph{autocovariance} of $X(n)$ or $X_c(n)$ when $g$ is a GHK(B). To this end we need some intermediate  results. We will repeatedly use the following elementary asymptotics:
if $\gamma<-1$, then
\begin{equation}\label{eq:sum power +1}
\sum_{n=1}^\infty (l+n)^\gamma =\sum_{n=l+1}^\infty n^\gamma \sim -(\gamma+1)^{-1} l^{\gamma+1}, \text{ as $l\rightarrow+\infty$}.
\end{equation}
A parallel result but with  equality holds for integration:
\begin{equation}\label{eq:int power +1}
\int_{0}^\infty (x+y)^\gamma dx = -(\gamma+1)^{-1}y^{\gamma+1}.
\end{equation}
Relation (\ref{eq:sum power +1}) can be derived using (\ref{eq:int power +1}) and an  integral approximation argument.
\begin{Lem}\label{Lem:int g}
Suppose that $g$ is a  GHK(B) of order $k$ with homogeneity exponent $\alpha\in \left(-\frac{k+1}{2},-\frac{k}{2}\right)$.  Let $0\le r<m=k-r$, then
\begin{equation}\label{eq:g_r}
g_r(\mathbf{x}):=\int_{\mathbb{R}_+^{r}}g(y_1,y_1,\ldots,y_r,y_r,x_{1},\ldots,x_{m-r}) dy_1\ldots dy_r
\end{equation}
 is a GHK on $\mathbb{R}_+^{k_r}$ with $k_r=k-2r$ and homogeneity exponent $\alpha_r=\alpha+r\in \left(-\frac{k_r+1}{2},-\frac{k_r}{2}\right)$.
\end{Lem}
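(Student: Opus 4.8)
The plan is to verify directly the three requirements of Definition \ref{Def:GHK} for $g_r$ --- measurability together with a.e.\ finiteness, homogeneity with the stated exponent, and the integrability Condition \ref{ass:int2} --- and the entire argument will hinge on a single uniform bound $|g_r(\mathbf{x})|\le C\|\mathbf{x}\|^{\alpha_r}$, from which everything else drops out.

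First I would establish this bound and, simultaneously, the a.e.\ finiteness of the defining integral \eqref{eq:g_r}. Writing $\mathbf{x}=(x_1,\ldots,x_{m-r})\in\mathbb{R}_+^{k_r}$ and $s=\|\mathbf{x}\|=\sum_i x_i$, the GHK(B) bound of Definition \ref{Def:class bounded} gives $|g(y_1,y_1,\ldots,y_r,y_r,\mathbf{x})|\le C\big(2\sum_{j=1}^r y_j+s\big)^{\alpha}$, since the doubled variables contribute $2\sum_j y_j$ to the norm. Substituting $y_j=su_j$ (Jacobian $s^r$) factors out the scale:
\[
\int_{\mathbb{R}_+^r}|g(y_1,y_1,\ldots,y_r,y_r,\mathbf{x})|\,d\mathbf{y}\le C\,s^{\alpha+r}\int_{\mathbb{R}_+^r}\Big(2\sum_{j=1}^r u_j+1\Big)^{\alpha}d\mathbf{u}.
\]
The remaining constant integral is finite: the integrand is bounded by $1$, and in the radial variable $t=\sum_j u_j$ it decays like $t^{\alpha+r-1}$ at infinity, which is integrable precisely because $\alpha+r<0$. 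This last inequality is supplied by the \emph{upper} endpoint $\alpha<-k/2$, as $\alpha+r<-k/2+r=-k_r/2<0$. By Tonelli, $g_r$ is then measurable, finite for every $\mathbf{x}$ with $s>0$ and hence a.e., and obeys $|g_r(\mathbf{x})|\le C'\|\mathbf{x}\|^{\alpha_r}$ with $\alpha_r=\alpha+r$.

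Homogeneity with exponent $\alpha_r$ follows by the change of variables $y_j\mapsto\lambda y_j$ in \eqref{eq:g_r}, which yields a Jacobian $\lambda^r$ and, via Condition \ref{ass:homo}, a factor $\lambda^\alpha$, giving $g_r(\lambda\mathbf{x})=\lambda^{\alpha+r}g_r(\mathbf{x})$. That $\alpha_r$ lies in the required interval is pure arithmetic with $k_r=k-2r$: the inequality $\alpha_r>-\tfrac{k_r+1}{2}$ is equivalent to $\alpha>-\tfrac{k+1}{2}$, and $\alpha_r<-\tfrac{k_r}{2}$ is equivalent to $\alpha<-\tfrac{k}{2}$, so both endpoints transfer exactly from the hypothesis on $\alpha$. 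Finally, Condition \ref{ass:int2} for $g_r$ follows from the bound $|g_r(\mathbf{x})|\le C'\|\mathbf{x}\|^{\alpha_r}$ by repeating verbatim the AM--GM computation of Remark \ref{Rem:bound prod}: since $\alpha_r/k_r\in(-1,-\tfrac12)$, one reduces $\int_{\mathbb{R}_+^{k_r}}|g_r(\mathbf{x})g_r(\mathbf{1}+\mathbf{x})|\,d\mathbf{x}$ to a product of convergent one-dimensional integrals of the form $u^{\alpha_r/k_r}(1+u)^{\alpha_r/k_r}$.

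The step needing the most care is the first: the uniform bound and the finiteness of the auxiliary $r$-fold integral. It is worth emphasizing how tightly the hypotheses are used, with no slack at either endpoint --- the upper bound on $\alpha$ is exactly what forces convergence of the $r$-fold integral at infinity, while the lower bound on $\alpha$ is exactly what keeps $\alpha_r$ above $-\tfrac{k_r+1}{2}$, which is needed for Condition \ref{ass:int2} (controlling the singularity of $g_r$ at the origin). The only remaining formal point is that Definition \ref{Def:GHK} requires $g_r$ to be nonzero: when $g$ is sign-definite (e.g.\ nonnegative) this is manifest, and in general the conclusion is to be read as applying whenever $g_r\not\equiv 0$.
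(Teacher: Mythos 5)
Your proof is correct and follows essentially the same approach as the paper's: both arguments reduce everything to the pointwise bound $|g_r(\mathbf{x})|\le C\|\mathbf{x}\|^{\alpha+r}$ obtained from the GHK(B) bound (which also gives a.e.\ finiteness, hinging on $\alpha+r<0$), verify homogeneity by the change of variables $y_j\mapsto\lambda y_j$, and then invoke the AM--GM argument of Remark \ref{Rem:bound prod} to get Condition \ref{ass:int2} of Definition \ref{Def:GHK}. The only differences are cosmetic --- you factor out the scale $s=\|\mathbf{x}\|$ by substitution and integrate radially, where the paper iterates the one-dimensional identity (\ref{eq:int power +1}) --- plus your closing caveat that $g_r$ could in principle vanish identically, a fine point the paper leaves implicit.
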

\begin{proof}
$g_r$ is well-defined, since by  Definition \ref{Def:class bounded} of GHK(B),  for some constant $C>0$, we have
\begin{equation}\label{eq:g_r bound}
|g(y_1,y_1,\ldots,y_r,y_r,x_{1},\ldots,x_{m-r})|\le C (y_1+\ldots+y_r+x_1+\ldots+x_{m-r})^\alpha.
\end{equation}
Thus by  applying (\ref{eq:int power +1}) iteratively, we need only to note that $\alpha+r<0$, because $2r<k$  and $\alpha<-k/2$.
We now check Condition \ref{ass:homo} of Definition \ref{Def:GHK}, that is,  the homogeneity of $g_r(\cdot)$. We have for any $\lambda>0$ that
\begin{align*}
g_r(\lambda\mathbf{x})&=\int_{\mathbb{R}_+^{r}}g(y_1,y_1,\ldots,y_r,y_r, \lambda x_{1},\ldots,\lambda x_{m-r}) dy_1\ldots dy_r\\
&=\int_{\mathbb{R}_+^r} g(\lambda y_1,\lambda y_1,\ldots,\lambda y_r,\lambda y_r,\lambda x_1,\ldots,\lambda x_{m-r})d(\lambda y_1)\ldots d(\lambda y_r)
\\
&=\lambda^{\alpha+r} \int_{\mathbb{R}_+^{r}}g(y_1,y_1,\ldots, y_r,y_r,  x_{1},\ldots, x_{m-r}) dy_1\ldots dy_r=\lambda^{\alpha+r}g_r(\mathbf{x}).
\end{align*}
We check then Condition \ref{ass:int2} of Definition \ref{Def:GHK}.
Integrating both sides of (\ref{eq:g_r bound}) with respect to $(y_1,\ldots,y_r)\in \mathbb{R}_+^r$ shows $|g_r(\mathbf{x})|\le c\|\mathbf{x}\|^{\alpha+r}$ for some $c>0$. So Condition \ref{ass:int2} in Definition \ref{Def:GHK} is satisfied in view of  Remark \ref{Rem:bound prod}.

\end{proof}
\begin{Rem}\label{Rem:H}
The index $r$ in  $g_r$ refers to the number of pairs of variables in $g$ that are identified. The number $m$ denotes the number of different variables in (\ref{eq:g_r}), and the number $k=2r+(m-r)=m+r$ denotes the total number of variables in $g$. Finally, $k_r=k-2r=m-r$ indicates the number of $x$ variables, that is, the size of the argument of $g_r$.  All the GHK(B) $g_r$, $r=0,\ldots,[k/2]$, obtained in (\ref{eq:g_r}), have the same  $H=\alpha_r+k_r/2+1$ (homogeneity exponent+dimension/2+1). This is because $\alpha_r=\alpha+r$ and $k_r=k-2r$.
\end{Rem}
\begin{Rem}\label{Rem:variable left}
Lemma \ref{Lem:int g} assumes $r<m$ or equivalently $k_r=k-2r>0$. In other words, that there is  a positive number of $x$ variables of $g_r(\cdot)$ in (\ref{eq:g_r}).
\end{Rem}

\subsection{Behavior of the autocovariances}

We have the following asymptotics for the autocovariance of $X_{\pi}^{\mathbf{j}}(n)$, which are the off-diagonal terms of $X_c(n)$ in (\ref{eq:X_c}). Note that  $\mathbf{j}=(j_1,\ldots,j_p)\neq\mathbf{0}$ because of centering, so $\sum_{t=1}^m \mathrm{1}_{\{j_t=0\}}<m$. Recall that by assumption we have $-1<2\alpha+k<0$.
\begin{Pro}\label{Pro:basic acf}
Let $m=|\pi|$, and $r=\sum_{t=1}^m \mathrm{1}_{\{j_t=0\}}<m$.
\begin{enumerate}[(i)]
\item If $m+r=k$ , then the autocovariance $\gamma(n)$ of $X_{\pi}^{\mathbf{j}}(n)$ satisfies
\begin{equation}\label{eq:lrdj}
\gamma(n)\sim c n^{2\alpha+k}
\end{equation}
as $n\rightarrow \infty$,
for some constant $c>0$.
\item If $m+r<k$, then
\begin{equation}\label{eq:srdj}
\sum_n |\gamma(n)|<\infty.
\end{equation}
\end{enumerate}
\end{Pro}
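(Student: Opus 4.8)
The plan is to treat each off-diagonal term $X_\pi^{\mathbf{j}}(n)$ as a chaos process of reduced order and to track how the homogeneity exponent and the number of free variables change when the zero-order Appell factors are summed out. Recall from (\ref{eq:Xpi-j}) that, after the $r$ Appell polynomials of order $0$ are absorbed into the operator $S'_T$, the process $X_\pi^{\mathbf{j}}(n)$ is an off-diagonal polynomial form of order $m-r$ in the variables $A_{j_t}(\epsilon_{n-i_t})$, $t\notin T$, with effective coefficient $\tilde a:=c(\mathbf{p},\mathbf{j})\,S'_T a_\pi$ on $\mathbb{Z}_+^{m-r}$. Setting $\epsilon^{(t)}_\ell:=A_{j_t}(\epsilon_\ell)$ exhibits $X_\pi^{\mathbf{j}}(n)$ as exactly a process of the form (\ref{eq:off-diagonal chaos general}), so its autocovariance is governed by (\ref{eq:acf bound}). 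The first step is to record the decay of $\tilde a$. Since $g$ is a GHK(B), $|a_\pi(\mathbf{i})|\le C(\sum_t p_t i_t)^\alpha\le C\|\mathbf{i}\|^\alpha$ on $\mathbb{Z}_+^m$; because each $t\in T$ has $p_t\ge 2$ by (\ref{eq:j_t=0}) while $r<m$, one has $\alpha+r<0$, and summing out the $r$ variables of $T$ one at a time via (\ref{eq:sum power +1}) (each intermediate exponent $\alpha+j$, $j<r$, being below $-1$) raises the exponent by one each time, giving $|\tilde a(\mathbf{i})|\le C\|\mathbf{i}\|^{\alpha+r}$ on $\mathbb{Z}_+^{m-r}$. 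Hence $\gamma(n)$ is expected to scale like $n^{2(\alpha+r)+(m-r)}=n^{2\alpha+m+r}$, and the whole statement reduces to comparing $2\alpha+m+r$ with $-1$.

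The dichotomy between (i) and (ii) is then purely arithmetic. Since $p_t\ge 1$ for all $t$ and $p_t\ge 2$ for $t\in T$, we always have $k=\sum_{t=1}^m p_t\ge (m-r)+2r=m+r$, with equality precisely when every block indexed by $T$ has size $2$ and every other block is a singleton. Using $\alpha\in(-\frac{k+1}{2},-\frac{k}{2})$, so that $2\alpha+k\in(-1,0)$, the exponent $2\alpha+m+r$ lies in $(-1,0)$ when $m+r=k$, and satisfies $2\alpha+m+r\le 2\alpha+k-1<-1$ when $m+r<k$; this is exactly the split into the long-range and short-range regimes.

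For case (ii) I would give a direct summability proof, avoiding any delicate asymptotics. Plugging $|\tilde a(\mathbf{i})|\le C\|\mathbf{i}\|^{\alpha+r}$ into (\ref{eq:acf bound}), interchanging the order of summation, and applying (\ref{eq:sum power +1}) twice does the job: first, for fixed $\mathbf{i}$, $\sum_n\|\mathbf{i}+n\mathbf{1}\|^{\alpha+r}\sim c\|\mathbf{i}\|^{\alpha+r+1}$, which is legitimate because $2(\alpha+r)=(2\alpha+m+r)-(m-r)<-1-1$ forces $\alpha+r<-1$; second, $\sum_{\mathbf{i}}\|\mathbf{i}\|^{2\alpha+2r+1}<\infty$ over the $m-r$ free variables, precisely because $2\alpha+2r+1+(m-r)=2\alpha+m+r+1<0$. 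No singularity at the origin intervenes since the lattice indices are bounded below by $1$. This yields (\ref{eq:srdj}).

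For case (i), where every block of $T$ has size exactly $2$ and the remaining $m-r=k-2r=k_r$ blocks are singletons, the sum $S'_T a_\pi$ is, up to the off-diagonal exclusions and the factor $L$, the Riemann sum of the integral defining $g_r$ in Lemma \ref{Lem:int g}; by that lemma $g_r$ is a GHK on $\mathbb{R}_+^{k_r}$ with homogeneity $\alpha_r=\alpha+r$, so $X_\pi^{\mathbf{j}}(n)$ is asymptotically a generalized-Hermite-type chaos process with kernel proportional to $g_r$. The autocovariance asymptotics for such processes from \citet{bai:taqqu:2013:generalized} then give $\gamma(n)\sim c\,n^{2\alpha_r+k_r}=c\,n^{2\alpha+k}$, the constant being strictly positive because $\int_{\mathbb{R}_+^{k_r}}g_r(\mathbf{x})g_r(\mathbf{1}+\mathbf{x})\,d\mathbf{x}>0$ by Remark \ref{Rem:int|g(s1-x)|ds<inf}. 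The hard part, and the main obstacle, is exactly this transfer: one must show that the discrete off-diagonal sum $S'_T a_\pi$ — with its diagonal exclusions, its slowly varying factor $L$ controlled through (\ref{eq:L assump}), and the non-integrable singularity of $g$ at the origin — can be replaced by the integral $g_r$ uniformly enough to carry over the asymptotics, and that the remaining index pairings in $\E[X_\pi^{\mathbf{j}}(n)X_\pi^{\mathbf{j}}(0)]$ (including cross terms from the non-orthogonality of Appell polynomials of distinct orders) contribute only $o(n^{2\alpha+k})$. The GHK(B) boundedness is what makes these error estimates tractable.
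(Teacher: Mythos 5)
Your part (ii) is correct and essentially coincides with the paper's own argument: bound $S'_T|a_\pi|$ by $C\|\cdot\|^{\alpha+r}$ via iterated use of (\ref{eq:sum power +1}), then sum the product over $n$ and over the $m-r$ free indices using $2\alpha+m+r<-1$. The genuine gap is in part (i). There you reduce the claim to the assertion that the discrete off-diagonal sum $S'_T a_\pi$ may be replaced by the trace integral $g_r$ of (\ref{eq:g_r}) ``uniformly enough to carry over the asymptotics,'' and you yourself flag this as the main obstacle without carrying it out. Appealing to the autocovariance asymptotics of \citet{bai:taqqu:2013:generalized} does not close it: those results apply to discrete chaos processes whose coefficient is exactly of the form $g'(\mathbf{i})L'(\mathbf{i})$ with $g'$ a GHK(B) and $L'$ satisfying (\ref{eq:L assump}), whereas here you would first have to prove that $S'_T a_\pi(\mathbd{\ell})=g_r(\mathbd{\ell})L^*(\mathbd{\ell})$ for some such $L^*$ --- i.e., control the Riemann-sum error, the diagonal exclusions (which couple the summed indices to $\mathbd{\ell}$ and, in the covariance, to $n$), and the non-integrable singularity of $g$ at the origin. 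That intermediate claim is of essentially the same difficulty as the proposition itself, so as written part (i) is a reduction to an unproved statement, not a proof.

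The paper avoids your two-step route entirely and never compares $S'_T a_\pi$ with $g_r$ as functions. It writes the autocovariance directly as a single off-diagonal sum over all $k$ indices,
\begin{equation*}
\gamma(n)=(m-r)!\sum_{(\mathbf{i}_1,\mathbf{i}_2,\mathbd{\ell})\in D(n)}g(\mathbf{i}_1,\mathbd{\ell})\,g(\mathbf{i}_2,\mathbd{\ell}+n\mathbf{1}),
\end{equation*}
rescales all $2r+(m-r)$ summation variables by $n$ simultaneously, and uses the homogeneity of $g$ to pull out the factor $n^{2\alpha+k}$ \emph{exactly}, leaving a Riemann-type integral of $g\otimes g$ against an indicator $1_{E(n)}\rightarrow 1$ a.e.; the Dominated Convergence Theorem then applies because the GHK(B) bound $|g(\mathbf{z})|\le c\|\mathbf{z}\|^{\alpha}$ furnishes the dominating function $g^*(\mathbf{x}_1,\mathbf{y})g^*(\mathbf{x}_2,\mathbf{1}+\mathbf{y})$, integrable by Lemma \ref{Lem:int g} and Remark \ref{Rem:bound prod}. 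This single dominated-convergence computation is precisely the step your sketch is missing, and it yields the constant $c=(k-2r)!\int_{\mathbb{R}_+^{m-r}}g_r(\mathbf{x})g_r(\mathbf{1}+\mathbf{x})d\mathbf{x}>0$ in one stroke. A minor further point: in case (i) every block with $j_t>0$ is a singleton, so every positive-order Appell factor is $A_1(\epsilon)=\epsilon$; within a single $X_\pi^{\mathbf{j}}$ there are no ``cross terms from Appell polynomials of distinct orders'' to estimate --- that concern arises only when combining different terms, i.e., in Corollary \ref{Cor:full acf}, not in this proposition.
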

\begin{proof}
We claim first that $m+r=k$ if and only if in the partition $\pi=\{P_1,\ldots,P_m\}$, every $|P_t|\le 2$, and whenever $|P_t|=2$, one has $j_t=0$. Indeed, as noted in (\ref{eq:j_t=0}), if $j_t=0$, then $|P_t|\ge 2$, and thus
$$k=\sum_{t=1}^m 1_{\{j_t=0\}}|P_t|  +\sum_{t=1}^m 1_{\{j_t>0\}}|P_t|\ge 2r+(m-r)=m+r.$$
The equality is attained only if when $j_t=0$, $|P_t|=2$, and when $j_t>0$, $|P_t|=1$.

Suppose first that $m+r=k$. We can  assume without loss of generality in (\ref{eq:LRD a}) that $a(\cdot)=g(\cdot)$ is symmetric and $L=1$ (including a general $L$ in the following argument is easy).

Using the symmetry of $g$, $X_\pi^{\mathbf{j}}(n)$ in (\ref{eq:X_pi^j(n)}) simplifies. To compute it, note that since $r$ corresponds to the number of Appell polynomials of order $0$ which are all equal to $1$, we have
\[
j_1=\ldots=j_r=0,~ j_{r+1}=\ldots = j_m=1, ~A_{j_1}(\epsilon)=\ldots=A_{j_r}(\epsilon)=1,~ A_{j_{r+1}}(\epsilon)=\ldots A_{j_m}(\epsilon)=\epsilon,
\]
\[
\pi =\Big\{\{1,2\},\ldots,\{2r-1,2r\},\{2r+1\},\ldots\{m+r\}\Big\},
\]
\[
\mathbf{p}=(\underbrace{2,\ldots,2}_r,\underbrace{1,\ldots,1}_{m-r}),~
c(\mathbf{p},\mathbf{j})={2\choose 0}\ldots {2\choose 0} {1\choose 1}\ldots {1\choose 1}=1,
\]
and $\mu_0=\E \epsilon^0 =1$, $\mu_2=\E \epsilon^2=1$. We therefore get
\begin{equation}\label{eq:basic contri}
X_{\pi}^{\mathbf{j}}(n)=\sum_{(\mathbf{i},\mathbd{\ell})\in \mathbb{R}_+^k}' g(i_1\mathbf{1}_2,\ldots,i_r\mathbf{1}_2,l_{1},\ldots,l_{m-r}) \epsilon_{n-l_{1}}\ldots \epsilon_{n-l_{m-r}},
\end{equation}
where $\mathbf{1}_2$ denotes the vector made of two $1$'s.
 Let 
 $$
 \mathbf{i}_{t}=(i_{1,t}\mathbf{1}_2,\ldots,i_{r,t}\mathbf{1}_2),\quad t=1,2, \quad \mathbd{\ell}=(l_1,\ldots,l_{m-r}). 
 $$
 Since we are excluding the diagonals, let
\[
D(n)=\{(\mathbf{i}_1,\mathbf{i}_2,\mathbd{\ell})>\mathbf{0}:~ i_{u,t}\neq i_{v,t},~ l_u\neq l_v \text{ for} ~u\neq v; \text{ and}~i_{p,1}\neq l_q, ~i_{p,2}\neq l_{q}+n\text{ including the case }p=q\}.
\]
Then
\begin{align}
\gamma(n)
&=(m-r)!\sum_{(\mathbf{i}_1,\mathbf{i}_2,\mathbd{\ell})\in D(n)}g(\mathbf{i}_1,\mathbd{\ell})g(\mathbf{i}_2,\mathbd{\ell}+n\mathbf{1})\notag\\
&=(k-2r)!n^{2\alpha+k}\int_{\mathbb{R}_+^{m-r}}d\mathbf{y}\int_{\mathbb{R}_+^r}d\mathbf{x}_1 \int_{\mathbb{R}_+^r}d\mathbf{x}_2
g\left(\frac{[n\mathbf{x}_1]+\mathbf{1}}{n},\frac{[n\mathbf{y}]+\mathbf{1}}{n}\right) g\left(\frac{[n\mathbf{x}_2]+\mathbf{1}}{n},\mathbf{1}+\frac{[n\mathbf{y}]+\mathbf{1}}{n}\right)\mathrm{1}_{E(n)},\label{eq:autocov pass limit}
\end{align}
since $m+r=k$ implies $m-r=k-2r$ and where $\mathbf{x}_t=(x_{1,t}\mathbf{1}_2,\ldots,x_{r,t}\mathbf{1}_2)$, $t=1,2$, $\mathbf{y}=(y_1,\ldots,y_{m-r})$, and $D(n)$ in the summation is expressed as
\begin{align*}
E(n)=\{&\mathbf{x}_1,\mathbf{x}_2\in \mathbb{R}_+^r, \mathbf{y}\in \mathbb{R}_+^{m-r}: ~[nx_{u,t}]\neq [nx_{v,t}],~ [ny_u]\neq [ny_v]\text{ for }u\neq v; \\&\text{and} ~[nx_{p,1}]\neq [ny_q],~ [nx_{p,2}]\neq [ny_q]+n \text{ including the case }p=q\}.
\end{align*}
Note first that $1_{E(n)}$ converges to $1$ a.e. as $n\rightarrow\infty$.
By Definition \ref{Def:class bounded}, $|g(\mathbf{z})|\le c_0\|\mathbf{z}\|^\alpha$ for some $c_0>0$. Since $\frac{[n\mathbf{x}]+\mathbf{1}}{n}>\mathbf{x}$ and $\alpha<0$, we have
\begin{align*}
&\int_{\mathbb{R}_+^r}
\left|g\left(\frac{[n\mathbf{x}_1]+\mathbf{1}}{n},\frac{[n\mathbf{y}]+\mathbf{1}}{n}\right)\right| d\mathbf{x}_1 \int_{\mathbb{R}_+^r}\left|g\left(\frac{[n\mathbf{x}_2]+\mathbf{1}}{n},\mathbf{1}+\frac{[n\mathbf{y}]+\mathbf{1}}{n}\right)\right|d\mathbf{x}_2\\\le&
\int_{\mathbb{R}_+^r}
g^*\left(\frac{[n\mathbf{x}_1]+\mathbf{1}}{n},\frac{[n\mathbf{y}]+\mathbf{1}}{n}\right) d\mathbf{x}_1 \int_{\mathbb{R}_+^r}g^*\left(\frac{[n\mathbf{x}_2]+\mathbf{1}}{n},\mathbf{1}+\frac{[n\mathbf{y}]+\mathbf{1}}{n}\right)d\mathbf{x}_2
\\\le&
 \int_{\mathbb{R}_+^r} g^*\left(\mathbf{x}_1,\mathbf{y}\right)d\mathbf{x}_1 \int_{\mathbb{R}_+^r} g^*\left(\mathbf{x}_2,\mathbf{1}+\mathbf{y}\right) d\mathbf{x}_2 =  :g^*_r(\mathbf{y})g^*_r(\mathbf{1}+\mathbf{y}),
\end{align*}
where $g^*(\mathbf{z})=c_1\|\mathbf{z}\|^{\alpha}$ is function decreasing in its every variable, and $g^*_r(\mathbf{y})=c_2\|\mathbf{y}\|^{\alpha+r}$, $\mathbf{y}\in \mathbb{R}_+^{m-r}$. Observe that $g^*_r$ is  a GHK(B) by Definition \ref{Def:class bounded}  on $\mathbb{R}_+^{m-r}$, since $m-r=k-2r$ and
\[
-\frac{k-2r+1}{2}<\alpha+r<-\frac{k-2r}{2}\iff -\frac{k+1}{2}<\alpha<-\frac{k}{2}.
\]
So $g_r^*(\cdot)$ is  a GHK by Remark \ref{Rem:bound prod}, and hence
\[
\int_{\mathbb{R}_+^{m-r}}d\mathbf{y}\int_{\mathbb{R}_+^r}d\mathbf{x}_1 \int_{\mathbb{R}_+^r} d\mathbf{x}_2  g^*(\mathbf{x},\mathbf{y})g^*(\mathbf{x},\mathbf{1}+\mathbf{y}) =\int_{\mathbb{R}_+^{m-r}}  g^*_r(\mathbf{y})g^*_r(\mathbf{1}+\mathbf{y})d\mathbf{y}<\infty.
\]
One can now let $n\rightarrow\infty$ in  (\ref{eq:autocov pass limit}) through the Dominated Convergence Theorem to get
\begin{align*}
\gamma(n)\sim (k-2r)! n^{2\alpha+k}C_{g_r}, \text{ as }n \rightarrow\infty,
\end{align*}
where
\[
C_{g_r}=\int_{\mathbb{R}_+^{m-r}} g_r(\mathbf{x})g_r(\mathbf{1}+\mathbf{x}) d\mathbf{x}>0
\]
 with $g_r$ obtained as in (\ref{eq:g_r}). Since we have assumed (without loss of generality) that $g$ is symmetric,  it does not matter which of the $r$ variables are integrated out. This proves (\ref{eq:lrdj})

Consider now the case $m+r\le k-1$. Again by the assumption of Definition \ref{Def:class bounded} and the boundedness of $L$, $a(\mathbf{i})\le c(i_1+\ldots+i_k)^{\alpha}$ for some $c>0$. Suppose $T=\{t=1,\ldots,m:~ j_t= 0\}$, $\mathbd{\ell}=(l_1,\ldots,l_{m-r})$. Then by applying (\ref{eq:sum power +1}) iteratively, one has for some $C>0$
\[
S'_T|a_\pi(\mathbd{\ell})|\le C (l_1+\ldots+l_{m-r})^{\alpha+r},
\]
where by Definition \ref{Def:GHK},
\begin{equation}\label{eq:beta}
\alpha+r<-\frac{k}{2}+r\le -\frac{m-r}{2}-\frac{1}{2}\le -1,
\end{equation}
since $ m-r\ge 1$ by assumption.

In view of  (\ref{eq:Xpi-j}) and (\ref{eq:acf bound}),
we are left  to show that
\[\sum_{n>0}\sum_{\mathbd{\ell}\in \mathbb{R}_+^{m-l}}'\widetilde{S'_T(|a_\pi|)}(\mathbd{\ell}) \widetilde{S'_T(|a_\pi|)}(\mathbd{\ell}+n\mathbf{1})<\infty.
\]
This can be seen as follows:
\begin{align*}
&\sum_{n=1}^\infty\sum_{0<l_1<\ldots<l_{m-r}<\infty} (l_1+\ldots+l_{m-r})^{\alpha+r}(l_1+\ldots+l_{m-r}+n)_+^{\alpha+r}\\\le C &\sum_{0<l_1<\ldots<l_{m-r}<\infty}(l_1+\ldots+l_{m-r})^{2(\alpha+r)+1}<\infty,
\end{align*}
for some $C>0$,  where we have used  (\ref{eq:sum power +1}) and the fact $2(\alpha+r)+1+m-r< 0$ by (\ref{eq:beta}).
\end{proof}
As a  corollary of Proposition \ref{Pro:basic acf}, we have
\begin{Cor}\label{Cor:full acf}
If $a(\cdot)$ is as given in (\ref{eq:LRD a}), then
\begin{equation}\label{eq:var sum}
\Var\left[ \sum_{n=1}^N X(n)\right]\sim C N^{2H}, \text{ as }n\rightarrow\infty
\end{equation}
for some $C>0$, where $H=\alpha+k/2+1\in (1/2,1)$.
\end{Cor}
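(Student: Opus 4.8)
The plan is to reduce the partial-sum variance to the autocovariance of the centered process $X_c(n)$ and then exploit the off-diagonal decomposition (\ref{eq:X_c}). Since $\E X(n)$ is a constant, $\Var[\sum_{n=1}^N X(n)]=\Var[\sum_{n=1}^N X_c(n)]$. I would first record the elementary Abelian-summation fact that, for a stationary process with autocovariance $\gamma(n)\sim c\,n^{2H-2}$ and $H\in(1/2,1)$ (equivalently $2H-2=2\alpha+k\in(-1,0)$),
\[
\sum_{s,t=1}^N\gamma(s-t)=\sum_{|v|<N}(N-|v|)\gamma(v)\sim\frac{c}{H(2H-1)}\,N^{2H},
\]
as follows by comparison with $\int_{-N}^N(N-|x|)|x|^{2H-2}\,dx$. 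Together with $2H=2\alpha+k+2$, this converts the autocovariance asymptotics of Proposition \ref{Pro:basic acf} into the desired $N^{2H}$ growth, provided the cross-covariances between the summands $X_\pi^{\mathbf{j}}(n)$ of (\ref{eq:X_c}) can be controlled.

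The structural input is an orthogonality lemma. In its reduced form (\ref{eq:Xpi-j}), each $X_\pi^{\mathbf{j}}(n)$ is a sum over distinct indices of products of $p:=m-r$ positive-order Appell polynomials $A_{j_t}(\epsilon_{n-l_t})$; I call $p$ the \emph{reduced order}. I would prove that $\Cov(X_\pi^{\mathbf{j}}(n),X_{\pi'}^{\mathbf{j}'}(0))=0$ whenever the reduced orders differ: by independence of the $\epsilon_v$ the relevant expectation factorizes over distinct noise indices, any index occurring only once contributes $\E A_j(\epsilon)=0$ for $j\ge 1$, so a nonvanishing term forces every index to occur exactly twice, once in each factor, which matches the two index sets bijectively and hence equates the two reduced orders. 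Collecting the summands of (\ref{eq:X_c}) by reduced order into blocks $W_p(n)$, $p=0,\dots,k$, this yields the orthogonal decomposition
\[
\Var\Big[\sum_{n=1}^N X_c(n)\Big]=\sum_{p=0}^{k}\Var\Big[\sum_{n=1}^N W_p(n)\Big],
\]
with all cross terms between distinct $p$ vanishing exactly.

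Within a block I would split $W_p=V_p+U_p$, where $V_p$ gathers the long-range summands (those with $m+r=k$, which by the dichotomy of Proposition \ref{Pro:basic acf} forces $|P_t|\le 2$ for all $t$ and exists precisely when $p\equiv k\pmod 2$, with $r=(k-p)/2$) and $U_p$ the short-range summands (those with $m+r<k$). By Proposition \ref{Pro:basic acf}(ii) each summand of $U_p$ has summable autocovariance, so $\Var[\sum U_p]=O(N)$; hence by Cauchy--Schwarz at the level of partial sums, $|\Cov(\sum V_p,\sum U_p)|\le(\Var[\sum V_p]\,\Var[\sum U_p])^{1/2}=O(N^{H+1/2})=o(N^{2H})$ because $H>1/2$. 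Thus $\Var[\sum W_p]=\Var[\sum V_p]+o(N^{2H})$. For $V_p$, the symmetry reduction in the proof of Proposition \ref{Pro:basic acf}(i) shows its equal long-range summands each have autocovariance $\sim(k-2r)!\,C_{g_r}\,n^{2\alpha+k}$ with $C_{g_r}=\int g_r(\mathbf{x})g_r(\mathbf{1}+\mathbf{x})\,d\mathbf{x}>0$ (positivity by Remark \ref{Rem:int|g(s1-x)|ds<inf} applied to the GHK $g_r$ of Lemma \ref{Lem:int g}), so the Abelian fact yields $\Var[\sum V_p]\sim c_p N^{2H}$ with $c_p>0$; for wrong-parity $p$ we have $V_p=0$ and $\Var[\sum W_p]=O(N)$, i.e. $c_p=0$.

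Summing over $p$ then gives $\Var[\sum_{n=1}^N X(n)]\sim(\sum_p c_p)N^{2H}=C\,N^{2H}$, which is (\ref{eq:var sum}). Strict positivity of $C$ is immediate from the orthogonal decomposition: $C\ge c_k$, and the block $W_k$ is the single fully off-diagonal order-$k$ chaos ($r=0$, all singletons), whose variance grows like $c_k N^{2H}$ with $c_k=k!\,C_{g_0}/(H(2H-1))>0$ since $C_{g_0}=\int g(\mathbf{x})g(\mathbf{1}+\mathbf{x})\,d\mathbf{x}>0$. I expect the main obstacle to be the bookkeeping of the cross-covariances --- establishing the exact orthogonality across reduced orders, and verifying that every cross term involving a short-range summand is genuinely $o(N^{2H})$ rather than contributing at scale $N^{2H}$; this is exactly where the assumption $H>1/2$ enters, through $N^{H+1/2}=o(N^{2H})$.
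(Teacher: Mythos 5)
Your proposal is correct and follows essentially the same route as the paper: decompose $X_c(n)$ via (\ref{eq:X_c}), feed the dichotomy of Proposition \ref{Pro:basic acf} into the standard partial-sum variance asymptotics, use orthogonality across chaos orders, and let the $m+r=k$ terms dominate. Your treatment is simply a more careful version of the paper's terse argument --- in particular, your explicit matching proof of orthogonality across \emph{reduced} orders $m-r$ (rather than across $r$, which alone does not determine the order when $m+r<k$) and the Cauchy--Schwarz bound $O(N^{H+1/2})=o(N^{2H})$ on the long-range/short-range cross terms make rigorous exactly the steps the paper leaves implicit.
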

\begin{proof}
If $\gamma(n)$ is the autocovariance of a stationary process $Y(n)$, then
\[
\Var\left[\sum_{n=1}^N Y(n)\right]=N \sum_{|n|<N} \gamma(n) -\sum_{|n|<N} |n| \gamma(n).
\]
It is well-known  that  if $\sum_{n}|\gamma(n)|<\infty$, then for some constant $c_1>0$, $\Var\left[\sum_{n=1}^N Y(n)\right]\le c_1 N$; if $\gamma(n)\sim c_2n^{2H-2}$ for $H\in(1/2,1)$ and  some constant $c_2>0$, then   $\Var\left[\sum_{n=1}^N Y(n)\right]\sim c_3 N^{2H}$ for  some constant $c_3>0$. Now apply these to $X_{\pi}^{\mathbf{j}}(n)$ in the  decomposition (\ref{eq:X_c}) to the two cases  $m+r<k$ and $m+r=k$ in Proposition \ref{Pro:basic acf} respectively. The variance of the sum of  $X_\pi^{\mathbf{j}}(n)$ with $m+r=k$ dominates those with $m+r<k$. Note that the off-diagonal polynomial forms $X^{\mathbf{j}}_\pi(n)$'s are uncorrelated if they have different values of $r$'s because then they have different orders. In addition, the exponent in (\ref{eq:lrdj}) is $2\alpha+k=2H-2$,  by (\ref{eq:H})  and by  the definition of $\alpha$, one has $H\in(1/2,1)$. Therefore (\ref{eq:var sum}) holds.
\end{proof}
\begin{Rem}
In view of the preceding proof, when $m+r<k$,  $X^\mathbf{j}_\pi (n)$ has a summable autocovariance, which is the typical definition of  \emph{short memory} or \emph{short-range dependence}, while if $m+r=k$, the autocovariance of $X^\mathbf{j}_\pi (n)$ has a hyperbolic decay with a power in $(-1,0)$, which is the typical definition of \emph{long memory} or \emph{long-range dependence}, with a Hurst exponent $H=\alpha+k/2+1$.
\end{Rem}

\section{Central limit theorems for $k$-th order Volterra processes}\label{Sec:CLT}
We establish in this section a central limit theorem for $X(n)$ in (\ref{eq:X(n)})  using the off-diagonal decomposition  (\ref{eq:X_c}).

We state first a lemma concerning a comparison of moments of the off-diagonal discrete  chaos in (\ref{eq:off diagonal poly different noise}), which will be used later to establish tightness in the space $D[0,1]$ with uniform topology in the central limit theorem.
\begin{Lem}\label{Lem:hypercontract}
Suppose that $\{\mathbd{\epsilon}_{i}:=(\epsilon_i^{(1)},\ldots, \epsilon_{i}^{(k)}),i\in \mathbb{Z}\}$ forms an i.i.d.\ sequence of $k$-dimensional vector with mean $\mathbf{0}$, and $\E |\epsilon_{i}^{(j)}|^{r}<\infty$ for some $r>2$, $j=1,\ldots,k$. Suppose $a(\cdot)$ is a function defined on $\mathbb{Z}_+^k$ satisfying $\sum_{\mathbf{i}\in \mathbb{Z}_+^k}'a(\mathbf{i})^2<\infty$, so that
\begin{equation}\label{eq:different noise}
Y=\sum_{0<i_1<\ldots<i_k<\infty}a(i_1,\ldots,i_k)\epsilon_{i_1}^{(1)}\ldots\epsilon_{i_k}^{(k)}
\end{equation}
is well defined. Then for any $p\in (2,r)$, there exists a constant $C$ which doesn't depend on $a(\cdot)$, such that
\begin{equation}\label{eq:hyper contra}
[\E |Y|^p]^{1/p} \le C [\E |Y|^2]^{1/2}.
\end{equation}
\end{Lem}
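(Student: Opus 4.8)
The plan is to prove the inequality by induction on the order $k$, peeling off the largest index $i_k$ to expose a martingale structure and invoking a Rosenthal-type inequality at each stage. Throughout, the constant $C$ may depend on $p$, $r$, $k$ and the law of $\boldsymbol{\epsilon}_i$, but crucially not on $a(\cdot)$. By a truncation argument it suffices to prove the bound for finitely supported $a(\cdot)$ (replace $a$ by its restriction to $\max_l i_l\le N$): the general case then follows from Fatou's lemma applied to an a.s.-convergent subsequence of the truncations, which exists because the truncations converge to $Y$ in $L^2(\Omega)$ by the assumption $\sum_{\mathbf{i}}'a(\mathbf{i})^2<\infty$. For finitely supported $a$ all the moments appearing below are finite, so no circularity arises.

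For the base case $k=1$, I would write $Y=\sum_i a(i)\epsilon_i^{(1)}$ as a sum of independent centered random variables and apply Rosenthal's inequality, which bounds $\E|Y|^p$ by a constant times $(\sum_i a(i)^2\,\E|\epsilon_i^{(1)}|^2)^{p/2}+\sum_i|a(i)|^p\,\E|\epsilon_i^{(1)}|^p$. The first term is exactly $\sigma_1^p(\sum_i a(i)^2)^{p/2}$ with $\sigma_1^2=\E(\epsilon^{(1)})^2$, while the second is handled by the elementary embedding $\ell^2\hookrightarrow\ell^p$ for $p\ge2$, namely $\sum_i|a(i)|^p\le(\sum_i a(i)^2)^{p/2}$. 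Since $\E Y^2=\sigma_1^2\sum_i a(i)^2$, both terms are bounded by a constant times $(\E Y^2)^{p/2}$, giving the claim.

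For the inductive step I would peel off the largest index. Setting $b_j:=\sum_{0<i_1<\ldots<i_{k-1}<j}a(i_1,\ldots,i_{k-1},j)\epsilon_{i_1}^{(1)}\ldots\epsilon_{i_{k-1}}^{(k-1)}$, one has $Y=\sum_j b_j\epsilon_j^{(k)}$. With $\mathcal{F}_N=\sigma(\boldsymbol{\epsilon}_i:i\le N)$, each $b_j$ is $\mathcal{F}_{j-1}$-measurable whereas $\epsilon_j^{(k)}$ is independent of $\mathcal{F}_{j-1}$ and centered, so $d_j:=b_j\epsilon_j^{(k)}$ is a martingale difference sequence. The martingale (Burkholder) form of Rosenthal's inequality then gives
\[
\E|Y|^p\le C\Big[\E\Big(\sum_j \E[d_j^2\mid\mathcal{F}_{j-1}]\Big)^{p/2}+\E\sum_j|d_j|^p\Big],
\]
where $\E[d_j^2\mid\mathcal{F}_{j-1}]=\sigma_k^2 b_j^2$ with $\sigma_k^2=\E(\epsilon^{(k)})^2$, and $\E|d_j|^p=\E|b_j|^p\,\E|\epsilon^{(k)}|^p$ by independence (finite since $p<r$).

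Both resulting terms are controlled by the inductive hypothesis applied to the order-$(k-1)$ chaos $b_j$, with coefficient $(i_1,\ldots,i_{k-1})\mapsto a(i_1,\ldots,i_{k-1},j)$, yielding $\|b_j\|_p\le C\|b_j\|_2$ with a constant uniform in $j$. Writing $q_j:=\sum_{i_1<\ldots<i_{k-1}<j}a(i_1,\ldots,i_{k-1},j)^2$, so that $\E b_j^2$ is a fixed multiple of $q_j$ and $\sum_j q_j=\sum_{i_1<\ldots<i_k}a(\mathbf{i})^2$ is a fixed multiple of $\E Y^2$, the second term obeys $\sum_j\E|b_j|^p\le C\sum_j q_j^{p/2}\le C(\sum_j q_j)^{p/2}$, the last step using $\sum_j q_j^{p/2}\le(\sum_j q_j)^{p/2}$ (valid since $p/2\ge1$ and $q_j\ge0$). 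The first term is the crux: rather than seeking genuine hypercontractivity for the random sum $\sum_j b_j^2$, I would apply Minkowski's inequality in $L^{p/2}(\Omega)$ (legitimate as $p/2\ge1$) to get $\|\sum_j b_j^2\|_{p/2}\le\sum_j\|b_j^2\|_{p/2}=\sum_j\|b_j\|_p^2$, and then bound each $\|b_j\|_p^2\le C\|b_j\|_2^2$ by the inductive hypothesis, so that $\E(\sum_j b_j^2)^{p/2}=\|\sum_j b_j^2\|_{p/2}^{p/2}\le C(\sum_j q_j)^{p/2}$. Combining the two estimates yields $\E|Y|^p\le C(\E Y^2)^{p/2}$, completing the induction. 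The main obstacle is precisely this first term, and the decisive observation is that the $L^{p/2}$-triangle inequality reduces it to the very same order-$(k-1)$ estimate, so no new argument beyond the induction is needed.
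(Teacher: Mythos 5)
Your proof is correct and follows essentially the same route as the paper's: induction on the order $k$, peeling off the largest index to expose a martingale difference structure, a Burkholder-type moment inequality, and Minkowski's inequality in $L^{p/2}$ to reduce everything to the order-$(k-1)$ estimate. The only differences are cosmetic: you invoke the two-term Burkholder--Rosenthal inequality and must therefore separately control the term $\sum_j \E|d_j|^p$ (the paper avoids this by using the plain square-function inequality $\|\sum_i X_i\|_p\le C_p\|(\sum_i X_i^2)^{1/2}\|_p$ followed by the same Minkowski step), and you spell out the truncation/Fatou passage from finitely supported to general square-summable $a(\cdot)$, which the paper treats as routine.
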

 Lemma 4.3 of \citet{krakowiak:szulga:1986:random} yields (\ref{eq:hyper contra})   when $\epsilon_i^{(1)}=\ldots=\epsilon_i^{(k)}=\epsilon_i$ and $a(\mathbf{i})=a(\mathbf{i})1_{\{\mathbf{i}\le n\mathbf{1}\}}$ for some $n\ge k$, and it is extended  straightforwardly to the case $\sum_{0<i_1<\ldots<i_k<\infty} a(i_1,\ldots,i_k)^2<\infty$ in \citet{bai:taqqu:2013:generalized}. The proof, which develops a martingale structure for
 $$\Big\{X_n:=\sum_{0<i_1<\ldots<i_k\le n}a(i_1,\ldots,i_k)\epsilon_{i_1}\ldots\epsilon_{i_k}, n\ge k\Big\}$$
and uses the square function inequality (Theorem 3.2 of \citet{burkholder:1973:distribution}),  needs   to be  modified to allow non-identical components in $\mathbd{\epsilon}_i$ as in the preceding lemma. We include a proof in Section \ref{Sec:Extension hyper} for completeness.

\begin{Thm}\label{Thm:CLT}
Suppose that the  coefficient $a(\cdot)$ defining the Volterra process $X(n)$ in (\ref{eq:X(n)}) satisfies the assumptions in Proposition \ref{Pro:diag well def}. Suppose also that  for any $\pi=\{P_1,\ldots,P_{|\pi|}\} \in \mathcal{P}_k$,
\begin{equation}\label{eq:acf diag 1 cond}
\sum_{n=1}^\infty\sum_{\mathbf{i}\in \mathbb{Z}_+^{|\pi|}}' \widetilde{|a_\pi|}(\mathbf{i})\widetilde{|a_\pi|}(\mathbf{i}+n\mathbf{1})<\infty,
\end{equation}
where $\sim$ stands for symmetrization,
and that for every
$T\subset \{1,\ldots,|\pi|\}$, $|T|<|\pi|$,   satisfying $|P_t|\ge 2$ for all $t\in T$, we have
\begin{equation}\label{eq:acf diag 2 cond}
\sum_{n=1}^\infty\sum'_{\mathbf{i}\in\mathbb{Z}_+^{|\pi|-|T|}} \widetilde{(S'_T |a_\pi|)}(\mathbf{i})\widetilde{(S'_T|a_\pi|)}(\mathbf{i}+n\mathbf{1})<\infty.
\end{equation}
Then if in addition $\sigma^2:=\sum_n\gamma(n):=\sum_{n}\Cov(X(n),X(0))>0$, we have $\sigma^2<\infty$ and
\begin{equation}\label{eq:CLT fdd}
\frac{1}{\sqrt{N}}\sum_{n=1}^{[Nt]} [X(n)-\E X(n)] \ConvFDD \sigma B(t),
\end{equation}
where $B(t)$ is a standard Brownian motion.

If in addition, the noise $\{\epsilon_i\}$ defining $X(n)$ satisfies $\E\epsilon_i^{2k+\delta}<\infty$ for some $\delta>0$, then $\ConvFDD$  in (\ref{eq:CLT fdd}) can be replaced by the weak convergence in $D[0,1]$ with uniform topology.
\end{Thm}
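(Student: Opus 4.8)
The plan is to use the off-diagonal decomposition $X_c(n)=\sum_{\pi\in\mathcal P_k}\sum_{\mathbf j\in J_\pi^+}X_\pi^{\mathbf j}(n)$ from (\ref{eq:X_c}), which represents $X_c(n)$ as a \emph{finite} sum of off-diagonal discrete chaoses $X_\pi^{\mathbf j}(n)$ of the form (\ref{eq:off-diagonal chaos general}). The role of the hypotheses is the following: via the autocovariance bound (\ref{eq:acf bound}), conditions (\ref{eq:acf diag 1 cond}) and (\ref{eq:acf diag 2 cond}) say exactly that every component $X_\pi^{\mathbf j}(n)$ — whose coefficient is $a_\pi$ when all $j_t\ge1$, and $S'_T a_\pi$ with $T=\{t:j_t=0\}$ otherwise — has an \emph{absolutely summable} autocovariance. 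I would then split the argument into (a) the variance computation giving $\sigma^2<\infty$, (b) finite-dimensional convergence by $m$-dependent approximation, and (c) tightness via hypercontractivity.

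First, for (a), I note that two components of different chaos order $|\pi|-|T|$ are uncorrelated: for i.i.d.\ noise the expectation of a product of Appell polynomials over distinct indices vanishes unless the index sets of the two factors match perfectly, which forces equal orders. For equal-order components the lag-$n$ cross-covariance is bounded, as in (\ref{eq:acf bound}), by a diagonal-shift sum $\sum_{\mathbf i}'\widetilde{|b|}(\mathbf i)\widetilde{|b'|}(\mathbf i+n\mathbf 1)$ of the two coefficient functions $b,b'$; since the bilinear form $\sum_{n,\mathbf i}\widetilde{|b|}(\mathbf i)\widetilde{|b'|}(\mathbf i+n\mathbf 1)$ is an $\ell^2$ inner product of the diagonal sums of $\widetilde{|b|}$ and $\widetilde{|b'|}$, Cauchy--Schwarz and (\ref{eq:acf diag 1 cond})--(\ref{eq:acf diag 2 cond}) make $\sum_n$ of these cross-covariances finite as well. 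Hence $\sum_n|\gamma(n)|<\infty$, so $\sigma^2=\sum_n\gamma(n)<\infty$, and the elementary identity used in the proof of Corollary \ref{Cor:full acf} gives $N^{-1}\Var[\sum_{n=1}^{[Nt]}X_c(n)]\to t\sigma^2$.

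Next, for (b), I truncate: set $a^{(M)}(\mathbf i)=a(\mathbf i)\mathrm 1_{\{\mathbf i\le M\mathbf 1\}}$ and let $X^{(M)}_c(n)$ be the associated centered process, which depends only on $\epsilon_{n-M},\dots,\epsilon_{n-1}$ and so forms a stationary $M$-dependent sequence. The classical invariance principle for $m$-dependent stationary sequences gives $N^{-1/2}\sum_{n=1}^{[Nt]}X^{(M)}_c(n)\ConvFDD\sigma_M B(t)$ with $\sigma_M^2=\sum_n\gamma^{(M)}(n)$. Rerunning (a) on the tail coefficient $a-a^{(M)}$, which is supported on $\{\max_j i_j>M\}$, and invoking dominated convergence against the finite sums in (\ref{eq:acf diag 1 cond})--(\ref{eq:acf diag 2 cond}), I obtain $\sigma_M^2\to\sigma^2$ together with
\[
\lim_{M\to\infty}\,\limsup_{N\to\infty}\,\frac1N\,\Var\Big[\sum_{n=1}^{N}\big(X_c(n)-X^{(M)}_c(n)\big)\Big]=0.
\]
A standard approximation theorem for convergence in distribution, combined with the Cram\'er--Wold device applied to the increments, then upgrades these to (\ref{eq:CLT fdd}).

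Finally, for (c), under $\E\epsilon_i^{2k+\delta}<\infty$ I choose $r=2+\delta/k>2$, so that each Appell noise $A_{j_t}(\epsilon_i)$, being a polynomial of degree $j_t\le k$, satisfies $\E|A_{j_t}(\epsilon_i)|^{r}\le C\,\E|\epsilon_i|^{kr}=C\,\E|\epsilon_i|^{2k+\delta}<\infty$. Writing $\sum_{n=a+1}^{b}X_\pi^{\mathbf j}(n)$ as a single off-diagonal chaos in the noises $A_{j_t}(\epsilon_i)$ and applying Lemma \ref{Lem:hypercontract} for a fixed $p\in(2,r)$ to each component, the triangle inequality in $L^p$ and the variance bound from (a) give, for the centered partial sums,
\[
\Big\|\textstyle\sum_{n=a+1}^{b}X_c(n)\Big\|_p\le C\,\Big\|\textstyle\sum_{n=a+1}^{b}X_c(n)\Big\|_2\le C'(b-a)^{1/2}.
\]
Hence $\E|S_N(t)-S_N(s)|^{p}\le C''|t-s|^{p/2}$ with $p/2>1$ for $t-s\ge1/N$, the classical moment criterion for tightness in $D[0,1]$; as the limit $\sigma B$ is continuous, this yields weak convergence in the uniform topology. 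The hard part will be the bookkeeping in (a)--(b): verifying that after the partition-and-$S'_T$ decomposition \emph{all} component autocovariances and their equal-order cross-covariances are dominated by the sums in (\ref{eq:acf diag 1 cond})--(\ref{eq:acf diag 2 cond}), and that their $M$-tails vanish uniformly in $N$. Once that is in place, the $m$-dependent CLT and the hypercontractive tightness bound are routine.
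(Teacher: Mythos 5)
Your proposal is correct and follows essentially the same route as the paper's own proof: the off-diagonal decomposition (\ref{eq:X_c}), absolute summability of the auto- and cross-covariances of the $X_\pi^{\mathbf{j}}(n)$'s deduced from (\ref{eq:acf diag 1 cond})--(\ref{eq:acf diag 2 cond}), a truncation to $m$-dependent sequences for the finite-dimensional convergence, and the hypercontractivity bound (\ref{eq:hyper contra}) for tightness in $D[0,1]$. The paper compresses all of this into a reference to Theorem 6.14 of \citet{bai:taqqu:2013:generalized} extended to the non-identically distributed Appell-polynomial noises, whereas you fill in the details (notably the Cauchy--Schwarz argument for equal-order cross-covariances), but the architecture is identical.
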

\begin{proof}
In (\ref{eq:X_c}), $X(n)-\E X(n)$ is expressed as a finite sum of off-diagonal terms $X_\pi^{\mathbf{j}}(n)$ given in (\ref{eq:X_pi^j(n)}). This is, however, similar to Theorem 6.14 of \citet{bai:taqqu:2013:generalized} by noting that (\ref{eq:acf diag 1 cond})  and (\ref{eq:acf diag 2 cond})  are essentially the same as the SRD condition in Definition 6.1 of \citet{bai:taqqu:2013:generalized}. The  only difference is the presence of  non-identically distributed noises since here,  Appell polynomials $A_j(\epsilon_i)$ of different orders  are  involved.  This extension is easy to include. We thus omit the details but  mention just the following two points: the relations (\ref{eq:acf diag 1 cond}) and (\ref{eq:acf diag 2 cond}) imply that the auto(cross-)covariances of   $X_\pi^\mathbf{j}(n)$'s are absolutely summable, in particular, $\sigma^2<\infty$. The proof of the convergence in finite-dimensional distributions uses a truncation argument to reduce the   $X_{\pi}^{\mathbf{j}}(n)$'s to  $m$-dependent sequences.  The tightness in $D[0,1]$ can be established with the help of (\ref{eq:hyper contra}).
\end{proof}

We will now state a more practical condition  than (\ref{eq:acf diag 1 cond}) and (\ref{eq:acf diag 2 cond}):
\begin{Pro}\label{Pro:sufficient cond diag CLT}
Relation (\ref{eq:acf diag 1 cond}) and (\ref{eq:acf diag 2 cond}) hold, if
\begin{equation}
|a(i_1,\ldots,i_k)|\le C i_1^{\gamma_1}\ldots i_k^{\gamma_k}
\end{equation}
where  $C>0$ is some constant and each $\gamma_j<-1$.
\end{Pro}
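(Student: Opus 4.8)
The plan is to follow the proof of Proposition~\ref{Pro:volterra (B) well-defined}, now carrying along the extra summation over the lag $n$ demanded by (\ref{eq:acf diag 1 cond}) and (\ref{eq:acf diag 2 cond}). First I would reduce the hypothesis to a single exponent: since each coordinate $i_j\ge 1$ and each $\gamma_j<-1$, setting $\gamma:=\max_j\gamma_j<-1$ gives $i_j^{\gamma_j}\le i_j^{\gamma}$, so $|a(\mathbf{i})|\le C\prod_{j=1}^k i_j^{\gamma}$. For a partition $\pi=\{P_1,\dots,P_m\}$ with $m=|\pi|$, identifying the variables according to $\pi$ yields $|a_\pi(\mathbf{i})|\le C\prod_{t=1}^m i_t^{\beta_t}$ with $\beta_t=\gamma|P_t|$; since $|P_t|\ge 1$ and $\gamma<0$, each $\beta_t\le\gamma<-1$. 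The one bookkeeping point to verify is that this product-power bound survives symmetrization: for each permutation $\sigma$ we have $\prod_t i_{\sigma(t)}^{\beta_t}\le\prod_t i_{\sigma(t)}^{\gamma}=\prod_t i_t^{\gamma}$ (using $\beta_t\le\gamma$ and $i_t\ge 1$), so averaging over $\sigma$ gives $\widetilde{|a_\pi|}(\mathbf{i})\le C\prod_{t=1}^m i_t^{\gamma}$.

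Next I would establish (\ref{eq:acf diag 1 cond}). Bounding both symmetrized factors and discarding the off-diagonal restriction on the (nonnegative) summand, it suffices to control $\sum_{n\ge 1}\sum_{\mathbf{i}\in\mathbb{Z}_+^m}\prod_{t=1}^m i_t^{\gamma}(i_t+n)^{\gamma}$. The inner sum factors across $t$, and for each factor I would use the elementary estimate $\sum_{i=1}^\infty i^{\gamma}(i+n)^{\gamma}\le n^{\gamma}\sum_{i=1}^\infty i^{\gamma}=C\,n^{\gamma}$, valid because $\gamma<0$ forces $(i+n)^{\gamma}\le n^{\gamma}$ while $\gamma<-1$ makes $\sum_i i^{\gamma}$ finite. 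Hence the inner sum is at most $C^m n^{m\gamma}$, and the outer sum $\sum_{n\ge 1} n^{m\gamma}$ converges since $m\gamma\le\gamma<-1$. (Alternatively one may sum over $n$ first via (\ref{eq:sum power +1}).)

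Finally I would treat (\ref{eq:acf diag 2 cond}). Here the decisive use of the hypothesis is that every $t\in T$ satisfies $|P_t|\ge 2$, so $\beta_t=\gamma|P_t|\le 2\gamma<-2$, and therefore $\sum_{i_t=1}^\infty i_t^{\beta_t}<\infty$. Bounding the off-diagonal sum $S'_T|a_\pi|$ by the full sum over the $T$-variables and factoring, one obtains $S'_T|a_\pi|(\mathbf{i})\le C'\prod_{s\notin T} i_s^{\beta_s}\le C'\prod_{s\notin T} i_s^{\gamma}$, a product-power bound in the remaining $m-|T|\ge 1$ variables, which symmetrization again preserves as above. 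Applying the estimate of the previous paragraph verbatim, with $m-|T|$ variables in place of $m$, yields a bound $C''\sum_{n\ge 1} n^{(m-|T|)\gamma}<\infty$, since $(m-|T|)\gamma\le\gamma<-1$.

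I do not anticipate a genuine obstacle; the only points requiring care are that neither symmetrization nor the partial off-diagonal summation $S'_T$ destroys the product-power-law bound, and that the hypothesis $|P_t|\ge 2$ for $t\in T$ is precisely what renders the summed-out exponents $\beta_t<-2$ summable, while $|T|<|\pi|$ guarantees that at least one free variable remains so the final sum over $n$ converges.
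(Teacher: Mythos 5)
Your proof is correct and follows essentially the same route as the paper's: reduce to the pure product power-law bound, exploit separability of the product across coordinates, and finish with elementary power-sum estimates (the paper invokes $\sum_{i>0}\sum_{n>0}i^a(i+n)^b<\infty$ for $a,b<-1$, while you equivalently extract $n^{\gamma}$ decay from each factor). The paper's proof is a two-line sketch; your version merely spells out the bookkeeping it leaves implicit, namely that symmetrization and the partial summation $S'_T$ preserve the product bound, and that $|P_t|\ge 2$ for $t\in T$ together with $|T|<|\pi|$ make the summed-out and remaining exponents behave as needed.
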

\begin{proof}
It suffices to show (\ref{eq:acf diag 1 cond}) and (\ref{eq:acf diag 2 cond}) for $a(\mathbf{i})=i_1^{\gamma_1}\ldots i_k^{\gamma_k}$, which is easily checked by the separability of the product and that $\sum_{i>0}\sum_{n>0}i^a(i+n)^b<\infty $ for any $a,b<-1$.
\end{proof}

In contrast, if $X'(n)$ is  the  discrete chaos process as defined in (\ref{eq:off-diagonal chaos general}),
the central limit theorem holds for this process under  weaker assumptions, namely, $\gamma_j<-1/2$ and $\sum_{j=1}^k\gamma_j<-k/2-1/2$ instead of $\gamma_j<-1$. Indeed:
\begin{Pro}\label{Pro:SRD sufficient off-diagonal}
Let $X'(n)$ be given as in (\ref{eq:off-diagonal chaos general}), with $a(\cdot)$ satisfying the following:
\begin{equation}\label{eq:a bound}
|a(i_1,\ldots,i_k)|\le C i_1^{\gamma_1}\ldots i_{k}^{\gamma_k},
\end{equation}
where  $C$ is a positive constant and each $\gamma_j<-1/2$, and $\sum_{j=1}^k\gamma_j<-k/2-1/2$. Then the autocovariance $\gamma(n)$ of $X'(n)$ is absolutely summable. If  in addition $\sigma^2:=\sum_{n=-\infty}^\infty\Cov(X'(n),X'(0))>0$, then $X'(n)$ satisfies the central limit theorem (\ref{eq:CLT fdd}). If a moment higher than $2$ of each $\epsilon_i^{(1)},\ldots,\epsilon_{i}^{(k)}$ exists, then (\ref{eq:CLT fdd}) holds with $\ConvFDD$ replaced by weak convergence $\Rightarrow$ in $D[0,1]$.

The above $\ConvFDD$ or $\Rightarrow$ convergence  also holds for a linear combination of different $X'(n)$'s defined using a common i.i.d.\ noise vector $\mathbd{\epsilon}_i$, where the different $X'(n)$'s in the linear combination can have different orders and  involve subvectors of $\mathbd{\epsilon}_i$, provided that each $X'(n)$ satisfies the  conditions mentioned above.
\end{Pro}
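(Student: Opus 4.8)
The plan is to prove the three assertions of the proposition in turn---absolute summability of the autocovariance, the finite-dimensional central limit theorem, and its functional upgrade in $D[0,1]$---and then to obtain the joint statement by the Cram\'er--Wold device. I first establish $\sum_n|\gamma(n)|<\infty$. Starting from the autocovariance bound (\ref{eq:acf bound}), the claim reduces to the finiteness of $\sum_{n\ge 0}\sum_{\mathbf{i}}'\widetilde{|a|}(\mathbf{i})\widetilde{|a|}(\mathbf{i}+n\mathbf{1})$. Since $\widetilde{|a|}$ is, up to a constant, a finite sum over permutations $\sigma$ of the products $\prod_{j} i_j^{\gamma_{\sigma(j)}}$, it suffices to bound each term $\sum_{n\ge 0}\sum_{\mathbf{i}>\mathbf{0}}\prod_{j=1}^k i_j^{\gamma_{\sigma(j)}}(i_j+n)^{\gamma_{\tau(j)}}$, which factorizes across coordinates. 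Evaluating the one-dimensional inner sums $\sum_{i>0}i^{a}(i+n)^{b}$ with $a,b<-1/2$ by means of the elementary asymptotics (\ref{eq:int power +1}) and (\ref{eq:sum power +1}), a power count shows that the resulting bound on $\gamma(n)$ decays faster than $n^{-1}$ precisely in the regime $\gamma_j<-1/2$ and $\sum_{j}\gamma_j<-k/2-1/2$; the borderline (slowest) case is when every $\gamma_j\in(-1,-1/2)$, where $\gamma(n)=O(n^{2\sum_j\gamma_j+k})$ with $2\sum_j\gamma_j+k<-1$. Summability follows.

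For the convergence (\ref{eq:CLT fdd}) of the finite-dimensional distributions I would use the classical truncation and $m$-dependence argument. Replacing $a(\cdot)$ by its truncation $a(\mathbf{i})\mathrm{1}_{\{\mathbf{i}\le M\mathbf{1}\}}$ produces a process $X'_M(n)$ that depends only on $\epsilon_{n-1},\dots,\epsilon_{n-M}$ and is therefore $M$-dependent; the $m$-dependent central limit theorem (\citet{billingsley:1956:invariance}) gives (\ref{eq:CLT fdd}) for $X'_M$ with variance $\sigma_M^2=\sum_n\gamma_M(n)$. The absolute summability just established, applied uniformly in $M$, shows $\sigma_M^2\to\sigma^2$ and that the tail contribution $\Var[N^{-1/2}\sum_{n\le N}(X'(n)-X'_M(n))]$ is small uniformly in $N$ as $M\to\infty$. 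A standard approximation lemma then transfers the f.d.d.\ convergence from $X'_M$ to $X'$; only finiteness of the second moment is needed here, consistent with the hypotheses.

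For the functional statement I would establish tightness through the hypercontractivity estimate (\ref{eq:hyper contra}). Writing an increment $N^{-1/2}\sum_{n=[Ns]+1}^{[Nt]}[X'(n)-\E X'(n)]$ as an off-diagonal chaos in $\mathbd{\epsilon}_i$ (after collecting terms by the shifted index $n-i_j$, the off-diagonal constraint $i_p\neq i_q$ being independent of $n$), Lemma \ref{Lem:hypercontract} bounds its $p$-th moment, $p\in(2,r)$, by a constant times its second moment to the power $p/2$; the short-memory bound from the first step controls that second moment by $C(t-s)N$, yielding $\E|S_N(t)-S_N(s)|^p\le C(t-s)^{p/2}$ with $p/2>1$. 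Since the limit $\sigma B(t)$ is continuous, the standard moment criterion then gives weak convergence in $D[0,1]$ with the uniform topology.

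Finally, the joint statement for a linear combination $\sum_\ell c_\ell X'_\ell(n)$ follows from the Cram\'er--Wold device for the f.d.d.\ part---the truncated components are jointly $m$-dependent and their cross-covariances are summable by the same power count as in the first step---while tightness follows by applying Minkowski's inequality and (\ref{eq:hyper contra}) to each summand, so that the $p$-th moment of an increment of the combination is again $O(((t-s)N)^{1/2})$. The step I expect to be the main obstacle is the tightness estimate: producing a moment bound on increments that is simultaneously uniform in $N$ and of the correct order in $(t-s)$. This is exactly what forces the extra moment assumption $\E|\epsilon_i^{(j)}|^{r}<\infty$ with $r>2$ and is the reason the hypercontractivity inequality of Lemma \ref{Lem:hypercontract} is needed; the remaining verifications are routine power counts and appeals to the $m$-dependent central limit theorem.
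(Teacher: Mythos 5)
Your proposal is correct and follows essentially the same route as the paper: the paper likewise reduces everything via (\ref{eq:acf bound}) to the summability of $\sum_{n}\sum_{\mathbf{i}}'\widetilde{|a|}(\mathbf{i})\widetilde{|a|}(\mathbf{i}+n\mathbf{1})$, factorizes the product bound coordinatewise, and power-counts the one-dimensional sums (getting $r_j(n)\sim \mathrm{B}(\gamma_{\sigma(j)}+1,-1-\gamma_j-\gamma_{\sigma(j)})\,n^{\gamma_j+\gamma_{\sigma(j)}+1}$ and $2\sum_j\gamma_j+k<-1$), exactly as you do. The only difference is presentational: the paper then simply invokes the extension of Theorem 6.14 of \citet{bai:taqqu:2013:generalized} described in the proof of Theorem \ref{Thm:CLT}, whereas you spell out what that citation contains --- the truncation/$m$-dependence argument for the finite-dimensional convergence and the hypercontractivity bound (\ref{eq:hyper contra}) for tightness.
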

\begin{proof}
In view of the relation (\ref{eq:acf bound}) and the extension of Theorem 6.14 in \citet{bai:taqqu:2013:generalized}  mentioned in the proof of Theorem \ref{Thm:CLT}, we only need to show that
$$
\sum_{n=1}^\infty\sum'_{\mathbf{i}>\mathbd{0}} \widetilde{|a|}(\mathbf{i}+n\mathbf{1}) \widetilde{|a|}(\mathbf{i})<\infty.
$$
In view of the bound (\ref{eq:a bound}), this holds if
\begin{equation}\label{eq:show a suff}
\sum_{n=1}^\infty\sum_{\mathbf{i}>\mathbd{0}} (i_1+n)^{\gamma_1}\ldots (i_k+n)^{\gamma_k}i_1^{\gamma_{\sigma(1)}}\ldots i_{k}^{\gamma_{\sigma(k)}}=\sum_{n=1}^\infty r_{1}(n)\ldots r_{k}(n)<\infty,
\end{equation}
where $\sigma$ is any permutation of $\{1,\ldots,k\}$, $r_j(n)=\sum_{i=1}^\infty(i+n)^{\gamma_j}i^{\gamma_{\sigma(j)}}$. Without loss of generality, we may assume that $-1<\gamma_j<-1/2$.  In this case, using the fact $\int_0^\infty x^{a}(1+x)^b dx= B(a+1,-1-a-b)$ for $a,b\in (-1,-1/2)$, where $\mathrm{B}(\cdot,\cdot)$ is the beta function, and  an integral approximation, one gets
$$r_j(n)\sim \mathrm{B}(\gamma_{\sigma(j)}+1,-1-\gamma_j-\gamma_{\sigma(j)}) n^{\gamma_{j}+\gamma_{\sigma(j)}+1}$$
as $n\rightarrow\infty$. But $\sum_{j=1}^k (2\gamma_j+1)<-k-1+k=-1$ by assumption.  So (\ref{eq:show a suff}) holds.
\end{proof}
\begin{Eg}
To illustrate the statement about linear combinations, let $\{\mathbd{\epsilon}_{i}:=(\epsilon_i^{(1)},\ldots, \epsilon_{i}^{(k)}),i\in \mathbb{Z}\}$ be an i.i.d.\ sequence as in Lemma \ref{Lem:hypercontract}. Suppose that $\{j_1,\ldots,j_{k_1}\}$ and $\{l_1,\ldots,l_{k_2}\}$ are two subsets of $\{1,\ldots,k\}$. Then Proposition \ref{Pro:SRD sufficient off-diagonal} applies to $X'_1(n)+X'_2(n)$, where
\begin{align*}
X_1'(n)&=\sum'_{0<i_1,\ldots,i_{k_1}<\infty} a^{(1)}(i_1,\ldots,i_{k_1})\epsilon_{n-i_1}^{(j_1)}\ldots\epsilon_{n-i_{k_1}}^{(j_{k_1})}\\
X_2'(n)&=\sum'_{0<i_1,\ldots,i_{k_2}<\infty} a^{(2)}(i_1,\ldots,i_{k_2})\epsilon_{n-i_1}^{(l_1)}\ldots\epsilon_{n-i_{k_2}}^{(l_{k_2})},
\end{align*}
where $a^{(1)}$ and $a^{(2)}$ satisfy the conditions of Proposition \ref{Pro:SRD sufficient off-diagonal} with $k$ replaced by $k_1$ and $k_2$ respectively.
\end{Eg}

\section{Non-central limit theorem for $k$-th order Volterra processes}\label{Sec:NCLT}
The non-central limit theorem (NCLT) builds on  a result concerning convergence of  a discrete chaos to a Wiener chaos. Let $h$ be a function defined in $\mathbb{Z}^k$ such that $\sum'_{\mathbf{i}\in \mathbb{Z}^k} h(\mathbf{i})^2<\infty$, where $'$ indicates the exclusion of the diagonals $i_p=i_q$, $p\neq q$.
Let $Q_k(h)$ be defined as follows:
\begin{align}\label{eq:Q_k(h)}
Q_k(h)=Q_k(h,\mathbd{\epsilon})=\sum'_{(i_1,\ldots,i_k)\in \mathbb{Z}^k} h(i_1,\ldots,i_k) \epsilon_{i_1}\ldots \epsilon_{i_k}=\sum'_{\mathbf{i}\in \mathbb{Z}^k} h(\mathbf{i})\prod_{p=1}^k \epsilon_{i_p},
\end{align}
where $\epsilon_i$'s are i.i.d.\ noise. It is easy to see that switching the arguments of $h(i_1,\ldots,i_k)$, does not change $Q_k(h)$. So if $\tilde{h}$ is the symmetrization of $h$, then $Q_k(h)=Q_k(\tilde{h})$.

Suppose now that  we have  a sequence of function vectors $\mathbf{h}_n=(h_{1,n},\ldots,h_{j,n})$ where each $h_{j,n}\in L^2(\mathbb{Z}^{k_j})$, $j=1,\ldots,J$.
\begin{Pro}\label{Pro:Poly->Wiener}(Proposition 4.1 of \citet{bai:taqqu:2013:generalized})
Let
 $$
\tilde{h}_{j,n}(\mathbf{x})=n^{k_j/2}h_{j,n}\left([n\mathbf{x}]+\mathbf{c}_j\right), \quad j=1,\ldots,J,
 $$
 where  $\mathbf{c}_j\in \mathbb{Z}^k$. Suppose that there exists $h_j\in L^2(\mathbb{R}^{k_j})$, such that
\begin{equation}\label{eq:htilde h L2 conv}
\|\tilde{h}_{j,n}-h_j\|_{L^2(\mathbb{R}^{k_j})}\rightarrow 0
\end{equation}
as $n\rightarrow\infty$.  Then, as $n\rightarrow\infty$,
\begin{align*}
\mathbf{Q}:=\Big(Q_{k_1}(h_{1,n}),\ldots,Q_{k_J}(h_{J,n})\Big) \ConvD \mathbf{I}:=\Big(I_{k_1}(h_1),\ldots,I_{k_J}(h_J)\Big),
\end{align*}
where the multiple Wiener-It\^o integrals $I_{k_j}(\cdot)$'s are defined using the same Brownian random measure.
\end{Pro}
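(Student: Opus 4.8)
The plan is to represent both the discrete chaos $\mathbf{Q}$ and the Wiener chaos $\mathbf{I}$ as off-diagonal multiple integrals against a single family of random measures, and then to transport the discrete objects to the continuous ones by an invariance principle. First I would reduce to symmetric kernels: permuting the arguments of $h_{j,n}$ leaves $Q_{k_j}(h_{j,n})$ unchanged and $I_{k_j}(h_j)=I_{k_j}(\tilde h_j)$, while symmetrization is an $L^2$-contraction commuting with the dilation $\mathbf{x}\mapsto[n\mathbf{x}]+\mathbf{c}_j$, so (\ref{eq:htilde h L2 conv}) survives symmetrization; hence I assume all kernels symmetric. Next I embed: define the atomic random signed measure $B_n$ on $\mathbb{R}$ that places mass $n^{-1/2}\epsilon_i$ at the point $i/n$. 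Because $\E\epsilon_i=0$ and $\E\epsilon_i^2=1$, the off-diagonal discrete chaos obeys the exact isometry $\E[Q_k(f)Q_k(g)]=k!\,\langle\tilde f,\tilde g\rangle_{\ell^2}$, and for a step kernel $\phi$ constant on the cubes $\prod_p[i_p/n,(i_p+1)/n)$ one checks directly that $\int'_{\mathbb{R}^{k_j}}\phi\,dB_n^{\otimes k_j}$ reproduces such a discrete chaos; in particular, with $\phi=\tilde h_{j,n}$ and after the harmless index shift by $\mathbf{c}_j$, $\int'_{\mathbb{R}^{k_j}}\tilde h_{j,n}\,dB_n^{\otimes k_j}\EqD Q_{k_j}(h_{j,n})$, and the discrete isometry becomes $\E[(\int'\phi\,dB_n^{\otimes k_j})(\int'\psi\,dB_n^{\otimes k_j})]=k_j!\,\langle\tilde\phi,\tilde\psi\rangle_{L^2}$, matching the Wiener-It\^o isometry in the limit.

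I would then run a single approximation scheme. Fix $\delta>0$ and choose a vector of simple kernels $\mathbf{s}=(s_1,\ldots,s_J)$, each $s_j$ a finite linear combination of tensor products of indicators of disjoint bounded intervals with common rational endpoints, such that $\|s_j-h_j\|_{L^2}<\delta$ for all $j$. Once $n$ is large enough that the $1/n$-grid refines all the interval endpoints, each $\int'_{\mathbb{R}^{k_j}}s_j\,dB_n^{\otimes k_j}$ is a \emph{fixed} polynomial in the normalized increments $n^{-1/2}\sum_{i/n\in[a,b)}\epsilon_i$, so Donsker's theorem together with the continuous mapping theorem gives, jointly in $j$ (the same measure $B_n$ and the same Brownian measure $B$ being used throughout), $(\int'_{\mathbb{R}^{k_j}}s_j\,dB_n^{\otimes k_j})_j\ConvD(I_{k_j}(s_j))_j$. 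The two isometries now glue the approximation to the target: uniformly in $n$,
$$\E\Big[\Big(\int'_{\mathbb{R}^{k_j}}(\tilde h_{j,n}-s_j)\,dB_n^{\otimes k_j}\Big)^2\Big]=k_j!\,\|\widetilde{\tilde h_{j,n}-s_j}\|_{L^2}^2\le k_j!\,\|\tilde h_{j,n}-s_j\|_{L^2}^2,$$
whose $\limsup_n$ is at most $k_j!\,\delta^2$ by (\ref{eq:htilde h L2 conv}), while $\|I_{k_j}(h_j)-I_{k_j}(s_j)\|_{L^2}\le\sqrt{k_j!}\,\delta$. Letting $\delta\to0$ in the standard approximation lemma for convergence in distribution then upgrades the convergence from $\mathbf{s}$ to $\mathbf{h}_n$, and since $\int'\tilde h_{j,n}\,dB_n^{\otimes k_j}\EqD Q_{k_j}(h_{j,n})$, this is exactly $\mathbf{Q}\ConvD\mathbf{I}$.

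The main obstacle I anticipate is not the CLT input but keeping the off-diagonal structures aligned across the discretization. Since $B_n$ is atomic, $B_n^{\otimes k}$ charges the diagonals $x_p=x_q$, and the prime in $\int'$ is essential: I must verify that the off-diagonal restriction removes exactly the coincidence terms $i_p=i_q$, that each such coincidence carries an extra factor $n^{-1}$ after the $n^{k/2}$ normalization and therefore contributes $O(n^{-1})$ to the $L^2$ norm, and that consequently no diagonal mass survives to contaminate the Wiener-It\^o limit. Establishing the exact isometry together with this vanishing-of-diagonals estimate uniformly in $n$ is the delicate point; it is the discrete counterpart of the statement that in a moment expansion of $\mathbf{Q}$ only the pair-partition diagrams survive the normalization and reproduce the Wiener-It\^o diagram formula, all partitions with a block of size $\ge3$ or with a within-factor pairing being negligible, a reduction that relies on the symmetrization and the square-summability of the kernels.
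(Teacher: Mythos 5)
Your proposal is correct and follows essentially the same route as the actual proof of this result (which this paper does not reprove but imports as Proposition 4.1 of \citet{bai:taqqu:2013:generalized}): reduction to symmetric kernels, realization of the off-diagonal discrete chaos as a primed integral against the atomic measure $B_n$, approximation by simple kernels built from products of \emph{disjoint} intervals, the multivariate CLT plus continuous mapping for the simple-kernel level, and the exact off-diagonal isometry (which, as you correctly note, needs only mean zero and unit variance) to glue the approximations uniformly in $n$ via the standard three-step convergence lemma. One remark: the ``delicate point'' you flag at the end is moot in your own scheme --- since the primed integral excludes diagonals by definition and your simple kernels are supported off the diagonals, no coincidence terms ever arise and no diagram/moment estimates are needed beyond the elementary isometry computation (also, your displayed identity should be an inequality $\le$ once the off-diagonal $\ell^2$ sum is dominated by the full $L^2(\mathbb{R}^{k_j})$ norm, which is the direction you need anyway).
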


We are now ready to state the non-central limit theorem.
We  always assume in the sequel that the coefficient $a(\cdot)$ is of the form (\ref{eq:LRD a}) and symmetric,  with $g$  a symmetric GHK(B).
Proposition \ref{Pro:basic acf} and Corollary \ref{Cor:full acf} show that the basic terms $X_\pi^{\mathbf{j}}(n)$ in the decomposition (\ref{eq:X_c}) will either be long-range dependent  or short-range dependent, and the short-range dependent ones will vanish if the normalization $N^{-H}$ used for long-range dependent terms is applied.
\begin{Thm}\label{Thm:Basic NCLT}
Let $X(n)$ be a $k$-th order Volterra process given  in (\ref{eq:X(n)}), with the coefficient $a(\mathbf{i})=g(\mathbf{i})L(\mathbf{i})$ given as in (\ref{eq:LRD a}), where $g$ is a symmetric GHK(B) on $\mathbb{R}_+^k$ with homogeneity exponent $\alpha\in (-k/2-1/2,-k/2)$. Then one has the following weak convergence in $D[0,1]$:
\begin{equation}\label{eq:NCLT}
\frac{1}{N^H}\sum_{n=1}^{[Nt]}\Big( X(n)-\E X(n)\Big) \Rightarrow Z(t):=\sum_{r=0}^{[k/2]} d_{k,r} Z_{k-2r}(t),
\end{equation}
where
\begin{equation}\label{eq:H formula}
H=\alpha+k/2+1,
\end{equation}
\begin{equation}\label{eq:d_kr}
d_{k,r}=\frac{k!}{2^r(k-2r)!r!},
\end{equation}
$Z_0(t):=0$, and if $k-2r>0$,
\[
Z_{k-2r}(t):=\int'_{\mathbb{R}^{k-2r}} \int_0^t g_r(s\mathbf{1}-\mathbf{x})\mathrm{1}_{\{s\mathbf{1}>\mathbf{x}\}}ds ~B(dx_1)\ldots B(dx_{k-2r})
\]
is a $(k-2r)$-th order generalized Hermite process and
\begin{equation}\label{eq:g_r(s1-x)}
g_r(s\mathbf{1}-\mathbf{x}):=\int_{\mathbb{R}_+^{r}}g(y_1,y_1,\ldots,y_r,y_r,s-x_{1},\ldots,s-x_{k-2r}) dy_1\ldots dy_r.
\end{equation}
\end{Thm}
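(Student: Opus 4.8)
The plan is to build on the off-diagonal decomposition \eqref{eq:X_c}, which writes the centered process as a finite sum $X(n)-\E X(n)=\sum_{\pi\in\mathcal{P}_k}\sum_{\mathbf{j}\in J_\pi^+}X_\pi^{\mathbf{j}}(n)$ of discrete chaos terms, and to show that after normalization by $N^{-H}$ only the \emph{long-range dependent} terms survive. By part (ii) of Proposition \ref{Pro:basic acf}, every term with $m+r<k$ has an absolutely summable autocovariance, so the variance formula recalled in the proof of Corollary \ref{Cor:full acf} gives $\Var[\sum_{n=1}^{[Nt]}X_\pi^{\mathbf{j}}(n)]=O(N)=o(N^{2H})$, since $H>1/2$. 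As there are only finitely many such terms, the triangle inequality in $L^2(\Omega)$ shows that their total contribution, normalized by $N^{-H}$, tends to $0$ in $L^2(\Omega)$ and may be discarded.

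I then isolate the long-range dependent terms, those with $m+r=k$. As identified in the proof of Proposition \ref{Pro:basic acf}, these are exactly the partitions $\pi$ all of whose blocks have size $\le 2$, carrying $j_t=0$ on the $r$ blocks of size $2$ (so that $A_0\equiv 1$) and $j_t=1$ on the $k-2r$ singletons (so that $A_1(\epsilon)=\epsilon$). For a fixed $r$ there are precisely $\frac{k!}{2^r(k-2r)!\,r!}=d_{k,r}$ such partitions, and since $g$ is symmetric, relabelling the summation indices shows that each of them yields the \emph{identical} random variable, namely the representative $X_r(n)$ of the form \eqref{eq:basic contri} obtained by pairing, say, the first $2r$ coordinates. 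Summing $X_r(n)$ over its $r$ paired variables will, in the scaling limit, reproduce the kernel $g_r$ of Lemma \ref{Lem:int g}, so the level-$r$ contribution equals $d_{k,r}X_r(n)$.

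The core of the argument is an application of Proposition \ref{Pro:Poly->Wiener}. After the substitution $x_p=n-\ell_p$, the normalized partial sum $N^{-H}\sum_{n=1}^{[Nt]}X_r(n)$ is a discrete chaos $Q_{k-2r}(N^{-H}h_N)$, whose kernel $h_N(\mathbf{x})=\sum_{n=1}^{[Nt]}\sum'_{\mathbf{i}}g(i_1\mathbf{1}_2,\ldots,i_r\mathbf{1}_2,n\mathbf{1}-\mathbf{x})L(\ldots)\mathrm{1}_{\{n\mathbf{1}>\mathbf{x}\}}$ collects the outer sum over $n$ and the inner sum over the paired indices. Using the homogeneity of $g_r$ (exponent $\alpha_r=\alpha+r$) together with the identity $H=\alpha_r+(k-2r)/2+1$ from Remark \ref{Rem:H}, the rescaled kernel $\tilde{h}_N(\mathbf{x}):=N^{(k-2r)/2-H}h_N([N\mathbf{x}]+\mathbf{c})$ carries total exponent zero and, via a Riemann-sum approximation of $\int_0^t\cdots\,ds$, the passage $L\to 1$ permitted by \eqref{eq:L assump}, and the replacement of the discrete sum over the paired indices by the defining integral of $g_r$, converges pointwise to $h_t(\mathbf{x})=\int_0^t g_r(s\mathbf{1}-\mathbf{x})\mathrm{1}_{\{s\mathbf{1}>\mathbf{x}\}}\,ds$, the kernel of $Z_{k-2r}(t)$. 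To upgrade this to the $L^2(\mathbb{R}^{k-2r})$-convergence \eqref{eq:htilde h L2 conv} demanded by Proposition \ref{Pro:Poly->Wiener}, I would dominate $|\tilde{h}_N|$ by means of the GHK(B) bound $|g_r(\mathbf{z})|\le c\|\mathbf{z}\|^{\alpha_r}$, mimicking the auxiliary GHK(B) $g_r^*$ employed in Proposition \ref{Pro:basic acf}, so that dominated convergence applies; the excluded diagonals contribute nothing because they have Lebesgue measure zero. Proposition \ref{Pro:Poly->Wiener}, being stated for vectors, then yields the \emph{joint} convergence of $(Q_{k-2r})_{0\le r\le[k/2]}$ to $(I_{k-2r}(h_t))_r=(Z_{k-2r}(t))_r$, all built from a common Brownian random measure, and hence the finite-dimensional convergence of $N^{-H}\sum_{n=1}^{[Nt]}(X(n)-\E X(n))$ to $\sum_{r=0}^{[k/2]}d_{k,r}Z_{k-2r}(t)$.

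Tightness in $D[0,1]$ then follows, as in Theorem \ref{Thm:CLT}, from the hypercontractivity estimate of Lemma \ref{Lem:hypercontract}: for a suitable $p>2$, permitted by a moment $\E|\epsilon_i|^{2k+\delta}<\infty$ at the level of the Appell polynomials, the $p$-th moment of an increment of each normalized chaos is bounded by a constant times the $p/2$ power of its squared $L^2$-norm, and the self-similar scaling, by which that $L^2$-norm behaves like $|t-s|^{H}$, produces a Kolmogorov moment bound with exponent $Hp>1$. The main obstacle is precisely the $L^2$-convergence $\tilde{h}_N\to h_t$ rather than mere pointwise convergence: one must control simultaneously the integral approximation of the outer sum over $n$, the replacement of the inner discrete sum over the $r$ paired variables by the integral defining $g_r$, and the slowly-varying factor $L$, all under a single integrable majorant on $\mathbb{R}^{k-2r}$ — a majorant sharp enough to respect the borderline integrability encoded in $\alpha\in(-\tfrac{k+1}{2},-\tfrac{k}{2})$. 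This is where the bulk of the technical effort lies.
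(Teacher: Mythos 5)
Your treatment of the finite-dimensional distributions is essentially the paper's own argument: the same decomposition (\ref{eq:X_c}), the same discarding of the $m+r<k$ terms via Proposition \ref{Pro:basic acf}, the same identification of the surviving terms with pair partitions (there are exactly $d_{k,r}$ of them and, by the symmetry of $g$, they all coincide), and the same application of Proposition \ref{Pro:Poly->Wiener} after dominating the rescaled kernels by a GHK(B) majorant and invoking Lemma \ref{Lem:int g} and Remark \ref{Rem:int|g(s1-x)|ds<inf}. One technical point you only allude to ("integral approximation of the outer sum over $n$") is handled explicitly in the paper: since the sum over $n$ stops at $[Nt]$ while the corresponding integral runs to $Nt$, an explicit remainder $\tilde{R}_{t,N}$ appears in the rescaled kernel, and its $L^2(\mathbb{R}^{m-r})$ norm must be shown to vanish; any complete write-up needs this term.

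The genuine gap is in your tightness step. The theorem's standing moment assumption is only $\E\,\epsilon_i^{2k}<\infty$, which is what makes $X(n)$ well-defined in $L^2(\Omega)$ (Proposition \ref{Pro:X(n) well defined}); the condition $\E|\epsilon_i|^{2k+\delta}<\infty$ that you invoke "at the level of the Appell polynomials" is \emph{not} among the hypotheses, so your Kolmogorov-type argument via Lemma \ref{Lem:hypercontract} proves a strictly weaker statement. The failure is real, not cosmetic: for $k=1$ the process is a linear process with only a finite second moment assumed, and no $p$-th moment with $p>2$ is available at all, so hypercontractivity cannot be applied (even for $k\ge 2$, where $\E\,\epsilon_i^{2k}<\infty$ with $2k\ge 4$ would let you take some $p\in(2,2k]$ without new assumptions, your "$+\delta$" is gratuitous). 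The hypercontractivity route is what the paper uses for the short-memory CLT (Theorem \ref{Thm:CLT}), and there the extra moment condition is explicitly \emph{added} to the hypotheses precisely because it is needed. In the long-memory regime it is unnecessary: by Corollary \ref{Cor:full acf} and stationarity, the increments of the normalized partial sum satisfy a second-moment bound of order $|t-s|^{2H}$ with $2H>1$, and tightness in $D[0,1]$ then follows from the standard variance-based criterion (Proposition 4.4.2 of \citet{giraitis:koul:surgailis:2009:large}), which is exactly what the paper invokes. Replacing your tightness paragraph by this criterion closes the gap without any new assumptions.
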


\begin{proof}
The process $X(n)$ is well-defined in the $L^2(\Omega)$-sense by Proposition \ref{Pro:X(n) well defined}. We now use the notation in Proposition \ref{Pro:basic acf}. If the basic off-diagonal term $X_{\pi}^{\mathbf{j}}(n)$ in (\ref{eq:X_c}) satisfies $m+r<k$,  in view of that proposition and the proof of Corollary \ref{Cor:full acf}, one has 
$$
N^{-H} \Var\Big[\sum_{n=1}^{[Nt]}  X_{\pi}^{\mathbf{j}}(n)\Big]\rightarrow 0,
 $$
 as $N\rightarrow \infty$. So these terms  converge in probability to zero in $D[0,1]$.

Suppose now  that $m+r=k$ or equivalently $k-2r=m-r$. The goal is show the weak convergence in $D[0,1]$ of  $N^{-H}\sum_{n=1}^{[Nt]}X_{\pi}^{\mathbf{j}}(n)$ to $Z_{k-2r}(t)$. The tightness is standard since $H>1/2$ (see, e.g., Proposition 4.4.2 of \citet{giraitis:koul:surgailis:2009:large}). It remains to show convergence of the finite-dimensional distributions. To do so, we will use Proposition \ref{Pro:Poly->Wiener}, which only requires to show that the convergence in (\ref{eq:htilde h L2 conv}) holds separately for  each order $k-2r=m-r$ with $r=1,\ldots,[k/2]$ and for a single $t>0$.

For simplicity, we assume $a(\cdot)=g(\cdot)$ (including a general $L$ satisfying (\ref{eq:L assump}) is easy), and further one can assume without loss of generality that
$$ 
a_\pi(i_1,\ldots,i_m)=g(i_1,i_1,\ldots,i_r,i_r,i_{r+1},\ldots,i_m),
$$
and thus $X_{\pi}^{\mathbf{j}}(n)$ is as given in (\ref{eq:basic contri}). Let 
$$  
(\mathbf{i},\mathbd{\ell})=(i_1,i_1,\ldots,i_r,i_r,l_1,\ldots,l_{m-r}),
 $$
 and since $X_\pi^{\mathbf{j}}(n)$ has no diagonals, we let
\[
F(\mathbd{\ell},n)=\{\mathbf{i}\in \mathbb{R}_+^{r}: i_u\neq i_v \text{ for }u\neq v;~\text{and } i_p\neq n-l_q \text{ including the case }p=q\},
\] so that we can write
\begin{align*}
N^{-H}\sum_{n=1}^{[Nt]} X_{\pi}^{\mathbf{j}}(n)&=\sum_{\mathbd{\ell}\in \mathbb{R}_+^{m-r}}'\frac{1}{N^{\alpha+k/2+1}}\sum_{n=1}^{[Nt]}\sum_{\mathbf{i}\in F(\mathbd{\ell},n)} g\left(\mathbf{i},n\mathbf{1}-\mathbd{\ell}\right)\mathrm{1}_{\{n\mathbf{1}>\mathbd{\ell}\}}\epsilon_{l_1}\ldots\epsilon_{l_{m-r}}=:Q_{m-r}(h_{t,N}).
\end{align*}
By associating $\mathbf{i}$ to $[N\mathbf{x}]+\mathbf{1}$ and $n$ to $[Ns]+1$, we write the inner sums into integrals, namely,
\begin{align}
h_{t,N}(\mathbd{\ell})&=\frac{1}{N^{\alpha+k/2+1}}\sum_{n=1}^{[Nt]}\sum_{\mathbf{i}\in F(\mathbd{\ell},n)} g\left(\mathbf{i},n\mathbf{1}-\mathbd{\ell}\right)\mathrm{1}_{\{n\mathbf{1}>\mathbd{\ell}\}}\label{eq:h_t,N(l)}
\\&=N^{-(m-r)/2}\int_0^tds\int_{\mathbb{R}_+^r}d\mathbf{x} g\left(\frac{[N\mathbf{x}]+\mathbf{1}}{N},\frac{[Ns]\mathbf{1}+\mathbf{1}-\mathbd{\ell}}{N}\right)\mathrm{1}_{\{[Ns]+\mathbf{1}>\mathbd{\ell}\}} 1_{G(\mathbd{\ell},N)}-R_{t,N}(\mathbd{\ell}),\label{eq:h_t,N(l) expand}
\end{align}
where
\[G(\mathbd{\ell},N)=\{[Nx_u]\neq [Nx_v]\text{ for }u\neq v; \text{ and} ~[Nx_p]\neq [Ns]-l_q \text{ including the case }p=q \},\]
and where
\[
R_{t,N}(\mathbd{\ell})=N^{-(m-r)/2} \frac{Nt-[Nt]}{N}\int_{\mathbb{R}_+^r}d\mathbf{x} g\left(\frac{[N\mathbf{x}]+\mathbf{1}}{N},\frac{[Nt]\mathbf{1}+\mathbf{1}-\mathbd{\ell}}{N}\right)\mathrm{1}_{\{[Nt]+\mathbf{1}>\mathbd{\ell}\}}1_{G(\mathbd{\ell},N)}
\]
is a residual term which will be asymptotically negligible\footnote{The presence of $R_{t,N}$ is due to the fact that the sum over $n$ in (\ref{eq:h_t,N(l)}) goes up to $[Nt]$, whereas the corresponding integral in (\ref{eq:h_t,N(l) expand}), before the change of variable, goes up to $Nt$. Such a remainder $R_{t,N}$ which turns out to be asymptotically negligible in $L^2$, should also have been included in \citet{bai:taqqu:2013:generalized}, following Relation (38) and also in the proof of Theorem 6.10 of that paper.}.

In view of Proposition \ref{Pro:Poly->Wiener}, it is sufficient to show that
\begin{equation}\label{eq:goal}
\lim_{N\rightarrow\infty}\|\tilde{h}_{t,N}-h_{t}\|_{L^2(\mathbb{R}^{m-r})}=0,
\end{equation}
where
\[h_t(\mathbf{y})=\int_{0}^t g_r(s\mathbf{1}-\mathbf{y})\mathrm{1}_{\{s\mathbf{1}>\mathbf{y}\}}ds,
\]
and
\begin{equation}\label{eq:tilde h_t,N}
\tilde{h}_{t,N}(\mathbf{y})=N^{(m-r)/2}h_{t,N}([N\mathbf{y}]+\mathbf{1})=\int_0^t ds\int_{\mathbb{R}_+^r}d\mathbf{x} g\left(\frac{[N\mathbf{x}]+\mathbf{1}}{N},\frac{[Ns]\mathbf{1}-[N\mathbf{y}]}{N}\right)\mathrm{1}_{\{[Ns]\mathbf{1}>[N\mathbf{y}]\}}1_{H(N)}- \tilde{R}_{t,N}(\mathbf{y}),
\end{equation}
where
\[
\tilde{R}_{t,N}(\mathbf{y})= \frac{Nt-[Nt]}{N}\int_{\mathbb{R}_+^r}d\mathbf{x} g\left(\frac{[N\mathbf{x}]+\mathbf{1}}{N},\frac{[Nt]\mathbf{1}-[N\mathbf{y}]}{N}\right)\mathrm{1}_{\{[Nt]\mathbf{1}>[N\mathbd{y}]\}}1_{H(N)},
\]
with
\[
H(N)=\{[Nx_u]\neq [Nx_v] \text{ for }u\neq v; \text{ and} ~[Nx_p]\neq [Ns]-[Ny_q]-1 \text{ including the case }p=q\}.\]
The term $H(N)$ comes from $G(\mathbd{\ell},N)$.

We first deal with the term involving $g(\cdot)$ in (\ref{eq:tilde h_t,N}), and then with $\tilde{R}_{t,N}$.
First, the a.e.\ convergence of
\begin{align*}
g\left(\frac{[N\mathbf{x}]+\mathbf{1}}{N},\frac{[Ns]\mathbf{1}-[N\mathbf{y}]}{N}\right)\mathrm{1}_{\{[Ns]\mathbf{1}>[N\mathbf{y}]\}}1_{ H(N)}
\end{align*}
to
\begin{align*}
g(\mathbf{x},s\mathbf{1}-\mathbf{y})\mathrm{1}_{\{s\mathbf{1}>\mathbf{y}\}}~~\text{ as } N\rightarrow\infty
\end{align*}
 follows from the a.e.\ continuity of $g$, and the a.e.\ convergence of $1_{H(N)}$ to $1$
\footnote{If $L$ had not been taken to be $1$, we would have an additional term $L([N\mathbf{x}]+\mathbf{1},[Ns]\mathbf{1}-[N\mathbf{y}])$, which tends to $1$ as $N\rightarrow\infty$ by (\ref{eq:L assump}). Indeed, we can write $
\Big([N\mathbf{x}]+\mathbf{1},[Ns]\mathbf{1}-[N\mathbf{y}]\Big)=\Big([N\mathbf{x}],[N(s\mathbf{1}-\mathbf{y})]\Big)+\mathbf{B}_N(s,\mathbf{y}),$
where for each fixed $(s,\mathbf{y})$, the remainder
$
\mathbf{B}_N(s,\mathbf{y})=\Big(\mathbf{1},[Ns]\mathbf{1}-[N\mathbf{y}]- [N(s\mathbf{1}-\mathbf{y})]\Big)
$
is bounded with respect to $N$, thus we apply (\ref{eq:L assump})}.  We are thus left to establish suitable bounds in order to apply  the Dominated Convergence Theorem.

By the definition of a GHK(B), 
$$
|g(\mathbf{x})|\le c\|\mathbf{x}\|^\alpha=c (x_1+\ldots+x_k)^\alpha
 $$
 for some constant $c>0$. Recall that  $\alpha<-k/2$. We hence claim that for any $\mathbf{x}>\mathbf{0}$,
\begin{equation}\label{eq:dom bound 1}
\left|g\left(\frac{[N\mathbf{x}]+\mathbf{1}}{N},\frac{[Ns]\mathbf{1}-[N\mathbf{y}]}{N}\right)\right|\mathrm{1}_{\{[Ns]\mathbf{1}>[N\mathbf{y}]\}}\le g^*\left(\mathbf{x},s\mathbf{1}-\mathbf{y}\right)\mathrm{1}_{\{s\mathbf{1}>\mathbf{y}\}},
\end{equation}
where 
$$
g^*(\mathbf{z})=C\|\mathbf{z}\|^\alpha=C(|z_1|+\ldots+|z_m|)^\alpha,\quad \mathbf{z}\in \mathbb{R}^m,
$$
 for some constant $C>0$.
This is because $\{[Ns]\mathbf{1}>[N\mathbf{y}]\} \subset \{s\mathbf{1}>\mathbf{y}\}$, and on the set $\{\mathbf{x}>\mathbf{0},[Ns]\mathbf{1}>[N\mathbf{y}]\}$, we have $([N\mathbf{x}]+1)/N>\mathbf{x}$, as well as  $([Ns]-[Ny_j])/N \ge \frac{1}{2}(s-y_j)$ (see Relation (40) in the proof of Theorem 6.5 of \citet{bai:taqqu:2013:generalized}), $j=1,\ldots,m-r$. But by Remark \ref{Rem:int|g(s1-x)|ds<inf}, for any $t>0$ and a.e.\ $\mathbf{y}\in \mathbb{R}^{m-r}$,
\begin{align}\label{eq:dom bound 2}
\int_0^t ds \int_{\mathbb{R}_+^r} d\mathbf{x} g^*(\mathbf{x},s\mathbf{1}-\mathbf{y})\mathrm{1}_{\{s\mathbf{1}>\mathbf{y}\}}=\int_0^t g^*_r(s\mathbf{1}-\mathbf{y})\mathrm{1}_{\{s\mathbf{1}>\mathbf{y}\}} ds<\infty,
\end{align}
where $g_r^*(\mathbf{y})=C'\|\mathbf{y}\|^{\alpha+r}$ for some $C'>0$ is a GHK on $\mathbb{R}_+^{m-r}$ (see Lemma \ref{Lem:int g}). One hence obtains by (\ref{eq:dom bound 1}), (\ref{eq:dom bound 2}) and the Dominated Convergence Theorem that $\tilde{h}_{t,N}(\mathbf{y})$ converges to $h_{t}(\mathbf{y})$ for a.e. $\mathbf{y}\in \mathbb{R}^{m-r}$. To conclude the $L^2$-convergence of $\tilde{h}_{t,N}$ to $h_{t}$, note that
\begin{equation}\label{eq:bound outer}
|\tilde{h}_{t,N}(\mathbf{y})|\le \tilde{h}^*_{t}(\mathbf{y}):=\int_0^t ds  g^*_r\left(s\mathbf{1}-\mathbf{y}\right)\mathrm{1}_{\{s\mathbf{1}>\mathbf{y}\}},
\end{equation}
where $\tilde{h}^*_{t} \in L^2(\mathbb{R}^{m-r})$ by Remark \ref{Rem:int|g(s1-x)|ds<inf}. Since $h_t(\mathbf{y})\in L^2(\mathbb{R}^{m-r})$ as well, we can apply the $L^2$-version Dominated Convergence Theorem to conclude (\ref{eq:goal}),  because  the remainder term $\tilde{R}_{t,N}$ in (\ref{eq:tilde h_t,N}) satisfies
\begin{align*}
\|\tilde{R}_{t,N}(\mathbf{y})\|_{L^2(\mathbb{R}^{m-r})}^2\le& \left(\frac{Nt-[Nt]}{N}\right)^2 N^{-(m-r)} \sum_{\mathbd{l}>\mathbf{0}}\left( \int_{\mathbb{R}^r_+}d\mathbf{x} g^*\left(\mathbf{x}, \frac{\mathbd{\ell}}{N} \right) \right)^2
\\=& N^{-2H} (Nt-[Nt])^2  \sum_{\mathbf{\ell}>\mathbf{0}} g_r^*\left(\mathbd{\ell}\right)^2\rightarrow 0
\end{align*}
as $N\rightarrow\infty$ since $\sum_{\mathbd{\ell}>\mathbf{0}} g_r^*\left(\mathbf{i}\right)^2<\infty$. We also used the fact that $ \int_{\mathbb{R}^r_+}d\mathbf{x} g^*\left(\mathbf{x}, \frac{\mathbd{\ell}}{N}\right)=N^{-\alpha-r}g^*_r(\mathbd{\ell})$ and $H=\alpha+(m+r)/2+1$.

Finally, the combinatorial coefficients $d_{k,r}$ in (\ref{eq:NCLT}) are obtained by counting the ways  of choosing $r$ subsets out of the $k$ variables, where each subset contains $2$ variables, and where the order of the $r$ subsets does not matter. One can apply the multinomial formula involving $k$ variables to be divided into one group of $k-2r$ variables and $r$ groups of $2$ variables, but since the order of these $r$ groups is irrelevant, there is an additional division by $r!$. Hence
\[
d_{k,r}= \frac{k!}{(k-2r)!(2!)^r }\frac{1}{r!}.
\]
\end{proof}

\begin{Rem}
We have considered only causal forms because  for the coefficient $a(\mathbf{i})$ for non-causal forms, $\mathbf{i}\in \mathbb{Z}^k$, one can specify different homogeneity exponents $\alpha$ in different orthotopes of $\mathbf{i}$ for $a(\mathbf{i})$, and only the orthotope with the highest $\alpha$ will contribute in the limit.
\end{Rem}

\begin{Eg}\label{Eg}
Set in Theorem \ref{Thm:Basic NCLT}  $a(\mathbf{i})=g(\mathbf{i})=(i_1+i_2+i_3+i_4+i_5)^{\alpha}$, where $-3<\alpha<-5/2$. Hence by (\ref{eq:H formula}), 
$$  
H=\alpha+5/2+1=\alpha+7/2\in (1/2,1).
$$
 That is, we consider
\[
X(n)=\sum_{0<i_1,\ldots,i_5<\infty} (i_1+i_2+i_3+i_4+i_5)^{\alpha} \epsilon_{n-i_1}\epsilon_{n-i_2} \epsilon_{n-i_3} \epsilon_{n-i_4}  \epsilon_{n-i_5}.
\]
Here $k=5$, and hence $r$, which denotes the possible number of pairings of variables, can be $0$, $1$, or $2$. The corresponding functions $g_r$'s in  (\ref{eq:g_r}), are $g_0=g$, where no pairing takes place,
\[
g_1(x_1,x_2,x_3)=\int_0^\infty(x_1+x_2+x_3+2y)^{\alpha}dy= \frac{1}{2(\alpha+1)}(x_1+x_2+x_3)^{\alpha+1},
\]
where there is one pairing, and
\[
g_2(x_1)=\int_0^\infty\int_0^\infty (x_1+2y_1+2y_2)^{\alpha}dy_1dy_2=\frac{1}{4(\alpha+1)(\alpha+2)}x_1^{\alpha+2},
\]
where there are two pairings.
Moreover, by (\ref{eq:d_kr}), 
$$  
d_{5,0}=1,\quad  d_{5,1}=\frac{5!}{2\times 3!\times 1}=10,\quad  \mbox{\rm and} \quad d_{5,2}=\frac{5!}{2^2\times 1\times 2}=15.
 $$
 We have then the following convergence in $D[0,1]$:
\begin{align*}
\frac{1}{N^H}\sum_{n=1}^{[Nt]}[X(n)-\E X(n)]\Rightarrow Z_{5}(t)+10Z_3(t)+15Z_1(t),
\end{align*}
where
\[
Z_5(t):=\int'_{\mathbb{R}^5} \int_0^t (5s- x_1-x_2-x_3-x_4-x_5)^{\alpha} \mathrm{1}_{\{s\mathbf{1}>\mathbf{x}\}}ds ~B(dx_1)B(dx_2)B(dx_3)B(dx_4)B(dx_5),
\]
\[
Z_3(t):=\frac{1}{2(\alpha+1)}\int'_{\mathbb{R}^3} \int_0^t (3s- x_1-x_2-x_3)^{\alpha+1} \mathrm{1}_{\{s\mathbf{1}>\mathbf{x}\}}ds ~B(dx_1)B(dx_2)B(dx_3)
\]
and
\[
Z_1(t):=\frac{1}{4(\alpha+1)(\alpha+2)}\int_{\mathbb{R}} \int_0^t (s- x_1)_+^{\alpha+2} ds ~B(dx_1).
\]
Observe that $Z_1(t)$ is fractional Brownian motion with $H=\alpha+7/2$, and can be expressed as
\[
Z_1(t)=\frac{1}{4(\alpha+1)(\alpha+2)(\alpha+3)} \int_{\mathbb{R}}\left[ (t-x_1)_+^{\alpha+3}-(-x_1)_+^{\alpha+3}\right] B(dx_1).
\]
\end{Eg}

\section{Expressing the NCLT limit as a centered multiple\\ Wiener-Stratonovich integral}\label{Sec:Stra}

When Norbert Wiener (see, e.g, ,\citet{wiener:1966:nonlinear}) first introduced the multiple integral with respect to a Brownian motion, he did not  exclude the diagonals to render integrals of different orders orthogonal to each other, although the  idea of orthogonalization was in fact informally developed (see Lecture 4 in \citet{wiener:1966:nonlinear}). \citet{ito:1951:multiple} modified Wiener's definition by excluding the diagonals, and made the $k$-tuple integral $I_k(f)$ well-defined for all $f\in L^2(\mathbb{R}^k)$. Since then, the literature had focused on It\^o's off-diagonal integrals. \citet{hu:meyer:1988:integrales}, however, considered integrals with diagonals and related them to the iterated Stratonovich integrals.  Formal theories were later developed in \citet{johnson:kallianpur:1993:homogeneous} and \citet{budhiraja:1994:hilbert}.

We denote the $k$-tuple Wiener-Stratonovich integral as  $\mathring{I}_k(\cdot)$. The integral $\mathring{I}_k(\cdot)$ and the Wiener-It\^o integral $I_k(\cdot)$ are related through the following Hu-Meyer formula: for a symmetric function $h\in L^2(\mathbb{R}^k)$,
\begin{equation}\label{eq:Hu Meyer}
\mathring{I}_k(h)= \sum_{r=0}^{[k/2]}{d_{k,r}} I_{k-2r}(\tau^r h),
\end{equation}
where $d_{k,r}$ is as in (\ref{eq:d_kr}), and $\tau^r$ is the so-called $r$-th $\tau$-trace defined as
\[
(\tau^r h)(\mathbf{x})=\int_{\mathbb{R}^r} h(y_1,y_1,y_2,y_2,\ldots,y_r,y_r, x_1,\ldots ,x_{k-2r}) d\mathbf{y},
\]
provided that  $\tau^r(h)\in L^2(R^{k-r})$  (see Definition 2.7 of \citet{budhiraja:1994:hilbert}). In the integral defining $\tau^r(h)$, we have $r$ pairs of $y$'s. We note that the formula (\ref{eq:Hu Meyer}) was in fact known to Wiener (see (5.14) of \citet{wiener:1966:nonlinear}).  There is also a more general notion of trace than $\tau^r$, called the limiting trace and denoted by $\overrightarrow{\mathrm{Tr}}^r$ (see Definition 2.3 of \citet{budhiraja:1994:hilbert}), involving tensor products of Hilbert space. It is sufficient  for our purpose to focus  on the $\tau$-trace. Note that  if $k$ is even and $r=k/2$, then $r-k/2=0$ in (\ref{eq:Hu Meyer}). In addition, the following convention is used:
$$I_0(\tau^{k/2}(h)):=\tau^{k/2}(h)=\E \mathring{I}(h).
$$

A heuristic understanding of the Hu-Meyer formula (\ref{eq:Hu Meyer}) is as follows. In  the integral
$$\mathring{I}_k(\cdot)=\int_{\mathbb{R}^k} ~\cdot~ B(dx_1)\ldots B(dx_k)$$
which includes the diagonals (we do not have the prime $'$ on the integral symbol), let's restrict first the integration set to $$
\{x_1=x_2=x,~ x\neq x_p, x_p\neq x_q,~ p\neq q\in \{3,\ldots,k\}\}.$$
The  integrator $B(dx_1)B(dx_2)$ then becomes $B(dx)^2=[B(dx)^2-dx] +dx$. The first term $[B(dx)^2-dx]$, whose variance is $2(dx)^2$, yields the integral $\int_{\mathbb{R}^{k-1}}' h(x,x,x_3,\ldots,x_k) [B(dx)^2-dx]B(dx_3)\ldots B(dx_{k})$ with variance $\int_{\mathbb{R}^{k-2}}\int_{\mathbb{R}} h(x,x,x_3,\ldots,x_k)^2 [2(dx)^2] dx_3\ldots dx_k =0$, because we have a higher power of $dx$ than needed. This means that if we integrate on the set indicated above, we end up with
$$\int_{\mathbb{R}^{k-2}}\int_{\mathbb{R}} h(x,x,x_3,\ldots,x_k) dx~ B(dx_3)\ldots B(dx_{k}).$$
If moreover, we integrate on the set $x_1=\ldots=x_l=x$, $l\ge 3$ and all $x$, $x_{l+1},\ldots,x_{k}$  distinct, using the fact $\E( B(dx)^{2n})= (2n-1)!! (dx)^{n}$, it can be shown that one always ends up with higher power of $dx$ than needed, and these terms are thus all zero. Hence the only way of getting terms that really contribute is to identify only pairs of the variables, which results in the form (\ref{eq:Hu Meyer}).

To express the limits in  Theorem \ref{Thm:Basic NCLT} in terms of Wiener-Stratonovich integrals,  let
\begin{equation}\label{eq:h_tx}
h_t(\mathbf{x})=\int_0^t g(s\mathbf{1}-\mathbf{x})\mathrm{1}_{\{s\mathbf{1}>\mathbf{x}\}}ds,
\end{equation}
where $g$ is a GHK(B) on $\mathbb{R}_+^k$. Suppose that   $2r<k$, which is always the case when $k$ is odd. Then
\begin{align*}
\tau^r h_t(\mathbf{x}) &=\int_{\mathbb{R}^r}d\mathbf{y}\int_0^t ds g(s-y_1,s-y_1,\ldots,s-y_r,s-y_r,s-x_1,\ldots,s-x_{k-2r})1_{\{s\mathbf{1}>\mathbf{x}\}}1_{\{s\mathbf{1}>\mathbf{y}\}}\\
&=\int_0^tds\int_{\mathbb{R}_+^{r}}d\mathbf{y}g(y_1,y_1,\ldots,y_r,y_r,s-x_{1},\ldots,s-x_{k-2r})1_{\{s\mathbf{1}>\mathbf{x}\}} 
\\
&= \int_0^t g_r(s\mathbf{1}-\mathbf{x})\mathrm{1}_{\{s\mathbf{1}>\mathbf{x}\}} ds,
\end{align*}
  where $g_r(s\mathbf{1}-\mathbf{x})$ is as given in (\ref{eq:g_r(s1-x)}), and the change of integration order is  justified by Fubini as  the proof of Lemma \ref{Lem:int g}. (Observe that $2r<k$ is an assumption of Lemma \ref{Lem:int g}.)

In the special case when $k$ is even and $2r=k$, the change of the integration order cannot be justified by Fubini, and
$$
\tau^{k/2}(h_t)=\int_{\mathbb{R}^{k/2}}d\mathbf{y}\int_0^t g(s-y_1,s-y_1,\ldots,s-y_{k/2},s-y_{k/2})1_{\{s\mathbf{1}>\mathbf{y}\}}ds
$$
may not exist, because for example if $g(\mathbf{x})=\|\mathbf{x}\|^{\alpha}$, then
$$
\int_{\mathbb{R}^{k/2}_+}\|(x_1,x_1,\ldots,x_{k/2},x_{k/2})\|^{\alpha}d\mathbf{x}=c\int_{\mathbb{R}^{k/2}_+} (x_1+\ldots+x_{k/2})^\alpha d\mathbf{x}=\infty.
$$
 Theorem \ref{Thm:Basic NCLT}, however, does not involve the term $r=k/2$ (the zero-order term $Z_0(t)$ was defined to be zero). So we cannot directly use the Hu-Meyer formula (\ref{eq:Hu Meyer}) to re-express the   limit process  $Z(t)$ in (\ref{eq:NCLT}) in terms of a multiple Wiener-Stratonovich integral. We have to consider instead the \emph{centered Wiener-Stratonovich integral} which we define as     
\begin{equation}\label{eq:centered Hu Meyer}
\mathring{I}^c_k(h)=\sum_{0\le r< k/2}{d_{k,r}} I_{k-2r}(\tau^r h),
\end{equation}
where we do not include the  $0$-th order (constant) term which arises when $r=k/2$. Consequently, the integral has always  $0$ mean. Note that $\mathring{I}^c_k$ coincides $\mathring{I}_k$ when $k$ is odd, but obviously admits a larger class of integrands $h$ when $k$ is even.
With this modification, we are able to restate Theorem \ref{Thm:Basic NCLT} for the  long-memory Volterra process as follows.
\begin{Cor}
Let $X(n)=\sum_{\mathbf{i}\in \mathbb{Z}_+^k} g(\mathbf{i})L(\mathbf{i})\epsilon_{n-i_1}\ldots\epsilon_{n-i_k}$ be as in Theorem \ref{Thm:Basic NCLT}. Then
\begin{equation*}
\frac{1}{N^H}\sum_{n=1}^{[Nt]}\left[X(n)-\E X(n)\right] \Rightarrow Z(t)=\mathring{I}^c_k(h_t)=\sum_{0\le r< k/2}{d_{k,r}} I_{k-2r}(\tau^r h_t),
\end{equation*}
where $h_t$ is defined in (\ref{eq:h_tx}).
\end{Cor}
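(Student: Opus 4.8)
The plan is to note that the weak convergence itself is already delivered by Theorem \ref{Thm:Basic NCLT}, which gives $N^{-H}\sum_{n=1}^{[Nt]}[X(n)-\E X(n)]\Rightarrow Z(t)=\sum_{r=0}^{[k/2]}d_{k,r}Z_{k-2r}(t)$ with $Z_0(t):=0$. Nothing about the convergence needs to be redone. The only remaining task is the purely analytic identification of this limit process with the centered Wiener-Stratonovich integral $\mathring{I}^c_k(h_t)$ defined in (\ref{eq:centered Hu Meyer}). Since both expressions carry the same combinatorial coefficients $d_{k,r}$, it will suffice to match the kernels term by term, i.e.\ to establish $Z_{k-2r}(t)=I_{k-2r}(\tau^r h_t)$ for each $r$ with $0\le r<k/2$, and then to check that the remaining index $r=k/2$ drops out of both sides.

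First I would recall that, by its defining formula in the statement of Theorem \ref{Thm:Basic NCLT}, the generalized Hermite process $Z_{k-2r}(t)$ is exactly the multiple Wiener-It\^o integral $I_{k-2r}(h_t^{(r)})$ of the kernel
\[
h_t^{(r)}(\mathbf{x})=\int_0^t g_r(s\mathbf{1}-\mathbf{x})\mathrm{1}_{\{s\mathbf{1}>\mathbf{x}\}}\,ds,
\]
with $g_r$ given in (\ref{eq:g_r(s1-x)}). Next I would invoke the computation carried out in the paragraph immediately preceding the corollary, which shows that for $2r<k$ the trace $\tau^r h_t(\mathbf{x})$ equals precisely this same $h_t^{(r)}(\mathbf{x})$. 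Thus $\tau^r h_t=h_t^{(r)}$, and consequently $I_{k-2r}(\tau^r h_t)=I_{k-2r}(h_t^{(r)})=Z_{k-2r}(t)$, which is the desired term-by-term identity.

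Then I would dispose of the boundary index $r=k/2$, which arises only when $k$ is even. In Theorem \ref{Thm:Basic NCLT} this index contributes $d_{k,k/2}Z_0(t)$, and $Z_0(t)$ was defined to be $0$, so this term is absent from $Z(t)$. Correspondingly, the centered integral $\mathring{I}^c_k$ in (\ref{eq:centered Hu Meyer}) was defined precisely so as to omit the $r=k/2$ summand. Hence both sums range only over $0\le r<k/2$, and combining this with the previous step yields
\[
Z(t)=\sum_{0\le r<k/2}d_{k,r}Z_{k-2r}(t)=\sum_{0\le r<k/2}d_{k,r}I_{k-2r}(\tau^r h_t)=\mathring{I}^c_k(h_t),
\]
as claimed.

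The only genuine subtlety — and the very reason one must pass to the centered version (\ref{eq:centered Hu Meyer}) rather than use the full Hu-Meyer formula (\ref{eq:Hu Meyer}) — is the interchange of the $ds$ and $d\mathbf{y}$ integrations needed to evaluate $\tau^r h_t$. For $2r<k$ this interchange is legitimate by Fubini, exactly as in the proof of Lemma \ref{Lem:int g}, because $g_r$ is then a well-defined GHK and the relevant integrals are finite. For $2r=k$, however, the trace $\tau^{k/2}(h_t)$ may diverge, so the ordinary Stratonovich integral $\mathring{I}_k(h_t)$ need not exist when $k$ is even; this is exactly why the centered integral is the correct object. I expect no further obstacle, since the convergence is inherited verbatim from Theorem \ref{Thm:Basic NCLT} and the remaining work is the kernel bookkeeping described above.
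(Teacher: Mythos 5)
Your proposal is correct and follows essentially the same route as the paper: the paper also takes the convergence verbatim from Theorem \ref{Thm:Basic NCLT}, identifies $\tau^r h_t$ with $\int_0^t g_r(s\mathbf{1}-\mathbf{x})\mathrm{1}_{\{s\mathbf{1}>\mathbf{x}\}}\,ds$ via the Fubini argument of Lemma \ref{Lem:int g} for $2r<k$, and handles the possibly divergent $r=k/2$ trace by passing to the centered integral $\mathring{I}^c_k$. Nothing is missing.
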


\section{An extended hypercontractivity formula}\label{Sec:Extension hyper}
Suppose that $\mathbd{\epsilon}_i=(\epsilon_{i}^{(1)},\ldots,\epsilon_{i}^{(k)})$ are i.i.d.\ vectors with $0$ means and finite variances. Let $a(\cdot)$ be a function defined on the tetrahedral $\{(i_1,\ldots,i_k)\in \mathbb{Z}_+^k,~ i_1<\ldots<i_k\}$, and let
\[
T_n^k (a)=\sum_{1\le i_1<\ldots<i_k\le n} a(i_1,\ldots,i_k)\epsilon_{i_1}^{(1)}\ldots\epsilon_{i_k}^{(k)},
\]
where the case $k=0$ is understood as a constant $a$.  For $k\ge 1$, define
\[
T_{i-1}^{k-1}(a)=\sum_{1\le i_1<\ldots<i_{k-1}\le i-1} a(i_1,\ldots,i_{k-1},i)\epsilon_{i_1}^{(1)}\ldots\epsilon_{i_{k-1}}^{(k-1)}.
\]
Then
\begin{equation}\label{eq:mart diff}
T_n^k(a)=\sum_{i=k}^n T_{i-1}^{k-1}(a)\epsilon_{i}^{(k)}.
\end{equation}
Let $\mathcal{F}_n=\sigma(\mathbd{\epsilon}_i, i\le n)$. Then $\{T_n^k(a), n\ge k\}$ is a martingale with respect to $\mathcal{F}_n$ and (\ref{eq:mart diff}) is a decomposition into martingale differences since 
$$ 
\E[ T_{n-1}^{k-1}(a)\epsilon_{n}^{(k)}|\mathcal{F}_{n-1}]
=T_{n-1}^{k-1}(a)\E[\epsilon_{n}^{(k)}]=0.
$$
\begin{Lem}[\citet{burkholder:1973:distribution}]
Let $p>2$,  and let $X_i$ be martingale differences. Then for some constant $C_p>0$, we have
\[
\left\|\sum_{i=1}^n X_i\right\|_p \le C_p \left\|\left(\sum_{i=1}^n X_i^2\right)^{1/2}  \right\|_p.
\]
\end{Lem}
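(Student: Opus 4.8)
The plan is to establish this one-sided square-function inequality by Burkholder's \emph{good-$\lambda$} method, which controls the maximal function of the martingale by its square function. Write $S_k=\sum_{i=1}^k X_i$ for the partial sums, $Q_k=\big(\sum_{i=1}^k X_i^2\big)^{1/2}$ for the square function up to time $k$, $S^*=\max_{k\le n}|S_k|$ for the maximal function, and $Q=Q_n$. Since $|S_n|\le S^*$, it suffices to prove $\|S^*\|_p\le C_p\|Q\|_p$. First I would reduce to the case $\E (S^*)^p<\infty$ by a localization argument, stopping the martingale at the first time $Q_k$ exceeds a fixed level $M$ and letting $M\to\infty$ by monotone convergence at the end; this a priori finiteness is exactly what legitimizes the absorption step below.

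The core of the argument is the good-$\lambda$ inequality: for each fixed $\beta>1$ there is a constant $c_\beta$ such that for all $\delta\in(0,1)$ and $\lambda>0$,
\[
P\big(S^*>\beta\lambda,\ Q\le\delta\lambda\big)\ \le\ c_\beta\,\delta^2\,P\big(S^*>\lambda\big).
\]
I would prove this by introducing the entrance times $T=\inf\{k:|S_k|>\lambda\}$ and (a predictable version of) $U$, the first time $Q_k$ would cross $\delta\lambda$, and studying the stopped martingale $R_k=S_{k\wedge U}-S_{(k\wedge T)\wedge U}$, whose square function stays below $\delta\lambda$. On the event $\{S^*>\beta\lambda,\ Q\le\delta\lambda\}$ one has $T\le n<U$, and the increment accumulated after time $T$ must exceed $(\beta-1)\lambda$; since the increments of $R$ are orthogonal martingale differences with $\E[R_n^2\mid\mathcal{F}_T]\le(\delta\lambda)^2$ on $\{T\le n\}$, Chebyshev's inequality applied conditionally gives a bound $(\delta/(\beta-1))^2$ there, and multiplying by $\mathrm{1}_{\{T\le n\}}$ and taking expectations yields the displayed inequality with $c_\beta=(\beta-1)^{-2}$.

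Finally I would integrate the good-$\lambda$ inequality. Splitting $P(S^*>\beta\lambda)\le P(S^*>\beta\lambda,\ Q\le\delta\lambda)+P(Q>\delta\lambda)$, multiplying by $p\lambda^{p-1}$, and integrating over $\lambda\in(0,\infty)$, the layer-cake identity $\E (S^*)^p=\int_0^\infty p\lambda^{p-1}P(S^*>\lambda)\,d\lambda$ gives
\[
\E (S^*)^p\ \le\ \beta^p c_\beta\,\delta^2\,\E (S^*)^p+\beta^p\delta^{-p}\,\E Q^p.
\]
Choosing $\delta$ small enough that $\beta^p c_\beta\delta^2<1$, I would absorb the first term on the left (here $\E(S^*)^p<\infty$ is essential) to obtain $\E(S^*)^p\le C_p\,\E Q^p$, and hence $\|S_n\|_p\le\|S^*\|_p\le C_p\|Q\|_p$. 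The main obstacle is the good-$\lambda$ inequality itself: the stopping-time bookkeeping on the event $\{T\le n<U\}$ and the conditional second-moment estimate (in particular arranging that the stopped square function truly stays below $\delta\lambda$ across the crossing jump) require care, whereas the localization and the final absorption are routine once that inequality is in hand.
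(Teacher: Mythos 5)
You are proving a statement the paper itself never proves: the lemma is quoted directly from Burkholder's 1973 paper (the authors point to Theorem 3.2 there), so your attempt must be judged against the literature rather than against anything in the text. Your outer layers follow the classical Burkholder--Gundy scheme and are fine: the reduction to $\|S^*\|_p$, the layer-cake integration of a good-$\lambda$ inequality, and the absorption step (which is legitimate here without any localization, since $|X_i|\le Q$ forces $X_i\in L^p$ whenever $\|Q\|_p<\infty$, and then $\E (S^*)^p\le \E\big(\sum_{i\le n}|X_i|\big)^p<\infty$ automatically; your proposed localization, stopping when $Q_k$ first exceeds $M$, in fact suffers from the very problem described next). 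The genuine gap is inside the good-$\lambda$ inequality, exactly at the point you flag as ``requiring care.'' There is no ``predictable version'' of $U$: the increments $Q_k^2-Q_{k-1}^2=X_k^2$ of the square function are $\mathcal{F}_k$-measurable, not $\mathcal{F}_{k-1}$-measurable, so the first crossing of the level $\delta\lambda$ cannot be anticipated one step ahead. Any process $R$ that is an honest martingale transform must decide whether to include the increment $X_i$ using $\mathcal{F}_{i-1}$ only, hence it cannot exclude the one increment that carries $Q_k$ across the level; its square function is therefore bounded only by $(\delta\lambda)^2+X_U^2$, and your key estimate $\E[R_n^2\mid\mathcal{F}_T]\le(\delta\lambda)^2$ on $\{T\le n\}$ is false in general. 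Concretely, take a martingale that first dribbles past $\lambda$ in many tiny steps while keeping the running square function below $\delta\lambda$, and then makes one enormous jump: such paths lie outside the good event $\{Q\le\delta\lambda\}$, yet they contaminate $\E[R_n^2\mid\mathcal{F}_T]$, which can exceed $\lambda^2$ by an arbitrary factor, so Chebyshev yields nothing. Intersecting the good event with $\{\max_i|X_i|\le\delta\lambda\}$ is free (because $\max_i|X_i|\le Q$) but does not help, since the conditional second moment still integrates over the big-jump continuations. The inequality you are after is true, but repairing this step is the actual substance of the discrete-time proof: one needs Davis's decomposition of the differences into a recentered small-jump martingale plus a big-jump part, or Burkholder--Gundy's original argument --- an additional idea, not bookkeeping.

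Since the lemma is only needed for $p>2$, you could bypass the good-$\lambda$ machinery entirely with an elementary argument valid for all $p\ge 2$. Let $\phi(x)=|x|^p$, so that $\phi''(x)=p(p-1)|x|^{p-2}$ is nondecreasing in $|x|$. Taylor's theorem (with $\xi_k$ between $S_{k-1}$ and $S_k$, so $|\xi_k|\le S^*$) and telescoping give
\[
|S_n|^p=\sum_{k=1}^n\Big(\phi'(S_{k-1})X_k+\tfrac{1}{2}\phi''(\xi_k)X_k^2\Big)\le\sum_{k=1}^n\phi'(S_{k-1})X_k+\tfrac{p(p-1)}{2}\,(S^*)^{p-2}Q^2 .
\]
The first sum has zero expectation, because $\phi'(S_{k-1})\in L^{p/(p-1)}$, $X_k\in L^p$ and $\E[X_k\mid\mathcal{F}_{k-1}]=0$; H\"older with exponents $p/(p-2)$ and $p/2$ bounds the expectation of the second term by $\tfrac{p(p-1)}{2}\|S^*\|_p^{p-2}\|Q\|_p^2$; and Doob's maximal inequality gives $\|S^*\|_p\le\tfrac{p}{p-1}\|S_n\|_p$. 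Combining these and dividing by $\|S^*\|_p^{p-2}$ (finite by the a priori bound above, and the claim is trivial if it vanishes) yields $\|S_n\|_p\le\|S^*\|_p\le C_p\|Q\|_p$. This is complete, self-contained, and matches the generality of the statement, whereas making your good-$\lambda$ route rigorous would require importing the Davis decomposition.
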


\begin{Lem}
Let $p>2$,
and let $Y_i$'s be random variables such that $\E |Y_i|^p<\infty$. Then
\[
\left\|\left(\sum_{i=1}^n Y_i^2\right)^{1/2}\right\|_p\le
\left(\sum_{i=1}^n \|Y_i\|_p^2\right)^{1/2}.
\]
\end{Lem}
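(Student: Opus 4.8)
The plan is to square the left-hand side, recognize the result as an $L^{p/2}(\Omega)$-norm, and then invoke Minkowski's inequality, which is available precisely because $p>2$ forces $p/2>1$.

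First I would rewrite the square of the left-hand side. Since $\sum_{i=1}^n Y_i^2\ge 0$, we have
\[
\left\|\left(\sum_{i=1}^n Y_i^2\right)^{1/2}\right\|_p^2
=\left(\E\left[\left(\sum_{i=1}^n Y_i^2\right)^{p/2}\right]\right)^{2/p}
=\left\|\sum_{i=1}^n Y_i^2\right\|_{p/2},
\]
the last expression being the $L^{p/2}(\Omega)$-norm of the nonnegative random variable $\sum_{i=1}^n Y_i^2$. This reindexing is the crux: raising the sum of squares to the power $1/2$ inside an $L^p$-norm is the same as taking an $L^{p/2}$-norm of the sum of squares itself.

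Next, because $p>2$ we have $p/2>1$, so $\|\cdot\|_{p/2}$ is a genuine norm and the triangle (Minkowski) inequality gives
\[
\left\|\sum_{i=1}^n Y_i^2\right\|_{p/2}\le\sum_{i=1}^n\left\|Y_i^2\right\|_{p/2}.
\]
I would then simplify each summand using $\|Y_i^2\|_{p/2}=(\E|Y_i|^p)^{2/p}=\|Y_i\|_p^2$, so that chaining the two displays yields $\big\|(\sum_i Y_i^2)^{1/2}\big\|_p^2\le\sum_i\|Y_i\|_p^2$; taking square roots is exactly the claim.

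There is no substantive obstacle here: the whole content is Minkowski's inequality applied after the squaring/reindexing step. The one point that genuinely needs checking—and the sole place the hypothesis $p>2$ enters—is that $p/2>1$, which is what guarantees the triangle inequality for $\|\cdot\|_{p/2}$; for $p/2\le 1$ it would fail in general (indeed reverse for nonnegative summands), so the restriction $p>2$ is essential rather than cosmetic.
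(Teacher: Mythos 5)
Your proof is correct and follows essentially the same route as the paper's: square the norm to recognize it as the $L^{p/2}(\Omega)$-norm of $\sum_i Y_i^2$, apply Minkowski's inequality (valid since $p/2>1$), and identify $\|Y_i^2\|_{p/2}=\|Y_i\|_p^2$. Your additional remark pinpointing where the hypothesis $p>2$ is used is a nice clarification, but the argument itself is identical to the paper's.
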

\begin{proof}
By   Minkowski's inequality,
\begin{align*}
\left\|\left(\sum_{i=1}^n Y_i^2\right)^{1/2}\right\|_p^2=
\left[\E\left(\sum_i Y_i^2\right)^{p/2}\right]^{2/p}=\left\|\sum_{i=1}^n Y_i^2\right\|_{p/2}
\le  \sum_{i=1}^n \| Y_i^2 \|_{p/2}= \sum_{i=1}^n (\E |Y_i|^p)^{2/p}=\sum_{i=1}^n \| Y_i\|_p^2.
\end{align*}

\end{proof}

The following result is used in Lemma \ref{Lem:hypercontract}.
\begin{Thm} If $p>2$,
and $\E |\epsilon_{i}^{(j)}|^p<\infty$, then
\[
\|T_n^k(a)\|_p \le c\|T_n^k(a)\|_2
\]
where $c$ is a constant that does not depend on $a(\cdot)$ nor on $n$.
\end{Thm}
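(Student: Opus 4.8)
The plan is to argue by induction on the order $k$, exploiting the martingale-difference decomposition (\ref{eq:mart diff}) together with the two lemmas stated just above (Burkholder's square-function inequality and the Minkowski-type estimate). For the base case $k=1$ (the case $k=0$ being a trivial constant), the summands $a(i)\epsilon_i^{(1)}$ are independent mean-zero random variables, hence martingale differences. Applying the Burkholder inequality and then the preceding lemma bounds $\|T_n^1(a)\|_p$ by $\big(\sum_i \|a(i)\epsilon_i^{(1)}\|_p^2\big)^{1/2}$. Since the noise is i.i.d., this equals $(\E|\epsilon^{(1)}|^p)^{1/p}\big(\sum_i a(i)^2\big)^{1/2}$, whereas orthogonality gives $\|T_n^1(a)\|_2=(\E|\epsilon^{(1)}|^2)^{1/2}\big(\sum_i a(i)^2\big)^{1/2}$. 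The ratio of these two moments supplies the constant $c$, which is manifestly independent of $a(\cdot)$ and of $n$.

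For the inductive step I would assume the estimate at order $k-1$ with a constant $c_{k-1}$ depending only on $p$, on $k-1$, and on the moments of the noise, but \emph{not} on the coefficient array nor on the upper summation limit. Starting from (\ref{eq:mart diff}), the increments $X_i:=T_{i-1}^{k-1}(a)\epsilon_i^{(k)}$ are martingale differences with respect to $\{\mathcal{F}_i\}$, because $T_{i-1}^{k-1}(a)$ is $\mathcal{F}_{i-1}$-measurable while $\epsilon_i^{(k)}$ is independent of $\mathcal{F}_{i-1}$ and mean zero. Burkholder's inequality followed by the Minkowski-type lemma then yields
\[
\|T_n^k(a)\|_p \le C_p \Big(\sum_{i=k}^n \|X_i\|_p^2\Big)^{1/2}.
\]

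The next step is to factor each norm. Since $T_{i-1}^{k-1}(a)$ depends only on the noise up to index $i-1$, it is independent of $\epsilon_i^{(k)}$, so $\|X_i\|_p=\|T_{i-1}^{k-1}(a)\|_p\,(\E|\epsilon^{(k)}|^p)^{1/p}$. Invoking the inductive hypothesis on the $(k-1)$-th order form $T_{i-1}^{k-1}(a)$, whose coefficient is the slice $(i_1,\ldots,i_{k-1})\mapsto a(i_1,\ldots,i_{k-1},i)$ and whose horizon is $i-1$, replaces $\|T_{i-1}^{k-1}(a)\|_p$ by $c_{k-1}\|T_{i-1}^{k-1}(a)\|_2$. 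The same independence together with the $L^2$-orthogonality of the martingale differences gives $\|T_n^k(a)\|_2^2=(\E|\epsilon^{(k)}|^2)\sum_{i=k}^n \|T_{i-1}^{k-1}(a)\|_2^2$, so that $\sum_{i=k}^n\|T_{i-1}^{k-1}(a)\|_2^2$ is exactly $\|T_n^k(a)\|_2^2/\E|\epsilon^{(k)}|^2$. Substituting these identities collapses the right-hand side to a constant multiple of $\|T_n^k(a)\|_2$, closing the induction with $c_k=C_p\,c_{k-1}\,(\E|\epsilon^{(k)}|^p)^{1/p}/(\E|\epsilon^{(k)}|^2)^{1/2}$.

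The step I expect to require the most care is not any single calculation but the bookkeeping that keeps the constant uniform. The inductive hypothesis must be stated and applied with a constant independent of both the coefficient array and the truncation level, since in the argument above it is invoked simultaneously for the whole family $\{T_{i-1}^{k-1}(a)\}_{k\le i\le n}$, each member having a different horizon $i-1$ and a different coefficient slice. The independence of $\epsilon_i^{(k)}$ from $\mathcal{F}_{i-1}$ is precisely what makes the martingale-difference property and the clean factorization of the $L^p$ and $L^2$ norms hold at once; once that is secured, the two lemmas and the $L^2$-orthogonality furnish the rest.
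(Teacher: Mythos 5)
Your proposal is correct and follows essentially the same route as the paper's own proof: induction on $k$ via the martingale-difference decomposition (\ref{eq:mart diff}), Burkholder's inequality, the Minkowski-type lemma, the independence factorization of $\|T_{i-1}^{k-1}(a)\epsilon_i^{(k)}\|_p$, and the $L^2$-orthogonality identity $\|T_n^k(a)\|_2^2=\sum_{i=k}^n\|T_{i-1}^{k-1}(a)\|_2^2\,\|\epsilon_i^{(k)}\|_2^2$. The only cosmetic difference is that you start the induction at $k=1$ with an explicit computation while the paper starts at the trivial case $k=0$, and you track the constant $c_k$ explicitly, which the paper leaves implicit.
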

\begin{proof}
We prove it by induction. The case $k=0$ is trivial since $T_n^0(a)$  is a constant.  Suppose that the inequality holds for $k-1$, where $k\ge 1$. Then by the forgoing lemmas,
\begin{align*}
\|T_n^k(a)\|_p 
&\le C_p \left\|\left(\sum_{i=k}^n |T_{i-1}^{k-1}(a) \epsilon_i^{(k)}|^2\right)^{1/2}\right\|_p\\
&\le C_p \left(\sum_{i=k}^n \|T_{i-1}^{k-1}(a)\epsilon_i^{(k)}\|_p^2\right)^{1/2}\\
&= C_p \left(\sum_{i=k}^n \|T_{i-1}^{k-1}(a)\|_p^2 \|\epsilon_i^{(k)}\|_p^2\right)^{1/2},
\end{align*}
by independence between $T_{i-1}^{k-1}(a)$ and $\epsilon_i^{(k)}$.
By the induction assumption, $\|T_{i-1}^{k-1}(a)\|_p^2\le c_1\|T_{i-1}^{k-1}(a)\|_2^2 $ for some $c_1>0$ which does not depend on $a(\cdot)$ or $n$. In addition,  trivially since the random vectors $\{\mathbd{\epsilon}_i, i\in \mathbb{Z}\}$ are identically distributed, one has $\|\epsilon_i^{(k)}\|_p^2\le c_2 \|\epsilon_i^{(k)}\|_2^2 $ for some $c_2>0$ which does not depend on $a(\cdot)$, $n$ or $i$. The desired result is  then immediate once noting that
\[
\|T_n^k(a)\|_2^2=\sum_{i=k}^n \|T_{i-1}^{k-1}(a)\|_2^2 \|\epsilon_{i}^{(k)}\|_2^2
\]
due to the off-diagonality of $T_n^k(a)$ and independence.
\end{proof}

\noindent\textbf{Acknowledgments.} We would like to thank the referees for their comments. This work was partially supported by the NSF grant DMS-1007616 and DMS-1309009 at Boston University.

\bibliographystyle{plainnat}

\medskip
\noindent Shuyang Bai~~~~~~~ \textit{bsy9142@bu.edu}\\
Murad S. Taqqu ~~\textit{murad@bu.edu}\\
Department of Mathematics and Statistics\\
111 Cumminton Street\\
Boston, MA, 02215, US
\end{document}